\numberwithin{equation}{section}
\numberwithin{figure}{section}
\def\R{\mathbb{R}}
\def\Z{\mathbb{Z}}
\def\Gam{\Gamma}
\def\lam{\lambda}
\def\Lam{\Lambda}
\def\Om{\Omega}
\def\1{\mathds{1}}
\renewcommand\le{\leqslant}
\renewcommand\ge{\geqslant}
\renewcommand\leq{\leqslant}
\renewcommand\geq{\geqslant}
\renewcommand\hat{\widehat}
\newcommand{\ft}[1]{\widehat #1}
\newcommand{\dotprod}[2]{\langle #1 , #2 \rangle}
\newcommand{\mes}{\operatorname{mes}}
\newcommand{\dist}{\operatorname{dist}}
\newcommand{\zeros}{\mathcal{Z}}
\newcommand{\zft}[1]{\mathcal{Z}(\ft{\1}_{#1})}
\newcommand{\sqft}[1]{|{#1}|^{-2} \, |\ft{\1}_{#1}|^2}
\theoremstyle{plain}
\newtheorem{thm}{Theorem}[section]
\newtheorem{lem}[thm]{Lemma}
\newtheorem{corollary}[thm]{Corollary}
\newtheorem{problem}[thm]{Problem}
\newtheorem{conj}[thm]{Conjecture}
\newtheorem*{claim*}{Claim}
\newcommand{\thmref}[1]{Theorem~\ref{#1}}
\newcommand{\secref}[1]{Section~\ref{#1}}
\newcommand{\lemref}[1]{Lemma~\ref{#1}}
\newcommand{\defref}[1]{Definition~\ref{#1}}
\newcommand{\probref}[1]{Problem~\ref{#1}}
\newcommand{\corref}[1]{Corollary~\ref{#1}}
\newcommand{\conjref}[1]{Conjecture~\ref{#1}}
\theoremstyle{definition}
\newtheorem{definition}[thm]{Definition}
\newtheorem*{definition*}{Definition}
\newtheorem*{remarks*}{Remarks}
\newtheorem*{remark*}{Remark}
\newtheorem{remark}[thm]{Remark}
\newenvironment{enumerate-math}
{\begin{enumerate}
\addtolength{\itemsep}{5pt}
}
{\end{enumerate}}
\newenvironment{enumerate-text}
{\begin{enumerate}
\addtolength{\itemsep}{5pt}
}
{\end{enumerate}}
\begin{document}

 \title[Spectrality of product domains and Fuglede's conjecture]
{Spectrality of product domains and Fuglede's conjecture for convex polytopes}

\author{Rachel Greenfeld}
\address{Department of Mathematics, Bar-Ilan University, Ramat-Gan 5290002, Israel}
\email{rachelgrinf@gmail.com}
\author{Nir Lev}
\address{Department of Mathematics, Bar-Ilan University, Ramat-Gan 5290002, Israel}
\email{levnir@math.biu.ac.il}

\thanks{Research supported by ISF grant No.\ 227/17 and ERC Starting Grant No.\ 713927.}
\subjclass[2010]{42B10, 52C22}
\date{July 2, 2018}

\keywords{Fuglede's conjecture, spectral set, tiling, convex polytope}

\begin{abstract}
	A set $\Omega \subset \R^d$ is said to be spectral if the
	space $L^2(\Omega)$ has an orthogonal basis of exponential functions.
	 It is well-known that in many respects, spectral sets
	``behave like'' sets which can tile the space by translations.
	This suggests a conjecture that a product set $\Omega = A \times B$
	is spectral if and only if the factors $A$ and $B$ are both spectral sets.
	We recently proved  this in the case when $A$ is an interval in dimension one.
	The main result of the present paper is	
	that the conjecture is true also when $A$ is a convex polygon in two 
	dimensions. We discuss this result in connection with the conjecture that a convex 
	polytope $\Omega$ is  spectral  if and only if it can tile  by translations.
\end{abstract}

\maketitle


\section{Introduction} \label{secI1}

\subsection{}
Let $\Omega\subset \R^d$ be a bounded, measurable set of positive Lebesgue measure. 
It is said to be  \emph{spectral} if there exists a countable set $\Lambda\subset \R^d$
 such that the system of exponential functions
\begin{equation}
	\label{eqI1.1}
	E(\Lambda)=\{e_\lambda\}_{\lambda\in \Lambda}, \quad e_\lambda(x)=e^{2\pi
	i\dotprod{\lambda}{x}},
\end{equation}
is orthogonal and complete in  $L^2(\Omega)$,
  that is, the system is an orthogonal basis for the space.
Such a set  $\Lambda$ is called a \emph{spectrum} for $\Omega$.

The classical example of a spectral set is the unit cube $\Omega = \left[-\frac1{2}, \frac1{2}\right]^d$,
 for which  the set $\Lam = \Z^d$ serves as a spectrum.

Which other sets $\Omega$ are spectral? The study of this question was initiated by Fuglede in 1974
\cite{Fug74}, and it is known as \emph{Fuglede's spectral set problem}. 

The research on spectral sets has been motivated for many years by an observation due to  Fuglede, that the notion of spectrality is related to another, geometrical notion -- the tiling by translations. We say that $\Omega$ \emph{tiles} the space by translations along a countable set $\Lambda\subset \R^d$ if the collection of sets
$\{\Omega + \lam\}$, $\lam \in \Lam$, constitutes a partition of $\R^d$ up to measure zero.

With time, it became apparent that in many respects, spectral sets ``behave like'' sets which can tile the space by translations.
It was observed that many results about spectral sets have analogous results for sets which can tile, and vice versa.
However the precise connection between the notions of spectrality and tiling, is still not clear.

\subsection{}
 One of the interesting open problems in the subject is Fuglede's conjecture for convex bodies, which states that a convex body  $\Omega\subset \R^d$ is spectral if and only if it can tile the space by translations (originally this conjecture was stated in \cite{Fug74} for general, not necessarily convex sets  $\Omega$, but it turned out that in this generality the conjecture is not true; see \cite[Section 4]{KM10} and the references therein).

It has long been known that  a convex body   $\Omega$ which can tile by translations must be a polytope, and that it is a spectral set (see, for example, \cite[Section 3.5]{Kol04}). Much less is known, however, about the converse assertion. It was proved by Iosevich, Katz and Tao \cite{IKT03} that if a  convex polygon   $\Omega \subset \R^2$ is a spectral set,  then it must be either a parallelogram or a centrally symmetric hexagon, and hence it tiles by translations. Recently, we proved \cite{GL16, GL17} that Fuglede's conjecture is true also  for convex polytopes in dimension $d=3$. That is, if a convex polytope $\Omega \subset \R^3$   is  spectral, then it can tile by translations.

\subsection{}
 One of the difficulties in proving Fuglede's conjecture for convex polytopes in dimensions $d \geq 3$, is concerned with the existence of polytopes  $\Omega \subset \R^d$ which can be mapped by an invertible affine transformation to a cartesian product $A \times B$ of two convex polytopes $A \subset \R^n$,  $B \subset \R^m$  $(n,m \geq 1)$ where $n+m=d$. A convex polytope $\Omega$ with this property is said to be \emph{directly decomposable},  see \cite[Section~3.3.2]{Sch14}. For brevity, in this paper we will omit the word ``directly'', and just say that  $\Omega$  is \emph{decomposable}.

 For example, one can easily verify  that a two-dimensional convex polygon is decomposable if and only if it is a parallelogram, while in three dimensions a convex polytope is decomposable if and only if it is a prism.

The proof that a spectral convex polytope $\Omega$ in $\R^2$   or in $\R^3$    can necessarily  tile by translations, was based on the fact that if such an $\Omega$ is indecomposable, then it has a unique spectrum up to translation, see \cite[Theorems 1.3 and 1.4]{GL17}. The latter fact is no longer true if  $\Omega$  is  decomposable. Nevertheless, in two dimensions the situation when $\Omega$  is decomposable does not present any difficulty, as in this case $\Omega$  is a parallelogram and so it automatically tiles by translations. However, in dimensions $d \geq 3$,  decomposable polytopes do not necessarily tile. For this reason, the case when $\Omega$   is a prism in $\R^3$ required a different approach in our result, see \cite{GL16}.

\subsection{}
The study of decomposable spectral convex polytopes leads to the following, more general problem. Let $\Omega = A \times B$  be the cartesian product of two bounded, measurable sets  $A \subset \R^n$,  $B \subset \R^m$. When is $\Omega$  spectral? This question was posed in \cite{Kol16}. 

The answer is conjectured to be the following:

\begin{conj}
	\label{conjA1.1}
	Let $A \subset \R^n$ and $B \subset \R^m$   be two
	bounded, measurable sets. Then their product  $\Omega  = A \times B$
	is spectral if and only if $A$ and $B$ are both spectral sets.
\end{conj}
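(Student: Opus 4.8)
The plan is to prove the two implications separately, since the forward one is elementary while the converse carries essentially all the difficulty and is the reason the statement is only a conjecture.

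\emph{Forward direction.} Suppose $A$ and $B$ are spectral, with spectra $\Lam_A \subset \R^n$ and $\Lam_B \subset \R^m$. I claim that $\Lam := \Lam_A \times \Lam_B$ is a spectrum for $\Om = A \times B$. Since $\ft{\1}_{\Om}(\xi,\eta) = \ft{\1}_A(\xi)\,\ft{\1}_B(\eta)$, the orthogonality factorizes: for distinct $\lam = (\alpha,\beta)$ and $\lam' = (\alpha',\beta')$ in $\Lam$ one has $\dotprod{e_\lam}{e_{\lam'}}_{L^2(\Om)} = \dotprod{e_\alpha}{e_{\alpha'}}_{L^2(A)}\,\dotprod{e_\beta}{e_{\beta'}}_{L^2(B)}$, and since $\alpha \neq \alpha'$ or $\beta \neq \beta'$, at least one factor vanishes. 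Completeness is then immediate from the canonical isomorphism $L^2(\Om) \cong L^2(A) \otimes L^2(B)$, under which $e_{(\alpha,\beta)}$ corresponds to $e_\alpha \otimes e_\beta$: a tensor product of two orthogonal bases is an orthogonal basis of the tensor product space. This direction needs no hypothesis on $A$ or $B$ beyond their spectrality.

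\emph{Converse direction.} Suppose now that $\Lam$ is a spectrum for $\Om = A \times B$. The naive idea of projecting $\Lam$ onto the two coordinate blocks fails, because $\Lam$ need not be a product set. Instead I would work through the well-known Fourier-analytic characterization: a countable set $\Lam$ is a spectrum for $\Om$ if and only if
\[
\sum_{\lam \in \Lam} \bigl| \ft{\1}_{\Om}(\xi - \lam) \bigr|^2 = |\Om|^2 \quad \text{for a.e. } \xi \in \R^{n+m},
\]
and in particular the orthogonality forces $(\Lam - \Lam) \setminus \{0\} \subset \zeros(\ft{\1}_{\Om})$. Because $\ft{\1}_{\Om} = \ft{\1}_A \otimes \ft{\1}_B$ factorizes, the zero set splits into two slabs,
\[
\zeros(\ft{\1}_{\Om}) = \bigl( \zft{A} \times \R^{m} \bigr) \cup \bigl( \R^{n} \times \zft{B} \bigr),
\]
so that every nonzero difference of spectrum points has either its $A$-component in $\zft{A}$ or its $B$-component in $\zft{B}$. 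The plan is to exploit this dichotomy together with the tiling identity above to decouple the two coordinate blocks, and thereby to manufacture, after a suitable regrouping of $\Lam$, a spectrum for $A$ and a spectrum for $B$.

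\emph{Main obstacle.} The decisive difficulty is precisely this decoupling. Points of $\Lam$ may differ simultaneously in both coordinate blocks, and the two slabs in $\zeros(\ft{\1}_{\Om})$ overlap, so one cannot simply read off independent spectra for the factors; the product structure of $\zeros(\ft{\1}_{\Om})$ does not by itself impose a product (or even regrouped-product) structure on $\Lam$. This is exactly the point at which genuine geometric input on a factor is required: when $A$ is an interval, or a convex polygon, the rigidity of the zero set $\zft{A}$ constrains the admissible spectra strongly enough to force the disentanglement, whereas for arbitrary measurable $A$ and $B$ the required structural control is not available --- which is why the full converse remains open and the assertion is stated as a conjecture.
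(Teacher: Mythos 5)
Your proposal matches the paper: the ``if'' direction is proved exactly as the paper does (a product $U \times V$ of spectra for $A$ and $B$ is a spectrum for $\Om = A \times B$, via the factorization $\ft{\1}_\Om = \ft{\1}_A \otimes \ft{\1}_B$ and the tensor-product basis), and your diagnosis of the ``only if'' direction --- that the spectrum $\Lam$ need not have product structure while the zero set only splits as $(\zft{A} \times \R^m) \cup (\R^n \times \zft{B})$, so the two blocks cannot be decoupled without geometric rigidity of a factor --- is precisely the obstruction the paper itself identifies, and is why the statement is a conjecture rather than a theorem. Since the converse is genuinely open (the paper establishes it only in special cases, e.g.\ when $A$ is an interval or a convex polygon), your decision not to claim a full proof is the correct one.
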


The ``if'' part of this conjecture is obvious. Indeed, if $U \subset \R^n$  is a spectrum for $A$,  
and $V \subset \R^m$  a spectrum for $B$,  then the
 product $\Lam = U \times V$  is a spectrum for $\Omega  = A \times B$ (see, for example, \cite{JP99}).
However the converse, ``only if'' part of the conjecture, is non-trivial. 
The difficulty lies in that we assume the product set $\Omega$ to be spectral, but
 we do not know that the spectrum $\Lambda$ also has a product structure. 
So it is not obvious which sets $U$ and $V$ may serve  as spectra for the factors $A$ and $B$, respectively.

One reason to expect that \conjref{conjA1.1} should be true is the fact that the analogous assertion for tiling by translations is known to hold. Indeed, it was observed in \cite[Section~1.2]{Kol16} that the product set  $\Omega  = A \times B$ can tile the space $\R^n \times \R^m$ by translations if and only if both $A$ tiles $\R^n$ and $B$ tiles $\R^m$. So the analogy between  spectrality and  tiling suggests  that \conjref{conjA1.1} should be true as well.

\subsection{}
In \cite{GL16} we proved the first result in the direction of \conjref{conjA1.1}: 

\begin{thm}[\cite{GL16}]
\label{thmA1.0}
Let $\Omega=A\times B$ where $A$ is an interval in $\R$,
	and $B$ is a bounded, measurable set in $\R^m$.
	Then $\Omega$ is spectral  if and only if $B$ is a spectral set.
\end{thm}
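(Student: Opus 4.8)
The ``if'' direction is immediate: an interval $A \subset \R$ is always spectral, with spectrum a translate of $|A|^{-1}\Z$, so if $V$ is a spectrum for $B$ then the product is a spectrum for $\Om$. The plan is therefore to concentrate on the ``only if'' direction. After an affine normalization I may assume $A = [0,1]$, so that $\widehat{\1_A}(t) = \int_0^1 e^{-2\pi i t x}\,dx$ vanishes exactly on $\Z \setminus \{0\}$ while $\widehat{\1_A}(0) = 1$. Writing a point of $\R \times \R^m$ as $(\xi,\eta)$ and using $\widehat{\1_\Om}(\xi,\eta) = \widehat{\1_A}(\xi)\,\widehat{\1_B}(\eta)$, the orthogonality of $E(\Lam)$ for a spectrum $\Lam$ of $\Om$ translates into the dichotomy: for any two distinct $(\xi,\eta),(\xi',\eta') \in \Lam$, either $\xi - \xi' \in \Z \setminus \{0\}$, or $\widehat{\1_B}(\eta - \eta') = 0$.

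Next I would analyze the fibers. For $\eta \in \R^m$ put $\Lam^\eta = \{\xi : (\xi,\eta) \in \Lam\}$; applying the dichotomy to two points of a single fiber (where $\eta - \eta' = 0$ and $\widehat{\1_B}(0) = |B| \neq 0$) shows that the elements of $\Lam^\eta$ all lie in one coset $r_\eta + \Z$ and are pairwise distinct, so a slab of unit width in the first variable meets each fiber at most once. I then define
$$V = \bigl\{ \eta \in \R^m : (\xi,\eta) \in \Lam \ \text{for some}\ \xi \in [0,1) \bigr\},$$
the set of second coordinates of the points of $\Lam$ lying in the slab $[0,1) \times \R^m$. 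This $V$ is an orthogonal set for $B$: if $\eta \neq \eta'$ belong to $V$, choose $(\xi,\eta),(\xi',\eta') \in \Lam$ with $\xi,\xi' \in [0,1)$; then $\xi - \xi' \in (-1,1)$ cannot be a nonzero integer, so the dichotomy forces $\widehat{\1_B}(\eta - \eta') = 0$. This part is straightforward, and the same argument produces an orthogonal section from any unit slab.

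The crux is completeness of $E(V)$ in $L^2(B)$. Here I would invoke the standard characterization that $\Lam$ is a spectrum for $\Om$ precisely when $\sum_{(\xi,\eta) \in \Lam} |\widehat{\1_A}(\zeta - \xi)|^2 \, |\widehat{\1_B}(\omega - \eta)|^2 \equiv |B|^2$ (recall $|A| = 1$). Integrating this identity in $\zeta$ over a period $[0,1]$ and interchanging sum and integral (Tonelli) yields a weighted identity $\sum_\eta c_\eta\,|\widehat{\1_B}(\omega - \eta)|^2 = |B|^2$, the sum being over the distinct fibers with $0 < c_\eta \le 1$, where $c_\eta = 1$ exactly when the fiber $\Lam^\eta$ fills its entire coset (using $\int_\R |\widehat{\1_A}|^2 = |A| = 1$ and $\sum_n |\widehat{\1_A}(t - n)|^2 = 1$). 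To conclude that $\sum_{\eta \in V} |\widehat{\1_B}(\omega - \eta)|^2 = |B|^2$, i.e.\ that the Bessel bound coming from orthogonality of $V$ is attained, I would pair this with a density argument: since $\Lam$ is a spectrum it is a uniformly discrete set of density $|\Om| = |B|$, whence its unit-slab section $V$ has density $|B|$ in $\R^m$, and an orthogonal system of exponentials at the critical density is complete (Landau).

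I expect this last step -- upgrading the orthogonal system $E(V)$ to a complete one -- to be the main obstacle. The difficulty is exactly that a spectrum of a product need not itself be a product: the fibers may be incomplete or carry varying fractional parts $r_\eta$, so the weights $c_\eta$, and the passage from the weighted identity to the unweighted completeness of $V$, must be handled with care. Concretely, one must rule out ``wasted'' frequencies and show that the fibers contributing to $V$ carry the full spectral mass $|B|^2$; this is where the precise location of the zeros of $\widehat{\1_A}$ and the exact density $|B|$ of $\Lam$ enter decisively, and where a purely formal manipulation of the tiling identity does not by itself suffice.
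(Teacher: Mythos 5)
Your reduction is sound up to the final step, and in outline it shadows the paper's own route: the paper obtains \thmref{thmA1.0} from \thmref{thmC4.3} (via \corref{corD4.3.1}), taking $W$ to be an interval of length $|W|=|A|^{-1}$, which is an orthogonal packing region for the interval $A$ since $\Delta(W)=W-W$ is disjoint from $\zft{A}=\Z\setminus\{0\}$ after normalization. Your fiber/coset analysis is the paper's Step~1 (injectivity of $p_2$ and orthogonality of $E(\Gamma(\Lam,W+x))$ in $L^2(B)$ for a.e.\ $x$), your slab section $V$ is the cut-and-project set $\Gamma(\Lam,W)$ of \defref{defD4.1}, and your integration of the tiling identity in $\zeta$ is essentially the mass comparison of Step~4. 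Up to that point everything checks.

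The genuine gap is exactly where you located it, and the tool you reach for there does not exist. The principle ``an orthogonal system of exponentials at the critical density is complete (Landau)'' is false: for $B=[0,1]$ the set $V=\Z\setminus\{0\}$ is orthogonal in $L^2(B)$ and has uniform density $1=|B|$, yet $E(V)$ is incomplete, since $e_0$ is orthogonal to every $e_n$, $n\neq 0$. Landau's theorem gives only necessary density bounds; it never upgrades orthogonality at critical density to completeness. Moreover, your transfer of density from $\Lam$ to the fixed slab $[0,1)\times\R^m$ is unjustified -- the density of $\Lam$ controls counts in large cubes, not in a slab of unit width, and a single fixed slab can indeed be exceptional, which is why the paper's conclusion (\thmref{thmC4.4}) is asserted only for a.e.\ translate $W+x$. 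The paper closes this hole with no density input at all: orthogonality of the sections gives, by Fubini, that $(\1_{-W}\otimes f_B)+\Lam$ is a packing, where $f_B=\sqft{B}$, while $(f_A\otimes f_B)+\Lam$ is a tiling because $\Lam$ is a spectrum; comparing total integrals yields $|W|\leq|A|^{-1}$, hence $|W|=|A|^{-1}$, and then the equality case of \lemref{lemC1.6} (a short convolution argument with $\delta_\Lam$) forces the packing to actually be a tiling. A second application of Fubini converts that tiling into the statement that $f_B+\Gamma(\Lam,W+x)$ is a tiling for a.e.\ $x$, i.e.\ $\Gamma(\Lam,W+x)$ is a spectrum for $B$ by \lemref{lemC1.7}. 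Replacing your Landau appeal by this equality-case/Fubini argument, and your fixed slab by a generic translate, is precisely what is needed to complete your proof.
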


This result implies that \conjref{conjA1.1} is true whenever $A$ is a parallelepiped in $\R^n$.
Indeed, due to the invariance under affine transformations, it is enough to consider the case when
$A$ is the $n$-dimensional unit cube $\left[-\frac1{2}, \frac1{2}\right]^n$,
and the conclusion then follows from \thmref{thmA1.0} by induction on $n$.

\thmref{thmA1.0} played an important role in the proof of  Fuglede's conjecture for three-dimensional 
convex polytopes. It allowed us to use ``dimension reduction'' in order to resolve 
the case when $\Omega$ is decomposable -- that is, when $\Omega$ is a prism in $\R^3$. Indeed,
in this case we could assume, by applying an affine transformation, that
$\Omega$ is the cartesian product $A\times B$ of an interval $A \subset \R$,
 and  a convex polygon $B \subset \R^2$ (the polygon $B$ constitutes the \emph{base} of the prism).
By \thmref{thmA1.0}, the spectrality of $\Omega$ implies that $B$ must also be spectral,
 and  we could then invoke the two-dimensional result of \cite{IKT03} to conclude that $B$, and hence also $\Omega$,  tiles by translations
(see \cite[Section 9]{GL17}).

Kolountzakis found in \cite{Kol16} another proof of \thmref{thmA1.0},
 different from the one in \cite{GL16}. His approach moreover allowed him to establish that
\conjref{conjA1.1} is true also in  the case when the set $A$ is the union of two intervals in $\R$. 

\subsection{}
An important  special case of \conjref{conjA1.1}
is when the two sets $A, B$ are assumed to be convex polytopes.
A proof of the conjecture in this special case  
amounts to showing that the spectrality of a  decomposable  
convex polytope $\Omega$ can be characterized by 
 the spectrality of the factors  in the decomposition. Such a result
would reduce the proof of Fuglede's conjecture for convex polytopes
to the case when $\Omega$ is  indecomposable.

In this paper, our  main focus will be on the situation when $A$ is a convex polytope in $\R^n$,
while $B$ is an arbitrary bounded, measurable set in $\R^m$.


\section{Results}

\subsection{}
Our first result is concerned with necessary conditions for the spectrality of convex polytopes in $\R^n$.  By a result due to Kolountzakis \cite{Kol00a}, if a convex polytope $A \subset \R^n$ is spectral, then $A$ must be centrally symmetric. We proved in \cite{GL17} that also the central symmetry of all the facets of $A$ is a necessary condition for its spectrality.

The following theorem supports \conjref{conjA1.1} by showing that these conditions are necessary also for the spectrality of the product set $A\times B$.

\begin{thm}
	\label{thmC5.4}
	Let $\Omega=A\times B$ where $A$ is a convex polytope in $\R^n$, 
	and $B$ is a bounded, measurable set in $\R^m$.
	If $\Omega$ is a spectral set, then $A$ must be centrally symmetric and have centrally symmetric facets.
\end{thm}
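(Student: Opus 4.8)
The plan is to reduce the statement to a ``weak tiling'' relation for the single factor $A$, and then to feed this relation into the asymptotic arguments that already underlie the known necessary conditions for the spectrality of a convex polytope. Write a spectrum of $\Om = A\times B$ as $\Lam \subset \R^n\times\R^m$, and split each $\lambda\in\Lam$ as $\lambda = (\lambda',\lambda'')$ with $\lambda'\in\R^n$, $\lambda''\in\R^m$. Since $E(\Lam)$ is, after normalization, an orthonormal basis of $L^2(\Om)$, completeness is equivalent to the Parseval identity
\begin{equation*}
\sum_{\lambda\in\Lam}|\hat\1_\Om(\xi-\lambda)|^2 = |\Om|^2,\qquad \xi\in\R^n\times\R^m,
\end{equation*}
which in fact holds for every $\xi$, since both sides are continuous (the sum converges locally uniformly because $\Lam$ is uniformly discrete and $\hat\1_\Om$ decays). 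Using $\hat\1_\Om(\xi',\xi'') = \hat\1_A(\xi')\,\hat\1_B(\xi'')$, this reads
\begin{equation*}
\sum_{\lambda\in\Lam}|\hat\1_A(\xi'-\lambda')|^2\,|\hat\1_B(\xi''-\lambda'')|^2 = |A|^2|B|^2 .
\end{equation*}

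Next I would average this identity over the second variable. Integrating in $\xi''$ over a large cube $Q_R\subset\R^m$ and using Plancherel for $B$, that is $\int_{\R^m}|\hat\1_B|^2 = |B|$, the inner integral $\int_{Q_R}|\hat\1_B(\xi''-\lambda'')|^2\,d\xi''$ is close to $|B|$ when $\lambda''$ lies well inside $Q_R$ and close to $0$ when $\lambda''$ lies well outside. Because a spectrum is uniformly discrete and has uniform density equal to $|\Om|$, the points of $\Lam$ whose second coordinate falls in the boundary layer of $Q_R$ contribute only a lower-order term. Dividing by $(2R)^m$ and letting $R\to\infty$ along a suitable subsequence, the normalized projected measures $\sigma_R := (2R)^{-m}\sum_{\lambda\in\Lam:\,\lambda''\in Q_R}\delta_{\lambda'}$ converge weakly to a positive measure $\sigma$ on $\R^n$ of bounded density, and one obtains
\begin{equation*}
\int_{\R^n}|\hat\1_A(\xi'-u)|^2\,d\sigma(u) = |A|^2|B|,\qquad \xi'\in\R^n .
\end{equation*}
Thus $|\hat\1_A|^2$ tiles $\R^n$ at a constant level by the bounded-density translation measure $\sigma$.

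The final step is to observe that this is exactly the kind of relation from which the central symmetry of $A$ (Kolountzakis) and the central symmetry of its facets (our earlier work) are derived. Both arguments proceed through the asymptotic expansion of $\hat\1_A(\xi')$ as $\xi'\to\infty$ along the directions normal to the facets of $A$: the leading terms recover the surface-area measure of $A$, whence its central symmetry by Minkowski's uniqueness theorem, and a finer analysis of these asymptotics yields the central symmetry of each facet. These arguments appear to use only a tiling relation of the form above, with the spectrum replaced by a translation set of bounded density, so carrying them out with the measure $\sigma$ in place of an honest spectrum should give the conclusions for $A$.

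The main obstacle I anticipate is the passage from the exact product Parseval identity to the weak-tiling relation for $A$. Since $B$ is merely measurable, $|\hat\1_B|^2$ is integrable but need not decay at any prescribed rate, so the boundary-layer estimate and the extraction of the limiting measure $\sigma$ require genuine care: one must use the uniform density bound for $\Lam$ together with the integrability of both $|\hat\1_A|^2$ and $|\hat\1_B|^2$ to control the tails and to justify passing to the limit inside the integral against $|\hat\1_A(\xi'-\cdot)|^2$. A secondary point to verify is that the asymptotic symmetry arguments do go through for a general bounded-density measure $\sigma$, and not only for a spectrum.
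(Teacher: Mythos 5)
Your reduction up to the weak--tiling relation is plausible (the everywhere-Parseval identity, the product factorization, and the extraction of a translation-bounded limit measure $\sigma$ with $\int_{\R^n}|\ft{\1}_A(\xi'-u)|^2\,d\sigma(u)=|A|^2|B|$ can all be made rigorous with the boundary-layer care you describe), but the final step fails, and it is not the ``secondary point'' you label it: it is the crux, and it is false as stated. The relation you derive is satisfied, for \emph{every} convex body $A$ whatsoever, by the measure $\sigma=|A||B|\cdot(\text{Lebesgue measure on }\R^n)$, since $\int_{\R^n}|\ft{\1}_A|^2=|A|$ by Plancherel. Moreover, a weak limit of your normalized projections $(2R)^{-m}\sum_{\lambda''\in Q_R}\delta_{\lambda'}$ can genuinely be a multiple of Lebesgue measure: take $\Om$ a cube and $\Lam$ a sheared lattice $\{(k+\alpha l,\,l)\}$ with $\alpha$ irrational, so the projected points equidistribute. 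Hence the relation $|\ft{\1}_A|^2\ast\sigma\equiv\const$ for a positive bounded-density measure carries no geometric information about $A$, and no symmetry conclusion can possibly follow from it. What your averaging destroys is exactly the information the cited proofs use: the arguments of Kolountzakis--Papadimitrakis and of \cite{GL17} do not run on a tiling identity for a measure, but on the \emph{orthogonality} condition $(\Gamma-\Gamma)\setminus\{0\}\subset\zft{A}$ together with \emph{relative denseness} of the set $\Gamma$ (equivalently, on the discrete atomic structure of $\delta_\Gamma$, e.g.\ the atom at the origin of its autocorrelation, which your $\sigma$ does not retain). And note the underlying obstruction survives your projection: for two spectrum points $(u,v),(u',v')$ of the product, orthogonality may be witnessed entirely in the second coordinate via $v-v'\in\zft{B}$, so the projected points satisfy no constraint relative to $\zft{A}$ at all -- this is the actual difficulty of the theorem.

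The paper's proof handles precisely this point by a different mechanism. \lemref{lemC5.3} extracts from a spectrum of $\Om=A\times B$ a \emph{relatively dense set} $\Gamma\subset\R^n$ with $E(\Gamma)$ orthogonal in $L^2(A)$: one only projects spectrum points whose second coordinate lies in a fixed open ball $V\subset\R^m$ of diameter $\chi(B)$, where orthogonality cannot be witnessed on the $B$ side (so differences of the projected points are forced into $\zft{A}$), and an iterated translation/weak-limit argument shows these projections cannot fail to be relatively dense for every placement of $V$ -- otherwise one empties out all of $\R^m$ and contradicts that a weak limit of spectra is a spectrum. Then \thmref{thmC5.1}, the orthogonality-plus-relative-denseness strengthening of Theorems \ref{thmC6.1} and \ref{thmC6.2}, applies directly. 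If you wish to pursue a measure-theoretic route, you would need a relation that Lebesgue measure does \emph{not} satisfy (for instance a weak tiling of the complement, $\1_A\ast\mu=\1_{A^\complement}$), and deriving the symmetry of $A$ and of its facets from such a relation requires genuinely new arguments beyond the asymptotic expansions of $\ft{\1}_A$ in \cite{Kol00a, KP02, GL17}; your description of those proofs as resting on the surface-area measure and Minkowski's uniqueness theorem does not match how they actually proceed.
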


The condition that the convex polytope $A$ is centrally symmetric and has centrally symmetric facets
is also necessary for $A$ to tile by translations, see \cite{McM80}.

\subsection{}
If $A$ is a convex body in $\R^n$ which is not a polytope, then $A$ cannot tile by translations,
see \cite{McM80}. It  is conjectured that such an $A$ can neither be spectral.
In this connection, a result from \cite{IKP99} states that if $A$ is a ball
in $\R^n$ $(n \geq 2)$ then $A$ is not a spectral set. In \cite{IKT01} the same was proved
for any centrally symmetric convex body $A$ with a smooth boundary.

The following theorem supports \conjref{conjA1.1} by extending these results to the context of product sets:

\begin{thm}
	\label{thmC5.5}
	Let $A$ be a centrally symmetric convex body in $\R^n$ 
	$(n \geq 2)$ with a smooth boundary,
	and $B$ be any bounded, measurable set in $\R^m$. Then 
	the product set $\Omega=A\times B$ cannot be  spectral. 
\end{thm}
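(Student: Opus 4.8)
The plan is to use the spectrality of $\Om=A\times B$ to produce a tiling of the single factor $g_A=|\ft{\1}_A|^2$ by a positive measure on $\R^n$, and then to rule out such a tiling by means of the smoothness of $\partial A$, in the spirit of \cite{IKP99,IKT01}. Suppose, for contradiction, that $\Om$ has a spectrum $\Lam\subset\R^{n+m}$. We may assume $A$ is centered at the origin, so that $A=-A$. Recall the Fourier-analytic description of a spectrum: $\Lam$ is a spectrum for $\Om$ if and only if $\sum_{\lam\in\Lam}|\ft{\1}_{\Om}(\xi-\lam)|^2\equiv|\Om|^2$. Since $\ft{\1}_{\Om}(\xi_1,\xi_2)=\ft{\1}_A(\xi_1)\,\ft{\1}_B(\xi_2)$, this factors, with $g_A=|\ft{\1}_A|^2$ and $g_B=|\ft{\1}_B|^2$, as
\[
\sum_{(\lam_1,\lam_2)\in\Lam}g_A(\xi_1-\lam_1)\,g_B(\xi_2-\lam_2)=|A|^2\,|B|^2,\qquad \xi_1\in\R^n,\ \xi_2\in\R^m.
\]

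First I would fix $\xi_2\in\R^m$ and form the positive measure $\nu_{\xi_2}=\sum_{(\lam_1,\lam_2)\in\Lam}g_B(\xi_2-\lam_2)\,\delta_{\lam_1}$ on $\R^n$. A routine verification, using that a spectrum of a bounded set is uniformly discrete and that $g_B\in L^1(\R^m)$, shows that $\nu_{\xi_2}$ is a well-defined tempered positive measure, and the displayed identity then reads $g_A*\nu_{\xi_2}\equiv|A|^2|B|^2$. Thus, for every $\xi_2$, the function $g_A=|\ft{\1}_A|^2$ is tiled at the constant positive level $|A|^2|B|^2$ by the positive, purely atomic — hence Lebesgue-singular — measure $\nu_{\xi_2}$. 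This is precisely the analogue, with a general positive measure in place of a Dirac comb, of the condition expressing that $A$ itself is spectral; note that the trivial solution $\nu=\mathrm{const}\cdot\mathrm{Leb}$ of $g_A*\nu\equiv\mathrm{const}$ is absolutely continuous, whereas $\nu_{\xi_2}$ is singular.

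To bring in the smoothness I would pass to the Fourier side. The transform $\widehat{g_A}$ equals the covariogram $\gamma_A(x)=|A\cap(A+x)|$, a continuous function supported on $A-A=2A$, strictly positive on $\interior(2A)$ and vanishing on its boundary, with $\gamma_A(0)=|A|$. Transforming $g_A*\nu_{\xi_2}\equiv|A|^2|B|^2$ yields $\gamma_A\cdot\widehat{\nu}_{\xi_2}=|A|^2|B|^2\,\delta_0$; as $\gamma_A>0$ on $\interior(2A)\setminus\{0\}$, this forces
\[
\widehat{\nu}_{\xi_2}=|A|\,|B|^2\,\delta_0+\sigma,\qquad \supp\sigma\subset\R^n\setminus\interior(2A).
\]
In other words, $\nu_{\xi_2}$ differs from a constant multiple of Lebesgue measure by a measure whose spectrum avoids the open body $\interior(2A)$.

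The hard part — the crux of the whole argument — is to contradict the coexistence of a \emph{singular} such $\nu_{\xi_2}$ with a \emph{smooth} boundary $\partial A$. Curvature must enter here: for a polytope $\partial(2A)$ is flat and genuine singular solutions do occur (for the cube a dual lattice gives the spectrum $\Z^n$), so the proof must use the oscillatory asymptotics $\ft{\1}_A(\rho\omega)\sim a(\omega)\,\rho^{-(n+1)/2}\cos\!\bigl(2\pi\rho\,h_A(\omega)-c_n\bigr)$ valid at points of positive curvature, $h_A$ being the support function of $A$ (the general smooth case being treated as in \cite{IKT01}). The required statement is the natural strengthening of \cite{IKP99,IKT01}, which exclude Dirac combs, to arbitrary positive measures: for a smooth $A$, the function $g_A=|\ft{\1}_A|^2$ admits no Lebesgue-singular positive tiling. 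I expect this to be the most delicate step, because — unlike for a spectrum — the relation $g_A*\nu_{\xi_2}\equiv\mathrm{const}$ does not split into an orthogonality condition placing $\supp\nu_{\xi_2}-\supp\nu_{\xi_2}$ inside the zero set $\zft{A}$, and the weights $g_B(\xi_2-\lam_2)$ inherited from the arbitrary set $B$ are non-uniform; one must instead control the singular part $\sigma$ directly, through the curved geometry of $\partial(2A)$ together with the decay and the distribution of the zeros of $\ft{\1}_A$.
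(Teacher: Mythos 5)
Your reduction to the one-factor tiling $g_A*\nu_{\xi_2}\equiv|A|^2|B|^2$ is correct (the verifications you defer -- local finiteness and temperedness of $\nu_{\xi_2}$, and the factorization of the tiling condition via $\ft{\1}_\Om=\ft{\1}_A\otimes\ft{\1}_B$ -- are indeed routine, modulo the standard care needed to multiply a distribution by the merely continuous $\gamma_A$), but the argument stops exactly where the difficulty begins, as you yourself acknowledge: the assertion that for smooth $A$ the function $g_A=|\ft{\1}_A|^2$ admits no Lebesgue-singular positive tiling is not in \cite{IKP99} or \cite{IKT01}, and it does not follow from their method by inspection. Those proofs are not about the convolution equation at all: they work with a relatively dense set $\Gam$ whose difference set lies in $\zft{A}$, select pairs $\gam,\gam'\in\Gam$ with $\gam-\gam'$ long and pointing in an essentially prescribed direction, and contradict the stationary-phase asymptotics of $\ft{\1}_A$ using the \emph{exact vanishing} $\ft{\1}_A(\gam-\gam')=0$. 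Your measure $\nu_{\xi_2}$ carries no such information: distinct points $(\lam_1,\lam_2),(\lam_1',\lam_2')\in\Lam$ may well have $\lam_1-\lam_1'\notin\zft{A}$ (namely when $\lam_2-\lam_2'\in\zft{B}$), so the atoms of $\nu_{\xi_2}$ have no difference-set structure; and the Fourier-side condition you extract, $\supp\sigma\subset\R^n\setminus\interior(2A)$, is far too weak -- it confines $\widehat{\nu}_{\xi_2}-c\,\delta_0$ merely to the complement of an open body, a set with no geometry left to exploit, and (as your own cube example shows) singular measures satisfying such a support condition abound. So the step you label the crux is a genuinely open problem in the generality you need, not a citable strengthening.

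The paper closes this gap by a different reduction, one that recovers precisely the hypotheses under which the argument of \cite{IKT01} runs verbatim. By \lemref{lemC5.3}, spectrality of $A\times B$ yields a relatively dense set $\Gam\subset\R^n$ with $E(\Gam)$ orthogonal in $L^2(A)$. The mechanism is a slicing-plus-weak-limit argument: intersect the spectrum with a thin slab $\R^n\times V$, where $V$ is a ball of diameter $\chi(B)$, so that any two points of $\Lam$ in the slab have second coordinates too close for $\ft{\1}_B$ to vanish at their difference, forcing their first coordinates to differ by an element of $\zft{A}$; and if every such slice failed to be relatively dense, one could translate the spectrum and pass to weak limits repeatedly, obtaining spectra disjoint from $\R^n\times(V_1\cup\cdots\cup V_k)$ for balls $V_k$ covering $\R^m$, whose weak limit would be empty -- impossible, since a weak limit of spectra of $\Om$ is again a spectrum. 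Then \thmref{thmC5.2} -- the observation that the proof in \cite{IKT01} uses only the orthogonality of $E(\Gam)$ and the relative denseness of $\Gam$, never completeness -- finishes the proof. To salvage your route you would have to prove the singular-tiling exclusion for smooth convex bodies from scratch, controlling the non-uniform weights $g_B(\xi_2-\lam_2)$ directly; the slicing device above is the idea your proposal is missing.
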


\subsection{}
The  next theorem is the main result of this paper. The result confirms
that \conjref{conjA1.1} is true if $A$ is a convex polygon in two dimensions:

\begin{thm}
	\label{thmA1.1}
	Let $\Omega=A\times B$, where $A$ is a convex polygon in $\R^2$,
	and $B$ is a bounded, measurable set in $\R^m$.
	Then $\Omega$ is spectral  if and only if $A$ and $B$ are both spectral sets.
\end{thm}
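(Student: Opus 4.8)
\emph{The ``if'' part is immediate: if $U$ and $V$ are spectra for $A$ and $B$ respectively, then $U\times V$ is a spectrum for $\Omega$. I concentrate on the converse.} Assume $\Omega=A\times B$ is spectral with spectrum $\Lambda\subset\R^2\times\R^m$. The plan is to run everything through the standard reformulation of spectrality: $\Lambda$ is a spectrum for $\Omega$ if and only if $\sum_{\lambda\in\Lambda}|\ft{\1}_\Omega(\zeta-\lambda)|^2=|\Omega|^2$ for a.e.\ $\zeta$. Since $\ft{\1}_\Omega=\ft{\1}_A\otimes\ft{\1}_B$, writing $\lambda=(u,v)$ and $\zeta=(\xi,\eta)$ this reads
\begin{equation}\label{eq:prodtile}
\sum_{(u,v)\in\Lambda}|\ft{\1}_A(\xi-u)|^2\,|\ft{\1}_B(\eta-v)|^2=|A|^2\,|B|^2\qquad\text{a.e.}
\end{equation}
By \thmref{thmC5.4}, $A$ is a centrally symmetric convex polygon. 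I would then prove two assertions: (a) $A$ is a parallelogram or a centrally symmetric hexagon, whence $A$ is spectral by \cite{IKT03}; and (b) $B$ is spectral.

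The engine is the fibered structure of $\Lambda$. Let $P=\pi_1(\Lambda)\subset\R^2$ and, for $u\in P$, set $\Lambda_u=\{v:(u,v)\in\Lambda\}$. If $(u,v),(u,v')\in\Lambda$ with $v\ne v'$, orthogonality forces $\ft{\1}_A(0)\,\ft{\1}_B(v-v')=0$, so $v-v'\in\zft{B}$; thus each fiber $\Lambda_u$ is an orthogonal system of exponentials for $B$, and by Bessel's inequality $S_u(\eta):=\sum_{v\in\Lambda_u}|\ft{\1}_B(\eta-v)|^2\le|B|^2$, with equality for a.e.\ $\eta$ precisely when $\Lambda_u$ is a spectrum for $B$. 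Grouping \eqref{eq:prodtile} by the first coordinate gives $\sum_{u\in P}|\ft{\1}_A(\xi-u)|^2\,S_u(\eta)=|A|^2|B|^2$. Hence, to prove (b) it suffices to exhibit a single fiber $\Lambda_{u^*}$ with $S_{u^*}\equiv|B|^2$. Symmetrically, the fibers $\Lambda^w=\{u:(u,w)\in\Lambda\}$ over the second coordinate are orthogonal systems for $A$ (their equal-$w$ differences lie in $\zft{A}$), which is the link to (a).

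For (a), I would adapt the analysis of Iosevich--Katz--Tao. Their study of $\ft{\1}_A$ for a convex polygon shows that membership of differences in $\zft{A}$ constrains an $A$-orthogonal set, in each edge-normal direction $\nu_e$, to lie in an arithmetic progression whose step is governed by the width of $A$ in that direction; a counting argument using central symmetry then bounds the number of edges by six. In the product setting these constraints apply to each fiber $\Lambda^w$, and I would combine them with the global density $\operatorname{dens}(\Lambda)=|A||B|$ extracted from \eqref{eq:prodtile} to rule out polygons with eight or more edges, leaving only the parallelogram and the hexagon. For (b) in the parallelogram case the problem reduces cleanly: an affine map carries $A$ to the square $[-\tfrac12,\tfrac12]^2$, and applying \thmref{thmA1.0} twice to $[-\tfrac12,\tfrac12]\times\bigl([-\tfrac12,\tfrac12]\times B\bigr)$ shows that $B$ is spectral.

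The crux is the hexagonal case, which admits no reduction to a product and therefore requires genuinely new input; here the task is to upgrade the Bessel bound $S_u\le|B|^2$ to equality on some fiber. A centrally symmetric hexagon is indecomposable, hence by \cite{GL17} has a spectrum that is unique up to translation, and it moreover tiles $\R^2$ by a lattice. I would use this rigidity to pin down the $A$-orthogonal fibers $\Lambda^w$, and then feed the resulting structure back into $\sum_{u\in P}|\ft{\1}_A(\xi-u)|^2\,S_u(\eta)=|A|^2|B|^2$ in order to show that the $A$-marginal saturates, i.e.\ $\sum_{u\in P}|\ft{\1}_A(\xi-u)|^2=|A|^2$. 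Since each $S_u\le|B|^2$, this forces $S_u(\eta)=|B|^2$ wherever $\ft{\1}_A(\xi-u)\ne0$, exhibiting a complete fiber $\Lambda_{u^*}$ as a spectrum for $B$. I expect this final tightness step---converting the lattice rigidity of the $A$-fibers into exact saturation of the Bessel inequality on the $B$-fibers---to be the main obstacle, precisely because it is the point at which the product must be disentangled without the benefit of an affine reduction.
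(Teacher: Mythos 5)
Your reduction to exact fibers is where the argument breaks. The fibers $\Lambda_u=\{v:(u,v)\in\Lambda\}$ and $\Lambda^w=\{u:(u,w)\in\Lambda\}$ live over \emph{exactly equal} coordinates, and nothing prevents a spectrum of a product from having all such fibers trivial: already for the unit square, the sets $\bigcup_{k\in\Z}\bigl((\Z+\theta_k)\times\{k\}\bigr)$ with distinct shifts $\theta_k\in[0,1)$ are spectra whose fibers over the first coordinate are singletons and whose first projection $P$ is dense. This kills both halves of your plan. For (a), the Iosevich--Katz--Tao progression constraints need a relatively dense orthogonal set in $\R^2$, and the fibers $\Lambda^w$ need not be relatively dense (or even contain two points); the paper's \lemref{lemC5.3} does produce a relatively dense $A$-orthogonal set from the spectrality of $\Omega$, but that only yields the symmetry conclusions of \thmref{thmC5.1}, not the six-edge bound, precisely because orthogonality without completeness does not feed the IKT counting. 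For (b), your ``final tightness step'' is false as stated: in the shifted-column example the marginal $\sum_{u\in P}|\ft{\1}_A(\xi-u)|^2$ diverges, compensated by small $S_u$, so exact saturation $\sum_{u\in P}|\ft{\1}_A(\xi-u)|^2=|A|^2$ cannot be extracted from the grouped tiling identity, and no single exact fiber need be a spectrum for $B$. The appeal to spectrum-uniqueness of the hexagon does not help here either: that rigidity concerns spectra of $A$, while the fibers $\Lambda^w$ are merely (possibly tiny) orthogonal sets.

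The paper's proof repairs exactly these two defects, by fibering over a set of positive measure rather than a point, and by replacing $\zft{A}$ with a discrete surrogate. First, \lemref{lemD2.1}: translating the spectrum to infinity along each edge direction, taking weak limits, and using the asymptotics $\pi u\,\ft{\1}_A(u,v)=\sin\pi u\cdot\ft{\1}_I(v)+O(|u|^{-1})$, one obtains a spectrum $\Lam$ of $\Omega$ with $(\Lam-\Lam)\setminus\{0\}\subset(H(A)\times\R^m)\cup(\R^2\times\zft{B})$, where $H(A)$ is the explicit discrete set of \defref{defD1.4}. Second, \thmref{thmD1.7}: an explicit rectangle $W$ satisfies $\Delta(W)\cap H(A)=\emptyset$ with $|W|=|P|^{-1}$, where $P\subset A$ is the hexagon (or square) spanned by two adjacent edge pairs, so $|W|\geq|A|^{-1}$ with \emph{strict} inequality unless $A$ is a parallelogram or a hexagon. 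These make $\Lam$ $W$-compatible, and Kolountzakis' density argument (\thmref{thmC4.3}, proved via comparing the tiling $(f_A\otimes f_B)+\Lam$ with the packing $(\1_{-W}\otimes f_B)+\Lam$) forces $|W|=|A|^{-1}$ --- which rules out all polygons except parallelograms and hexagons, giving (a) --- and simultaneously upgrades the packing to a tiling, so that for a.e.\ $x$ the cut-and-project set $\Gamma(\Lam,W+x)$ (your saturation, but over the window $W+x$ rather than a point) is a spectrum for $B$, giving (b) uniformly in both cases with no separate hexagonal analysis. Your affine reduction of the parallelogram case via \thmref{thmA1.0} is correct but unnecessary in this route.
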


If $A$  is a parallelogram, then this is a consequence of \thmref{thmA1.0}.
The new result is therefore that \thmref{thmA1.1} is true also if $A$ is
a convex polygon  which is not a parallelogram.
Our proof establishes that  in this case, $A$ must be a centrally symmetric hexagon. 
In particular, this implies the result from \cite{IKT03} that the 
 spectral convex polygons are exactly the parallelograms
and the centrally symmetric hexagons.

If both $A$ and $B$ are convex polygons in $\R^2$, then \thmref{thmA1.1}  implies
that their product $\Omega=A\times B$ is a spectral set if and only if $\Omega$ tiles by translations.
Combining this with the results obtained in \cite{GL16, GL17}, we can confirm that Fuglede's
conjecture is true for the class of decomposable convex polytopes in four dimensions:

\begin{corollary}
	\label{corA1.3}
	Let $\Omega \subset \R^4$  be a convex polytope, and assume 
	that $\Omega$ is decomposable. Then $\Omega$ is a spectral set if and only if it can tile by translations.
\end{corollary}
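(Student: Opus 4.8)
The plan is to reduce to a genuine product and then split into cases according to the dimensions of the two factors. Since both spectrality and tiling are invariant under invertible affine transformations, and $\Omega$ is decomposable by hypothesis, I may assume that $\Omega = A \times B$ is an honest cartesian product of convex polytopes $A \subset \R^n$ and $B \subset \R^m$ with $n+m = 4$ and $n, m \geq 1$. Up to interchanging the two factors, this leaves exactly two configurations: either one factor is an interval (the case $(n,m)=(1,3)$), or both factors are convex polygons in the plane (the case $(n,m)=(2,2)$).

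In the first configuration I would apply \thmref{thmA1.0}: with $A$ an interval in $\R$ and $B$ a convex polytope in $\R^3$, it asserts that $\Omega$ is spectral if and only if $B$ is spectral. Since $B$ is a three-dimensional convex polytope, Fuglede's conjecture for such polytopes, established in \cite{GL16, GL17}, gives that $B$ is spectral if and only if $B$ tiles $\R^3$ by translations. Finally, the product characterization of tiling from \cite{Kol16} shows that $\Omega = A \times B$ tiles $\R^4$ if and only if both $A$ and $B$ tile; as an interval always tiles $\R$, this reduces to $B$ tiling $\R^3$. Chaining these equivalences yields that $\Omega$ is spectral if and only if it tiles.

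In the second configuration both $A$ and $B$ are convex polygons, and I would invoke the main result \thmref{thmA1.1} (taking $A$ to be one polygon and $B$ the other, viewed as a bounded measurable set in $\R^2$) to obtain that $\Omega$ is spectral if and only if both $A$ and $B$ are spectral. By \cite{IKT03} --- or, as remarked after \thmref{thmA1.1}, by the proof given in the present paper --- a convex polygon is spectral precisely when it is a parallelogram or a centrally symmetric hexagon, which is exactly when it tiles $\R^2$ by translations. Hence $A$ is spectral iff $A$ tiles and $B$ is spectral iff $B$ tiles, and combining this with the product characterization of tiling from \cite{Kol16} again delivers the desired equivalence.

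The argument is essentially a matter of tracking which known result governs each dimensional configuration, so there is no serious analytic obstacle here; the only points requiring care are to confirm that the enumeration $(n,m) \in \{(1,3),(2,2),(3,1)\}$ is exhaustive, and that in both \thmref{thmA1.0} and \thmref{thmA1.1} the factor playing the role of ``$B$'' is allowed to be an arbitrary bounded measurable set --- which it is --- so that no extraneous hypotheses on the factors are needed. I expect the whole corollary to follow by simply assembling these ingredients.
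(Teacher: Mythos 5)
Your proposal is correct and follows essentially the same route as the paper: reduce by affine invariance to an honest product $A \times B$, split into the cases $(n,m)=(1,3)$ and $(n,m)=(2,2)$, and apply \thmref{thmA1.0} together with the three-dimensional result of \cite{GL17} in the first case and \thmref{thmA1.1} together with the parallelogram/hexagon characterization in the second. The only cosmetic differences are that you chain two-way equivalences via the product-tiling fact from \cite{Kol16} where the paper handles the ``if'' direction by citing that convex polytopes which tile are spectral, and that you quote \corref{corD1.8} (equivalently \cite{IKT03}) where the paper extracts the same conclusion from the proof of \thmref{thmA1.1}; neither affects correctness.
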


Thus, if we want to prove  Fuglede's conjecture for convex polytopes in dimension $d=4$,
then the case when $\Omega$ is decomposable is now covered by \corref{corA1.3},
and what remains to be proved is that   an indecomposable convex polytope
$\Omega \subset \R^4$  can be spectral only if it tiles by translations.
We will address this problem in a future work.


\section{Preliminaries} \label{secC1}

\subsection{Notation}
We use $\dotprod{\cdot}{\cdot}$ and $|\cdot|$ for the Euclidean scalar product and norm in $\R^d$.
\par
If $A \subset \R^d$ then $A^\complement$ denotes 
the complement  of $A$ (i.e.\ the set $\R^d \setminus A$),
$\1_A$ is the indicator function of $A$,
and $|A|$ or $\mes(A)$ is  the Lebesgue measure of $A$.
We use $A+B$, $A-B$ to denote the set of sums and set of 
differences of two sets $A, B \subset \R^d$.

If $f$ and $g$ are two measurable functions on $\R^n$ and $\R^m$ respectively,
then we denote by $f \otimes g$ the function on $\R^n \times \R^m$ defined by
$(f \otimes g)(x,y) = f(x)g(y)$.

\subsection{Spectra}
If $\Omega$ is a bounded, measurable set in $\R^d$ of positive measure, then
by a \emph{spectrum} for $\Omega$ we mean a countable set $\Lambda\subset\R^d$
such that the system of exponential functions $E(\Lambda)$ defined by \eqref{eqI1.1} is 
orthogonal and complete in the space $L^2(\Omega)$.

For any two points $\lam,\lam'$ in $\R^d$ we have
\[
		\dotprod{e_\lambda}{e_{\lambda'}}_{L^2(\Omega)} = \hat{\1}_\Omega(\lambda'-\lambda), 
\]
where
\[
\hat{\1}_\Omega(\xi) = \int_{\Omega} e^{-2\pi i\langle \xi,x\rangle} dx, \quad \xi \in \R^d,
\]
is the Fourier transform of the indicator function $\1_\Omega$ of the set $\Omega$.
The orthogonality of the system $E(\Lambda)$ in $L^2(\Omega)$ is therefore equivalent to the condition
\begin{equation}
	\label{eqP1.2}
	(\Lambda-\Lambda) \setminus \{0\} \subset  \zft{\Omega}, 
\end{equation}
where $\zft{\Omega} := \{ \xi \in \R^d : \hat{\1}_\Omega(\xi)=0\}$ is the set of zeros of the function $\hat{\1}_\Omega$.

The property of $\Lambda$ being a spectrum for $\Omega$ is
invariant under translations of both $\Omega$ and $\Lambda$.
If $M$ is a $d \times d$  invertible matrix, then
 $\Lambda$ is a spectrum for $\Omega$ if and only if
the set $(M^{-1})^\top (\Lambda)$ is a spectrum for $M(\Omega)$.

A set $\Lambda\subset \R^d$ is said to be \emph{uniformly discrete} if there is
$\delta>0$ such that $|\lambda'-\lambda|\ge \delta$ for any two distinct points
$\lambda,\lambda'$ in $\Lambda$. The maximal constant $\delta$ with this property is
called the \emph{separation constant} of $\Lambda$, and will be denoted by
$\delta(\Lambda)$.

The condition \eqref{eqP1.2} implies that every spectrum $\Lambda$ of $\Omega$
is a uniformly discrete set, and that its separation constant $\delta(\Lambda)$
is at least as large as the constant
\begin{equation}
	\label{eqP1.3}
	\chi(\Omega):= \min \big\{ |\xi| \;:\; \xi\in \zft{\Omega} \big\}> 0.
\end{equation}

\subsection{Weak limits}\label{secLimits}
Let $\Lambda_k$ be a sequence of uniformly discrete sets in $\R^d$, such that
$\delta(\Lambda_k)\ge\delta>0$. The sequence $\Lambda_k$ is said to \emph{converge
weakly} to a set $\Lambda$ if for every $\varepsilon>0$ and every $R$ there is
$N$ such that 
\[
\Lambda_k \cap B_R\subset \Lambda+B_\varepsilon \quad \text{and} \quad \Lambda\cap B_R\subset
\Lambda_k+B_\varepsilon
\]
 for all $k \ge N$, where $B_r$ denotes here the open ball of
radius $r$ centered at the origin. The weak limit $\Lambda$ is also a
uniformly discrete set, and satisfies $\delta(\Lambda)\ge \delta$. 

A compactness argument shows that  any sequence $\Lambda_k$ satisfying $\delta(\Lambda_k)\ge
\delta>0$, has a subsequence $\Lambda_{k_j}$ which converges weakly to some
(possibly empty) set $\Lambda$.

If for each $k$ the set $\Lambda_k$ is a spectrum for $\Omega$, and if $\Lambda_k$ converges
weakly to a limit $\Lambda$, then also $\Lambda$ is a spectrum for $\Omega$.
See \cite[Section 3]{GL16}.

\subsection{Tiling and packing}
Let $f \geq 0$ be a measurable function on $\R^d$, and $\Lam$ be a countable set in $\R^d$. 
We will say that $f+\Lam$ is a \emph{tiling} if the condition
\begin{equation}
\label{eqC3.1}
\sum_{\lam\in\Lam}f(x-\lam) = 1\quad\text{a.e.}
\end{equation}
is satisfied. If we only have
\begin{equation}
\label{eqC3.2}
\sum_{\lambda\in\Lambda}f(x-\lambda)\leq 1\quad\text{a.e.}
\end{equation}
then we will say that $f+\Lam$ is a \emph{packing}.

If $f=\1_\Omega$ is the indicator function of a bounded, measurable set
$\Omega\subset\R^d$, then the condition \eqref{eqC3.1} means that
the sets $\Omega+\lam$ $(\lam\in\Lam)$ constitute
a partition of $\R^d$ up to measure zero,
while \eqref{eqC3.2} says  that these sets are pairwise disjoint up to 
measure zero. In the former case we will
 say that  $\Omega+\Lam$  is a tiling, while in the latter we say
 that  $\Omega+\Lam$  is a packing.

The following lemma may be found,  for example, in \cite[Section 3.1]{Kol04}.
It gives a characterization of the spectra of $\Omega$, or the
exponential systems orthogonal in $L^2(\Omega)$, by
a tiling or a packing condition, respectively.

\begin{lem}
	\label{lemC1.7}
	Let $\Omega$ be a bounded, measurable set in $\R^d$, and define the function 
\[
f:= \sqft{\Omega}.
\]
\begin{enumerate-math}
	\item \label{lemC1.7.1}
	For a set $\Lam\subset\R^d$ to be a spectrum for $\Omega$ it is necessary and sufficient that $f+\Lam$ is a tiling.
	\item \label{lemC1.7.2}
	For a system of exponentials $E(\Lam)$ to be orthogonal in $L^2(\Omega)$ it is necessary and sufficient that $f+\Lam$ is a packing.
\end{enumerate-math}
\end{lem}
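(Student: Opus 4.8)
The plan is to reduce both equivalences to a single identity tying the frame structure of $E(\Lam)$ to the periodized function $\Phi(x):=\sum_{\lam\in\Lam}f(x-\lam)$, and then to read off the conclusions from elementary Hilbert-space theory. For each $t\in\R^d$ I would introduce the test function $g_t:=e_t\cdot\1_\Om$, viewed as an element of $L^2(\Om)$. A direct computation gives $\dotprod{g_t}{e_\lam}_{L^2(\Om)}=\hat\1_\Om(\lam-t)$, and since $\1_\Om$ is real-valued the modulus $|\hat\1_\Om|$ is even, whence
\[
\sum_{\lam\in\Lam} \bigl| \dotprod{g_t}{e_\lam}_{L^2(\Om)} \bigr|^2
= \sum_{\lam\in\Lam}\bigl|\hat\1_\Om(t-\lam)\bigr|^2
= |\Om|^2\,\Phi(t),
\qquad \|g_t\|_{L^2(\Om)}^2 = |\Om|.
\]
Note that $f(0)=1$, so whenever $t=\lam$ the corresponding term alone already contributes $1$ to $\Phi(t)$. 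This identity is the engine of the whole argument.

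For the two necessity directions I would use that orthogonality of $E(\Lam)$ is equivalent to $\{|\Om|^{-1/2}e_\lam\}$ being orthonormal, in which case its frame operator $Sg:=\sum_{\lam}\dotprod{g}{e_\lam}_{L^2(\Om)}e_\lam$ equals $|\Om|\,P$, with $P$ the orthogonal projection onto $\overline{\operatorname{span}}\{e_\lam\}$; consequently $\dotprod{Sg}{g}\le|\Om|\,\|g\|^2$ for every $g$, with equality for every $g$ exactly when the system is also complete (Parseval). Substituting $g=g_t$ and using $\dotprod{Sg_t}{g_t}=|\Om|^2\Phi(t)$ yields $\Phi(t)\le1$ for all $t$ when $E(\Lam)$ is orthogonal, and $\Phi(t)=1$ for all $t$ when $E(\Lam)$ is a spectrum. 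Hence orthogonality forces the packing \eqref{eqC3.2} and spectrality forces the tiling \eqref{eqC3.1}.

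The sufficiency directions carry the real content. To deduce orthogonality from packing, rather than argue globally I would localize at a point of $\Lam$: fix $\lam_0\in\Lam$, convolve the a.e.\ bound $\Phi\le1$ against an approximate identity $\varphi_\eps\ge0$ concentrating at $\lam_0$, and pass to the limit. Since $f=\sqft{\Om}$ is continuous (as $\hat\1_\Om$ is) and all terms are nonnegative, Fatou's lemma gives $\sum_{\lam}f(\lam_0-\lam)\le1$; as the single term $\lam=\lam_0$ already equals $f(0)=1$, every other term must vanish, i.e.\ $\hat\1_\Om(\lam_0-\lam)=0$ for all $\lam\ne\lam_0$, which is precisely \eqref{eqP1.2}.

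Finally, to deduce spectrality from tiling I would first extract orthogonality as above (since $\Phi=1$ a.e.\ gives $\Phi\le1$ a.e.), so that now $T:=|\Om|\,I-S=|\Om|(I-P)\ge0$. The identity gives $\dotprod{Tg_t}{g_t}=|\Om|^2(1-\Phi(t))=0$ for a.e.\ $t$, and positivity of $T$ upgrades this to $Tg_t=0$ for a.e.\ $t$. The main obstacle is the completeness step, turning these pointwise vanishings into $T=0$: this succeeds because for $h\in L^2(\Om)$ the relation $\dotprod{g_t}{h}=0$ for a.e.\ $t$ says $\widehat{h\1_\Om}(t)=0$ a.e., forcing $h=0$, so the family $\{g_t\}$ over the full-measure set $\{\Phi=1\}$ spans $L^2(\Om)$; since $\ker T$ is closed and contains this family, $\ker T=L^2(\Om)$, whence $T=0$, $P=I$, and $E(\Lam)$ is a complete orthogonal system, that is, a spectrum. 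The only analytic inputs beyond the Hilbert-space formalism are the continuity and the injectivity of the Fourier transform.
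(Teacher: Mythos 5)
Your proposal is correct and is essentially the same argument as the one the paper invokes by reference (Kolountzakis \cite[Section 3.1]{Kol04} and \cite[Section 2.4]{GL16}): test Bessel/Parseval against the modulated indicators $g_t = e_t \1_\Omega$, whose coefficients $\hat{\1}_\Omega(\lam - t)$ turn the frame bound into the periodization $\sum_{\lam} f(t-\lam)$, with continuity of $\hat{\1}_\Omega$ (your approximate-identity/Fatou step is just a variant of the usual lower-semicontinuity upgrade of the a.e.\ packing bound) and Fourier uniqueness supplying the orthogonality and completeness conclusions. The frame-operator packaging is cosmetic; the substance matches the standard proof.
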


A proof of part \ref{lemC1.7.1} of \lemref{lemC1.7} is given also in \cite[Section 2.4]{GL16}.
The proof of part \ref{lemC1.7.2} is similar.

The next lemma may be found e.g.\ in \cite[Sections 1.1 and 3.3]{Kol04}.

\begin{lem}
	\label{lemC1.6}
	Let $f,g \in L^1(\R^d)$, $f,g \geq 0$. Assume that  $\Lam \subset \R^d$ is a set
	such that $f+\Lam$ is a tiling, while $g + \Lam$ is a packing. Then:
	\begin{enumerate-math}
		\item \label{lemC1.6.1}
		$\int g \leq \int f$.
		\item \label{lemC1.6.2}
		$\int g = \int f$ if and only if $g + \Lam$ is a tiling.
	\end{enumerate-math}
\end{lem}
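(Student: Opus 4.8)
The plan is to deduce both parts from the single pointwise inequality between the two periodizations, integrated against a cleverly chosen weight. Write $\Phi(x)=\sum_{\lam\in\Lam}f(x-\lam)$ and $\Psi(x)=\sum_{\lam\in\Lam}g(x-\lam)$; the hypotheses say $\Phi=1$ and $\Psi\le 1$ a.e., so $\Psi\le\Phi$ a.e. I would integrate this against the \emph{reflected} weight $\tilde f(x):=f(-x)\ge 0$. The whole point of this particular weight is that its ``$+\lam$'' periodization is again identically constant,
\[
\sum_{\lam\in\Lam}\tilde f(u+\lam)=\sum_{\lam\in\Lam}f(-u-\lam)=\Phi(-u)=1\quad\text{a.e.},
\]
so that no reflection-invariance-of-tilings lemma and no density/boundary estimates are needed.

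For part (1): since $\Phi-\Psi\ge 0$ and $\tilde f\ge 0$, and using $\Phi\equiv 1$ together with $\int\tilde f=\int f$,
\[
0\le\int_{\R^d}(\Phi-\Psi)\,\tilde f=\int_{\R^d} f-\int_{\R^d}\Psi\,\tilde f .
\]
The remaining term I would evaluate by Tonelli's theorem (all integrands are nonnegative) and the substitution $u=x-\lam$:
\[
\int_{\R^d}\Psi\,\tilde f=\sum_{\lam\in\Lam}\int_{\R^d}\tilde f(x)\,g(x-\lam)\,dx=\int_{\R^d}g(u)\Big(\sum_{\lam\in\Lam}\tilde f(u+\lam)\Big)du=\int_{\R^d}g ,
\]
the last equality by the periodization identity for $\tilde f$. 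Hence $\int g\le\int f$.

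For part (2), the direction ``$g+\Lam$ tiling $\Rightarrow\int g=\int f$'' is immediate: a tiling is in particular a packing, so part (1) applies both as stated and with the roles of $f$ and $g$ interchanged, giving $\int g\le\int f$ and $\int f\le\int g$. For the converse, equality $\int g=\int f$ forces the nonnegative integrand $(\Phi-\Psi)\tilde f$ to vanish a.e., i.e.\ $\Psi=1$ a.e.\ on $\{\tilde f>0\}=-\{f>0\}$. To upgrade this to $\Psi=1$ a.e.\ on all of $\R^d$, I would rerun the argument with $\Lam$ replaced by a translate $\Lam-\mu$: for every $\mu\in\R^d$ the set $f+(\Lam-\mu)$ is still a tiling and $g+(\Lam-\mu)$ still a packing with the same integrals, and the identical computation yields $\Psi=1$ a.e.\ on $-\{f>0\}+\mu$. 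Since $\{f>0\}$ has positive measure, a Fubini argument over $\mu$ then shows that the exceptional set $\{\Psi\neq 1\}$ is null, so $g+\Lam$ is a tiling.

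I expect the main obstacle to be precisely this ``spreading'' step in the equality case: the weighted identity only detects $\Psi$ on the support of $f$, so the genuine content is in propagating the value $1$ from $-\{f>0\}$ to the whole space, which the translation-by-$\mu$ trick together with the positive-measure/Fubini argument accomplishes. Everything else reduces to Tonelli's theorem and the two elementary periodization identities $\Phi\equiv 1$ and $\sum_{\lam}\tilde f(\cdot+\lam)\equiv 1$.
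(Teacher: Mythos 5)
Your proof is correct, and for part (i) it is, despite the different packaging, the same computation as the paper's: the paper convolves with $\delta_\Lam := \sum_{\lam\in\Lam}\delta_\lam$, sets $h := 1 - g*\delta_\Lam \geq 0$, and computes $f*h = f*1 - g*(f*\delta_\Lam) = \int f - \int g$ by associativity and Tonelli; your weighted integral $\int (\Phi-\Psi)\,\tilde f$ is precisely the value $(f*h)(0)$ of that convolution, and your two periodization identities are the same Fubini steps. The genuine divergence is in the equality case of part (ii). The paper keeps the whole convolution rather than its value at one point: by the computation above, $f*h$ is the \emph{constant} function $\int f - \int g$, so equality forces $f*h$ to vanish identically, and then one invokes the fact (Tonelli once more, via $\int f*h = \int f \cdot \int h$) that a convolution of two nonnegative functions cannot vanish identically unless one of them is a.e.\ zero; since $f$ is not a.e.\ zero (being part of a tiling), $h=0$ a.e., i.e.\ $g+\Lam$ is a tiling. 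Because you evaluated only at the point $0$, you learn only that $\Psi = 1$ a.e.\ on $-\{f>0\}$, and you must recover the lost translation invariance by hand, replacing $\Lam$ by $\Lam-\mu$ and averaging over $\mu$. That spreading step is correct as you set it up: for every $\mu$ the measure of $\{\Psi\neq 1\}\cap\bigl(-\{f>0\}+\mu\bigr)$ vanishes, and integrating in $\mu$ and applying Tonelli gives $|\{f>0\}|\cdot|\{\Psi\neq 1\}| = 0$, whence $\Psi=1$ a.e.\ since $|\{f>0\}|>0$. But it is exactly the work the paper's formulation avoids, since the constancy of $f*h$ in the spatial variable already encodes the one-parameter family of identities you generate by translating $\Lam$. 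Both routes prove the lemma with the same ingredients (nonnegativity, Tonelli, periodization); the paper's is the slicker packaging, while yours makes the mechanism --- detection of $\Psi$ on the reflected support of $f$, then propagation by translation --- explicit.
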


\begin{proof}
	Since $f+\Lam$ is a tiling and $g + \Lam$ is a packing, we have
	\[
	f*\delta_\Lam=1 \quad \text{a.e.} , \qquad g*\delta_\Lam\leq 1 \quad \text{a.e.},
	\]
	where we denote
	\[
	\delta_\Lam := \sum_{\lam \in \Lam} \delta_\lam.
	\]
	The function $h := 1-g*\delta_\Lam$ is therefore nonnegative a.e., which implies that
	\[
	0\leq f \ast h =f \ast 1-f \ast (g\ast\delta_\Lam)=  f\ast 1 -  g\ast (f \ast \delta_\Lam) =\int f-\int g,
	\]
	and this proves \ref{lemC1.6.1}. 

	It also follows that 
	$\int g = \int f$ if and only if $f \ast h = 0$. But the
	convolution of two nonnegative functions cannot be everywhere zero,
	unless at least one of the functions vanishes a.e. Observe that $f$ cannot vanish a.e., 
	since $f+\Lam$ is a tiling. Hence $f \ast h = 0$ if and only if $h=0$ a.e., which means
	that  $g + \Lam$ is a tiling. This proves \ref{lemC1.6.2}. 
\end{proof}

\subsection{Definition}
If $W \subset \R^d$ is a bounded, measurable set, then we define
\begin{equation}
	\label{eqC2.1}
	\Delta(W) := \{x \in \R^d: \mes(W \cap (W + x)) > 0\}.
\end{equation}
The set $\Delta(W)$ is a bounded open set, symmetric with respect to the origin.

One can think of the set $\Delta(W)$ as the measure-theoretic analog of the
set of differences $W-W$. In particular, one can check that if $W$ is an open set then  $\Delta(W) = W-W$.
In general we  have $\Delta(W) \subset W-W$, but this inclusion can be strict. 

The following fact is easy to verify:
\begin{lem}
	\label{lemC2.3}
	Let $W$ be a bounded, measurable set in $\R^d$, 
	and $\Lam$ be a countable set in $\R^d$. Then
	$W + \Lambda$ is a  packing if and only if 
	$(\Lam-\Lam) \setminus \{0\} \subset \Delta(W)^\complement$.
\end{lem}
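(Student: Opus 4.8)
The plan is to reduce the packing condition for the family of translates $W + \lambda$ to a statement about pairwise disjointness, and then convert each pairwise overlap into a membership statement in $\Delta(W)$ via the translation invariance of Lebesgue measure.

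First I would recall that, since $f = \1_W$, the packing condition \eqref{eqC3.2} reads $\sum_{\lambda\in\Lambda} \1_W(x-\lambda)\le 1$ for a.e.\ $x$, and I would argue that this is equivalent to the translates $W+\lambda$ $(\lambda\in\Lambda)$ being pairwise disjoint up to measure zero. One direction is immediate: if two translates overlapped in a set of positive measure, the sum would equal at least $2$ there, contradicting the inequality. For the converse I would observe that the set $\{x:\sum_\lambda \1_W(x-\lambda)\ge 2\}$ is contained in the union $\bigcup_{\lambda\ne\lambda'}(W+\lambda)\cap(W+\lambda')$ taken over distinct pairs in $\Lambda$; if every pairwise intersection has measure zero, then this is a \emph{countable} union of null sets and hence null, so the packing inequality holds a.e. This is the one place where the countability of $\Lambda$ is genuinely used.

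Next, for any two distinct points $\lambda,\lambda'\in\Lambda$, I would compute the overlap of the corresponding translates. Writing $y=x-\lambda$, one checks set-theoretically that
\[
(W+\lambda)\cap(W+\lambda') = \big( W \cap (W+(\lambda'-\lambda)) \big) + \lambda,
\]
so by translation invariance of Lebesgue measure
\[
\mes\big( (W+\lambda)\cap(W+\lambda') \big) = \mes\big( W \cap (W+(\lambda'-\lambda)) \big).
\]
By the definition \eqref{eqC2.1} of $\Delta(W)$, the right-hand side is positive precisely when $\lambda'-\lambda\in\Delta(W)$. Hence $W+\lambda$ and $W+\lambda'$ are disjoint up to measure zero if and only if $\lambda'-\lambda\in\Delta(W)^\complement$. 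Note that only measurability of $W$ is needed here, as the identity above and the invariance of measure do not require $W$ to be open.

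Combining the two steps, $W+\Lambda$ is a packing if and only if $\lambda'-\lambda\in\Delta(W)^\complement$ for every pair of distinct points $\lambda,\lambda'\in\Lambda$, which is exactly the asserted condition $(\Lambda-\Lambda)\setminus\{0\}\subset\Delta(W)^\complement$. The only point demanding any care is the reduction of the a.e.\ packing inequality to pairwise disjointness, where one must invoke the countability of $\Lambda$ to absorb the exceptional null sets; everything else is a one-line translation-invariance computation, which is why the statement is flagged as easy to verify.
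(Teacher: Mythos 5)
Your proof is correct: the paper states \lemref{lemC2.3} without proof (it is introduced only as ``easy to verify''), and your argument---reducing the a.e.\ packing inequality to pairwise a.e.-disjointness of the translates, using the countability of $\Lambda$ to absorb the exceptional null sets, and then converting each overlap into the condition $\lambda'-\lambda\in\Delta(W)$ via the identity $(W+\lambda)\cap(W+\lambda')=\bigl(W\cap(W+(\lambda'-\lambda))\bigr)+\lambda$ and translation invariance of Lebesgue measure---is exactly the standard verification the authors intend. There is nothing to add.
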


\subsection{Orthogonal packing regions}
\label{subsec:orthpackreg}

If $\Omega$ and $W$ are two  bounded, measurable sets in $\R^d$, then we will say
that $W$ is  an \emph{orthogonal packing region} for $\Omega$ if we have
\begin{equation}
	\label{eqC2.2}
	\Delta(W) \cap \zft{\Omega} = \emptyset.
\end{equation}

The notion of an orthogonal packing region was introduced in the paper
\cite{LRW00} but in less generality. In \cite{LRW00} it was additionally assumed 
that the boundary of $W$ is a set of measure zero, and instead of \eqref{eqC2.2}
the condition $(W^\circ-W^\circ)  \cap \zft{\Omega} = \emptyset$ was used
as the definition of  an orthogonal packing region, where $W^\circ$ denotes
the interior of $W$. In the present paper we use the definition \eqref{eqC2.2} to extend the notion
of an orthogonal packing region to the situation where $W$ is any  bounded, measurable set in $\R^d$.
It is easy to verify that the new definition coincides with the one given in  
\cite{LRW00} in the case when the boundary of $W$ is a set of measure zero.

The reason for the name ``orthogonal packing region'' is the fact that
if a system of exponentials $E(\Lam)$ is orthogonal in $L^2(\Om)$, and
if $W$ is  an orthogonal packing region for $\Omega$, then $W+\Lam$ is a packing.
This follows from \eqref{eqP1.2}, \eqref{eqC2.2} and \lemref{lemC2.3}.

\subsection{Convex polytopes}
By a \emph{convex polytope} $\Omega \subset \R^d$ we mean a compact
set which is the convex hull of a finite number of points.
By a \emph{facet} of $\Omega$  we refer to a $(d-1)$-dimensional face of $\Omega$.

We say that $\Omega$ is \emph{centrally symmetric} if $-\Omega$ is a translate
of $\Omega$. In this case, there is a unique point $x \in \R^d$ such that 
$-\Omega + x = \Omega - x$, and $\Omega$ is said to be symmetric with respect to the point $x$.

A convex polytope $\Omega \subset \R^d$  will be called  \emph{decomposable} if $\Omega$
can be mapped by an invertible affine transformation to a cartesian product $A \times B$
of two convex polytopes $A \subset \R^n$,  $B \subset \R^m$  $(n,m \geq 1)$ where $n+m=d$.
(Usually such a polytope is said to be ``directly decomposable'',  see e.g.\ \cite[Section 3.3.2]{Sch14},
but in this paper we use the term ``decomposable'' for brevity.)

If $\Omega$ is not decomposable, then we  say that $\Omega$ is \emph{indecomposable}.


\section{Kolountzakis' theorem} \label{secC4}

In this section we discuss a result of Kolountzakis, which gives a method for proving in certain
situations that  the spectrality of a product set $\Omega  = A \times B$ implies
the spectrality of the factors $A,B$. We give a simple proof of the result in a stronger form.

\subsection{}
	Let $A \subset \R^n$ and $B \subset \R^m$   be two
	bounded, measurable sets. 
\begin{definition}
	\label{defC4.1}
	Let $\Lam \subset \R^n \times \R^m$ be a spectrum for the product $\Omega  = A \times B$, and let
	$W \subset \R^n$ be a bounded, measurable set. We say that $\Lam$ is \emph{$W$-compatible} if the condition
\begin{equation}
	\label{eqC4.2}
	(\Lam-\Lam) \setminus \{0\} \subset (\Delta(W)^\complement \times \R^m) \cup (\R^n \times \zeros(\ft{\1}_B))
\end{equation}
is satisfied. 
\end{definition}

The condition \eqref{eqC4.2} can be  equivalently stated as follows:  given any pair
of distinct points $(u,v)$ and $(u',v')$  in $\Lam$,  the set
$(W+u) \cap (W+u')$ cannot have positive measure unless
the exponential functions $e_v$ and $e_{v'}$ are orthogonal in $L^2(B)$.

Notice that every spectrum $\Lam$ of $\Omega = A \times B$ satisfies the condition
\begin{equation}
	\label{eqC4.2.2}
	(\Lam-\Lam) \setminus \{0\} \subset \zft{\Omega} = (\zft{A} \times \R^m) \cup (\R^n \times \zft{B}),
\end{equation}
which follows from \eqref{eqP1.2}  and the fact that
$\ft{\1}_\Om = \ft{\1}_A \otimes \ft{\1}_B$.
This implies that condition \eqref{eqC4.2} holds whenever $W$ is an orthogonal packing region for $A$.

\subsection{}
The following result is basically due to Kolountzakis \cite{Kol16}.

\begin{thm}
	\label{thmC4.3}
	Let $A \subset \R^n$ and $B \subset \R^m$   be two
	bounded, measurable sets. Assume that  $\Omega  = A \times B$
	is a spectral set, and let $\Lam$ be a spectrum for $\Omega$.
	Suppose that there exists a bounded, measurable set
	$W \subset \R^n$, $|W| \geq |A|^{-1}$, such that $\Lam$ is  $W$-compatible.
	Then
	\begin{enumerate-math}
		\item \label{thmC4.3.1} $|W| = |A|^{-1}$;
		\item \label{thmC4.3.2} $B$ is a spectral set.
	\end{enumerate-math}
\end{thm}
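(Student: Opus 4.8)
The plan is to compare two functions against the spectrum $\Lam$ using the tiling--packing dichotomy of Lemmas~\ref{lemC1.7} and~\ref{lemC1.6}. Write $f_A := \sqft{A}$ on $\R^n$ and $f_B := \sqft{B}$ on $\R^m$. Since $\ft{\1}_\Om = \ft{\1}_A \otimes \ft{\1}_B$ and $|\Om| = |A|\,|B|$, the function $f := \sqft{\Om}$ factors as $f = f_A \otimes f_B$, and because $\Lam$ is a spectrum for $\Om$, part~\ref{lemC1.7.1} of \lemref{lemC1.7} says that $f + \Lam$ is a tiling. By Plancherel $\int f_A = |A|^{-1}$ and $\int f_B = |B|^{-1}$, so $\int f = |A|^{-1}\,|B|^{-1}$. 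Against this tiling I would test the function
\[
g := \1_W \otimes f_B, \qquad \int g = |W|\,|B|^{-1} \ge |A|^{-1}\,|B|^{-1} = \int f,
\]
the inequality being exactly the hypothesis $|W| \ge |A|^{-1}$.

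The heart of the argument is to show that $g + \Lam$ \emph{is a packing}. For fixed $x \in \R^n$, the value $\sum_{(u,v)\in\Lam} \1_W(x-u)\,f_B(y-v)$ equals $\sum f_B(y-v)$ taken over those pairs $(u,v)\in\Lam$ with $x-u \in W$. If $(u,v)$ and $(u',v')$ are two such distinct pairs, then $x \in (W+u)\cap(W+u')$. For each pair with $\mes\big((W+u)\cap(W+u')\big)=0$ the corresponding set of $x$ is null; discarding the countable union of these null sets, we may assume $\mes\big((W+u)\cap(W+u')\big) > 0$, which by the definition \eqref{eqC2.1} means $u - u' \in \Delta(W)$. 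The $W$-compatibility condition \eqref{eqC4.2} then forces $v - v' \in \zeros(\ft{\1}_B)$; in particular $v \ne v'$ (otherwise $0 \in \zeros(\ft{\1}_B)$, impossible since $\ft{\1}_B(0)=|B|>0$). Hence the contributing $v$'s are distinct and form a set $V_x$ with $(V_x - V_x)\setminus\{0\} \subset \zeros(\ft{\1}_B)$, so by part~\ref{lemC1.7.2} of \lemref{lemC1.7} the translates $f_B + V_x$ are a packing, i.e.\ $\sum_{v \in V_x} f_B(y-v) \le 1$ for a.e.\ $y$. As this holds for a.e.\ $x$, Fubini yields $\sum_{\lam\in\Lam} g(\,\cdot\, - \lam) \le 1$ a.e., as required.

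With the packing in hand, part~\ref{lemC1.6.1} of \lemref{lemC1.6} gives $\int g \le \int f$, which together with $\int g \ge \int f$ forces $\int g = \int f$, i.e.\ $|W| = |A|^{-1}$; this is part~\ref{thmC4.3.1}. Then part~\ref{lemC1.6.2} of \lemref{lemC1.6} upgrades the packing to a tiling, $\sum_{(u,v)\in\Lam} \1_W(x-u)\,f_B(y-v) = 1$ a.e. Slicing this identity by Fubini, for a.e.\ fixed $x$ we obtain $\sum_{v \in V_x} f_B(y-v) = 1$ for a.e.\ $y$, so $f_B + V_x$ is a tiling; by part~\ref{lemC1.7.1} of \lemref{lemC1.7} this identifies $V_x$ as a spectrum for $B$, and therefore $B$ is spectral, proving part~\ref{thmC4.3.2}. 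I expect the main obstacle to be the packing step, and specifically the measure-theoretic passage from ``the translates $W+u$ and $W+u'$ overlap'' to ``$u-u' \in \Delta(W)$'': it is essential to work with the positive-measure difference set $\Delta(W)$ rather than the naive $W-W$, and to absorb the countably many null overlaps into a single exceptional set of measure zero before invoking $W$-compatibility.
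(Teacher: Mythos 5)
Your proposal is correct and takes essentially the same route as the paper's own proof: the identical packing/tiling comparison of $\1_W \otimes \sqft{B}$ against $\sqft{A} \otimes \sqft{B}$ via Lemmas~\ref{lemC1.7} and~\ref{lemC1.6}, including the same countable-null-set argument that converts $W$-compatibility into the packing property and the same Fubini slicing that extracts a spectrum for $B$. The only cosmetic difference is that the paper works with $\1_{-W}$, so that the fibres are exactly the cut-and-project sets $\Gamma(\Lam, W+x)$ of Definition~\ref{defD4.1}; this is immaterial since $\Delta(W)$ is symmetric about the origin.
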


This result was formulated in \cite[Theorem 2]{Kol16} in the special case when
$W$ is assumed to be an orthogonal packing region for $A$. This assumption 
implies that every spectrum $\Lam$ of $\Omega   = A \times B$
is  $W$-compatible. So in this case the result says:\footnote{Strictly speaking, in \cite{Kol16} the notion of an orthogonal 
packing region for $\Omega$ was defined using the condition $(W-W) \cap \zft{\Omega} = \emptyset$,
so formally a special case of \corref{corD4.3.1} was proved in \cite{Kol16}.
However the proof in \cite{Kol16} can be easily extended to the situation in the present
paper, where an orthogonal packing  region for $\Omega$ is defined using  condition \eqref{eqC2.2}.
Moreover, if $W$ is an open set, then the two definitions of an orthogonal packing region 
used in \cite{Kol16} and in the present paper coincide.}

\begin{corollary}[\cite{Kol16}]
	\label{corD4.3.1}
	Let $\Omega  = A \times B$ be the product of two
	bounded, measurable sets $A \subset \R^n$ and $B \subset \R^m$.
	Suppose that $A$ has an orthogonal packing region $W$,
	$|W| \geq |A|^{-1}$. If $\Omega$
	is spectral, then  conclusions  \ref{thmC4.3.1} and \ref{thmC4.3.2} in \thmref{thmC4.3} are true.
\end{corollary}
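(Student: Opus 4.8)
The plan is to reduce the corollary directly to \thmref{thmC4.3} by verifying that the hypothesis of that theorem is automatically satisfied here: I claim that whenever $W$ is an orthogonal packing region for $A$, \emph{every} spectrum $\Lam$ of $\Omega = A \times B$ is $W$-compatible in the sense of \defref{defC4.1}. Once this claim is established, the desired conclusions follow at once from \thmref{thmC4.3}, since the measure condition $|W| \geq |A|^{-1}$ is part of the hypothesis. The crux, therefore, is not to re-prove the theorem but to check that the $W$-compatibility condition \eqref{eqC4.2} comes for free from the orthogonal-packing-region assumption.

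To verify the claim I would first unpack the two relevant set inclusions. On one hand, the definition \eqref{eqC2.2} of an orthogonal packing region says that $\Delta(W) \cap \zft{A} = \emptyset$, which is equivalent to $\zft{A} \subset \Delta(W)^\complement$, and hence $\zft{A} \times \R^m \subset \Delta(W)^\complement \times \R^m$. On the other hand, any spectrum $\Lam$ of $\Omega$ satisfies the orthogonality relation \eqref{eqP1.2}; combined with the product structure $\ft{\1}_\Om = \ft{\1}_A \otimes \ft{\1}_B$, this yields the inclusion \eqref{eqC4.2.2}, namely $(\Lam-\Lam) \setminus \{0\} \subset (\zft{A} \times \R^m) \cup (\R^n \times \zft{B})$.

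Combining these two observations is the entire point: replacing the first term $\zft{A} \times \R^m$ in \eqref{eqC4.2.2} by the larger set $\Delta(W)^\complement \times \R^m$ gives precisely the $W$-compatibility condition \eqref{eqC4.2}. Thus every spectrum $\Lam$ of $\Omega$ is $W$-compatible, and an application of \thmref{thmC4.3}, whose remaining hypothesis $|W| \geq |A|^{-1}$ is assumed, delivers both conclusions \ref{thmC4.3.1} and \ref{thmC4.3.2}.

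I do not anticipate any genuine obstacle in carrying this out; the content of the corollary is exactly the observation, already flagged in the remark following \defref{defC4.1}, that an orthogonal packing region for $A$ supplies $W$-compatibility automatically. In effect the corollary is the special case of \thmref{thmC4.3} in which the $W$-compatibility hypothesis need not be imposed by hand, because it is forced by the geometry of the zero set $\zft{A}$ lying inside $\Delta(W)^\complement$.
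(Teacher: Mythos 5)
Your proposal is correct and matches the paper's own reasoning exactly: the paper also derives the corollary from \thmref{thmC4.3} by noting (in the discussion following \defref{defC4.1}) that every spectrum $\Lam$ of $\Omega = A \times B$ satisfies \eqref{eqC4.2.2}, so the orthogonal-packing-region condition $\zft{A} \subset \Delta(W)^\complement$ upgrades this to the $W$-compatibility condition \eqref{eqC4.2} automatically. No gaps; this is the intended argument.
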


However the proof  in  \cite{Kol16} of this result in fact uses the assumption that $W$ is an orthogonal 
packing region for $A$ only to ensure that condition \eqref{eqC4.2} holds. So actually  \thmref{thmC4.3}
follows from that proof.

If $A \subset \R$ is an interval, then any interval $W$ of length 
$|W| = |A|^{-1}$ is an orthogonal packing region for $A$. Hence
\corref{corD4.3.1} can be used in this case to conclude that the spectrality of $\Omega  = A \times B$ implies
the spectrality of $B$.  This yields \thmref{thmA1.0}.

In general, however, it is not obvious how to use \thmref{thmC4.3} in order to prove
that the spectrality of a given product set $\Omega  = A \times B$ implies
the spectrality of the factors $A,B$. Indeed, to apply this theorem
one must first establish the existence of 
a set $W \subset \R^n$, $|W| \geq |A|^{-1}$,
and of a spectrum $\Lam$ for $\Omega$,
such that $\Lam$ is  $W$-compatible. This would imply the spectrality of $B$ by
part \ref{thmC4.3.2} of the theorem. Secondly, to conclude that also $A$ must
be spectral, one must show in addition that if this is not the case then $W$ can
be chosen such that $|W| > |A|^{-1}$. Then part \ref{thmC4.3.1}
of  \thmref{thmC4.3} would lead to a contradiction.

Kolountzakis showed \cite[pp.\ 107--108]{Kol16} that if $A \subset \R$
 is the union of two intervals then $A$ admits 
an orthogonal packing region $W$, such that $|W| = |A|^{-1}$ if $A$ is spectral,
while $|W| > |A|^{-1}$ otherwise. Thus
\corref{corD4.3.1} can be applied in this situation, to conclude
that the spectrality of $\Omega  = A \times B$ implies
the spectrality of both $A$ and $B$, see \cite[Corollary 5]{Kol16}.

\subsection{}
Kolountzakis' proof involves a construction which is often  referred to as ``cut-and-project''.

Let $p_1$ and $p_2$ denote  the projections from $\R^n \times \R^m$
onto $\R^n$ and $\R^m$ respectively, that is, $p_1(u,v)=u$ and $p_2(u,v)=v$.

\begin{definition}
\label{defD4.1}
Assume that
 $\Lam \subset \R^n \times \R^m$ is a countable set, and that $W \subset \R^n$ is a bounded, measurable set. Then the set
\begin{equation}
	\label{eqD4.1.1}
 \Gamma(\Lam, W) := p_2(\Lam \cap (W \times \R^m)) \subset \R^m
\end{equation}
will be called \emph{the cut-and-project set based on $\Lam$ and $W$}
(see Figure \ref{fig:cutandproject}).
\end{definition}


\begin{figure}[htb]
\centering
\begin{tikzpicture}[scale=0.375]

\fill [fill=gray!35] (7,0) -- (11,0) -- (11,18) --  (7,18) -- (7,0);

\draw [loosely dashed] (7,0) -- (7,18);
\draw [loosely dashed] (11,0) -- (11,18);

\draw[-stealth, densely dotted, black!75!white] (8,4) -- (0.25,4);
\draw[-stealth, densely dotted, black!75!white] (10,6) -- (0.25,6);
\draw[-stealth, densely dotted, black!75!white] (9,10) -- (0.25,10);
\draw[-stealth, densely dotted, black!75!white] (8,11) -- (0.25,11);
\draw[-stealth, densely dotted, black!75!white] (9,16) -- (0.25,16);

\fill (0,4) circle (0.15);
\fill (0,6) circle (0.15);
\fill (0,10) circle (0.15);
\fill (0,11) circle (0.15);
\fill (0,16) circle (0.15);

\fill (8,4) circle (0.1);
\fill (10,6) circle (0.1);
\fill (9,10) circle (0.1);
\fill (8,11) circle (0.1);
\fill (9,16) circle (0.1);

\fill (14,3) circle (0.1);
\fill (5,7) circle (0.1);
\fill (12,8) circle (0.1);
\fill (15,9) circle (0.1);
\fill (4,12) circle (0.1);
\fill (5,14) circle (0.1);
\fill (13,15) circle (0.1);

\draw[-stealth]  (0, 0) -- (18, 0);
\draw[-stealth]  (0, 0) -- (0, 18);
\draw (18,0) node[anchor=west] {$\R^n$};
\draw (0,18) node[anchor=south] {$\R^m$};

\draw[line width=0.5mm] (7,0) -- (11,0);
\draw (9,-1) node {\small $W$};

\draw (-1,10) node[anchor=east] {\small $\Gamma(\Lam, W)$};

\end{tikzpicture}
\caption{The cut-and-project set  $\Gamma(\Lam, W)$.}
\label{fig:cutandproject}
\end{figure}


(It should be remarked that in the literature, by a ``cut-and-project'' construction one usually refers to the special
situation where the set $\Lam$ is assumed to be a lattice. However we do not make such an assumption in \defref{defD4.1}).

In many situations when working with cut-and-project sets, it is natural to impose the extra
assumption that the projection $p_2$ is a one-to-one map when
restricted to the set $\Lam \cap (W \times \R^m)$. 
This  means that for every $v \in \Gamma(\Lam, W)$ there exists a unique
$u \in W$ such that the point $(u,v)$ belongs to $\Lam$.

Now suppose that  $\Omega  = A \times B$ is the product of
two	bounded, measurable sets $A \subset \R^n$ and $B \subset \R^m$.
Kolountzakis observed that the assumptions in \thmref{thmC4.3} imply that
 for a.e.\ $x \in \R^n$  the projection $p_2$ is one-to-one on the set 
$\Lam \cap ((W+x) \times \R^m)$, and its image $\Gamma(\Lam, W + x)$ constitutes a set of frequencies
 in $\R^m$ whose corresponding exponential system $E(\Gamma(\Lam, W + x))$
is orthogonal in $L^2(B)$. Moreover, he showed that there exist choices of $x$
such that in addition, the so-called \emph{upper uniform density} of the set
$\Gamma(\Lam, W + x) $ is bounded from below by values arbitrarily close to $|A| \cdot |B| \cdot |W|$
(for the definition of the upper uniform density, see \cite[p.\ 100]{Kol16}).
Finally, Kolountzakis proved a result of independent interest \cite[Theorem 1]{Kol16}
which implies that the latter fact suffices to establish the conclusion
of \thmref{thmC4.3}.

\subsection{}
In what follows, we give a simple proof of \thmref{thmC4.3}.
The proof moreover establishes a new conclusion
about the structure of the spectrum $\Lam$:

\begin{thm}
	\label{thmC4.4}
	Under the same assumptions as in \thmref{thmC4.3}, the following conclusion is true:
		for a.e.\ $x \in \R^n$ the projection $p_2$ is one-to-one on the set 
		$\Lam \cap ((W+x) \times \R^m)$ and its image 
		$\Gamma(\Lam, W + x) $ is a spectrum for the set $B$.
\end{thm}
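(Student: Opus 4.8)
The plan is to read everything through the tiling--packing dictionary of \lemref{lemC1.7} and \lemref{lemC1.6}, applied on $\R^n \times \R^m$. Since $\ft{\1}_\Omega = \ft{\1}_A \otimes \ft{\1}_B$ and $|\Omega| = |A|\,|B|$, the function attached to $\Omega$ factors as $\sqft{\Omega} = f_A \otimes f_B$, where I write $f_A := \sqft{A}$ and $f_B := \sqft{B}$. By \lemref{lemC1.7}\ref{lemC1.7.1}, the hypothesis that $\Lam$ is a spectrum for $\Omega$ means exactly that $f_A \otimes f_B + \Lam$ is a tiling of $\R^n \times \R^m$. I also record the values $\int f_A = |A|^{-1}$ and $\int f_B = |B|^{-1}$, which follow from Plancherel's theorem, and abbreviate $\Gamma_x := \Gamma(\Lam, W+x)$.

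The first step is to establish, for a.e.\ $x \in \R^n$, both the injectivity of $p_2$ on the slice $\Lam \cap ((W+x)\times\R^m)$ and the orthogonality of $E(\Gamma_x)$ in $L^2(B)$. The device here is the elementary identity $|(u-W)\cap(u'-W)| = |W\cap(W+(u-u'))|$, obtained by a reflection and a translation, which shows that the set of $x$ for which two given points $(u,v),(u',v')$ of $\Lam$ both lie in the slice $(W+x)\times\R^m$ has positive measure exactly when $u-u' \in \Delta(W)$, and is a null set when $u-u' \in \Delta(W)^\complement$. Since $\Lam$ is countable, discarding this null set over all pairs with $u-u' \in \Delta(W)^\complement$ leaves a set of full measure. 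For $x$ in this set, any two distinct points of the slice satisfy $u-u' \in \Delta(W)$; the $W$-compatibility condition \eqref{eqC4.2} then forces $(u-u',v-v') \in \R^n \times \zft{B}$, i.e.\ $v-v' \in \zft{B}$. As $0 \notin \zft{B}$ (because $\ft{\1}_B(0) = |B| > 0$), this rules out $v = v'$, which is the injectivity, and for distinct $v,v' \in \Gamma_x$ it gives $v - v' \in \zft{B}$, which is the orthogonality of $E(\Gamma_x)$ in $L^2(B)$. By \lemref{lemC1.7}\ref{lemC1.7.2}, this orthogonality is equivalent to $f_B + \Gamma_x$ being a packing.

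The second step turns these fibrewise packings into a single packing in the product, and then invokes the sharp mass comparison. Set $g := \1_{-W} \otimes f_B$, a nonnegative function in $L^1(\R^n\times\R^m)$. Unwinding definitions, the packing inequality for $g + \Lam$ at $(x,y)$ reads $\sum_{(u,v)\in\Lam,\, u \in W+x} f_B(y-v) \le 1$; for a.e.\ $x$ the injectivity from Step~1 identifies the left-hand side with $\sum_{v\in\Gamma_x} f_B(y-v)$, which is $\le 1$ by the packing just established. Hence, by Tonelli, $g + \Lam$ is a packing on $\R^n \times \R^m$. Now apply \lemref{lemC1.6} with $f = f_A \otimes f_B$ (a tiling) and this $g$ (a packing): part~\ref{lemC1.6.1} gives $|W|\,|B|^{-1} = \int g \le \int f = |A|^{-1}|B|^{-1}$, so $|W| \le |A|^{-1}$; combined with the hypothesis $|W| \ge |A|^{-1}$ this yields $|W| = |A|^{-1}$ and hence $\int g = \int f$. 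Part~\ref{lemC1.6.2} then upgrades $g + \Lam$ from a packing to a \emph{tiling}.

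Finally, the tiling of $g + \Lam$ says that $\sum_{(u,v)\in\Lam,\, u\in W+x} f_B(y-v) = 1$ for a.e.\ $(x,y)$. By Fubini, for a.e.\ $x$ this equality holds for a.e.\ $y$; intersecting with the full-measure set of $x$ from Step~1, the left-hand side equals $\sum_{v\in\Gamma_x} f_B(y-v)$, so $f_B + \Gamma_x$ is a tiling, that is, $\Gamma_x$ is a spectrum for $B$ by \lemref{lemC1.7}\ref{lemC1.7.1}. This is exactly the assertion of the theorem (and it recovers both conclusions of \thmref{thmC4.3} along the way). I expect the only delicate point to be Step~1 --- the passage from the pointwise compatibility condition \eqref{eqC4.2} to an a.e.-in-$x$ statement, via the autocorrelation identity and the countable union of null sets; once the product packing $g+\Lam$ is in hand, \lemref{lemC1.6} delivers both $|W| = |A|^{-1}$ and the tiling in one clean stroke.
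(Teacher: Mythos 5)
Your proposal is correct and follows essentially the same route as the paper's own proof: the same countable-union-of-null-sets argument giving injectivity of $p_2$ and orthogonality of $E(\Gamma(\Lam,W+x))$ for a.e.\ $x$, the same packing function $\1_{-W}\otimes f_B$ compared against the tiling $f_A\otimes f_B + \Lam$ via \lemref{lemC1.6} to get $|W|=|A|^{-1}$ and upgrade the packing to a tiling, and the same final Fubini step showing $f_B+\Gamma(\Lam,W+x)$ tiles, hence is a spectrum by \lemref{lemC1.7}. The only cosmetic difference is that you excise the null sets indexed by pairs whose first-coordinate difference lies in $\Delta(W)^\complement$ before invoking \eqref{eqC4.2}, whereas the paper invokes \eqref{eqC4.2} first to identify those pairs; the two formulations are equivalent.
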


In other words, the new result is that for a.e.\ $x \in \R^n$, not only the exponential
system $E(\Gamma(\Lam, W + x))$ is orthogonal in $L^2(B)$, but this system
is also complete in the space.

\begin{proof}[Proof of both \thmref{thmC4.3} and \thmref{thmC4.4}]
We divide the proof into several steps. 

\emph{Step 1}. 
We  show that for a.e.\ $x \in \R^n$, the map $p_2$ is one-to-one
on the set $\Lam \cap ((W+x) \times \R^m)$, and the exponential system
$E(\Gamma(\Lam, W + x))$ is orthogonal in $L^2(B)$.

 This amounts to showing that 
the set
\begin{equation}
\label{eqC4.4.1.0}
\begin{aligned}
\{x \in \R^n : \; &\text{there exist distinct points $(u,v)$, $(u',v')$ in $\Lam$}\\
&\text{such that $u, u' \in W+x$ but $v' - v \notin \zft{B}$}\}
\end{aligned}
\end{equation}
has measure zero.
To show this, let $(u,v)$, $(u',v')$ be  two distinct points in $\Lam$ such
that $v' - v \notin \zft{B}$. Since $\Lam$ was assumed to be  $W$-compatible,
it follows from condition \eqref{eqC4.2} that $u' - u \notin \Delta(W)$. Using the definition
\eqref{eqC2.1} of the set $\Delta(W)$, this implies that
\[
\mes \{x \in \R^n: u, u' \in W+x\} = \mes ((W-u) \cap (W-u')) = 0.
\]
Since  $\Lam$ is a countable set, this shows that the set in \eqref{eqC4.4.1.0}
can be decomposed into a countable union of sets of measure zero, and hence this set itself also has measure zero, as we had to prove.

In what follows, we denote $f_A := \sqft{A}$ and $f_B := \sqft{B}$.

\emph{Step 2}. 
We show that $(\1_{-W}\otimes f_B)+\Lam$ is a packing. 

This means that the sum
	\begin{equation}
	\label{eqC4.4.1.1}
	\sum_{(u,v)\in\Lam}\1_{-W}(x-u)f_B(y-v)
	\end{equation}
	should be not greater than $1$ for a.e.\ $(x,y)\in\R^n\times\R^m$.

	Assume that $x \in \R^n$ is a point lying outside the set in \eqref{eqC4.4.1.0}. 
	Since the map $p_2$ is one-to-one on the set $\Lam \cap ((W+x) \times \R^m)$, the sum in 
	\eqref{eqC4.4.1.1} is equal to
	\begin{equation}
	\label{eqC4.4.1.2}
	\sum_{v\in\Gamma(\Lam, W + x) }f_B(y-v).
	\end{equation}
	Since the system  
	$E(\Gamma(\Lam, W + x))$ is orthogonal in $L^2(B)$, it follows from
	part \ref{lemC1.7.2} of \lemref{lemC1.7} that the sum in 
	\eqref{eqC4.4.1.2} is not greater than $1$
 for a.e.\ $y \in \R^m$. The claim thus follows from Fubini's theorem.

\emph{Step 3}. 
We show that $(f_A \otimes f_B)+\Lam$ is a tiling. 

This is a  consequence of part \ref{lemC1.7.1} of \lemref{lemC1.7}, since
$f_A \otimes f_B = \sqft{\Om}$ and the set $\Lam$ is a spectrum for $\Om$.

\emph{Step 4}. 
We show that $|W|=|A|^{-1}$.

The fact that $(f_A\otimes f_B) +\Lam$ is a tiling and $(\1_{-W}\otimes f_B)+\Lam$ is a packing,
allows us to use \lemref{lemC1.6}. It follows from part \ref{lemC1.6.1} of the lemma that
	\begin{equation}	
	\label{eqC4.4.10.1}
	 |W| \cdot |B|^{-1}=\iint_{\R^n \times \R^m} \1_{-W}\otimes f_B \leq \iint_{\R^n \times \R^m} f_A\otimes f_B= |A|^{-1} \cdot |B|^{-1},
	\end{equation}
	and so we obtain $|W|\leq |A|^{-1}$. However, we  assumed a priori that $|W|\geq |A|^{-1}$, so the equality  $|W|=|A|^{-1}$ must hold. This establishes  part \ref{thmC4.3.1} of \thmref{thmC4.3}.

\emph{Step 5}. 
We show that $(\1_{-W}\otimes f_B)+\Lam$ is a tiling. 

Indeed, since $|W|=|A|^{-1}$ we see that the inequality in
\eqref{eqC4.4.10.1} is in fact an equality. Hence the claim follows from
 part \ref{lemC1.6.2} of \lemref{lemC1.6}.

\emph{Step 6}. 
We  show that for a.e.\ $x \in \R^n$, the 
set $\Gam(\Lam, W+x)$ is a spectrum for $B$.

We have seen that, for a.e.\ $(x,y)\in\R^n\times\R^m$, the sums in \eqref{eqC4.4.1.1} 
and \eqref{eqC4.4.1.2} coincide, and
the sum in \eqref{eqC4.4.1.1} is equal to $1$ since $(\1_{-W}\otimes f_B)+\Lam$ is a tiling.
So a further application of Fubini's theorem yields that for 
a.e.\ $x \in \R^n$ there is a set $Y(x) \subset \R^m$ of full measure, such that
the sum in \eqref{eqC4.4.1.2} is equal to $1$ for all $y \in Y(x)$.
Hence for a.e.\ $x \in \R^n$ we have that $f_B + \Gamma(\Lam, W + x)$ is a tiling, 
and we conclude from part \ref{lemC1.7.1} of \lemref{lemC1.7} 
that the set $\Gam(\Lam, W+x)$ is a spectrum for $B$.
This establishes \thmref{thmC4.4} and in particular also part \ref{thmC4.3.2} of \thmref{thmC4.3}.
\end{proof}


\section{Orthogonal exponentials and relatively dense sets} \label{secC5}

In this section our main goal is to prove \thmref{thmC5.4}, which says that if a product set
$\Omega=A\times B$ is spectral, and if the set $A$ is a convex polytope 
in $\R^n$, then $A$ must be centrally symmetric and have centrally symmetric facets.
We also prove \thmref{thmC5.5}.

\subsection{}
A set $\Lam \subset \R^d$  is said to be \emph{relatively dense} if
there is $R = R(\Lam) >0$ such that every ball of radius $R$ contains at least one point from $\Lam$.

It is known that if $\Lam$ is a spectrum for some bounded, measurable set
$\Omega  \subset \R^d$ then $\Lam$
must be a relatively dense set (see, for example, \cite[Section 2C]{GL17}).

\begin{lem}
	\label{lemC5.3}
	Let $A \subset \R^n$ and $B \subset \R^m$ be two bounded, measurable sets, and suppose
	that their product $\Om=A\times B$ is a spectral set.
	Then there exists a relatively dense set $\Gamma \subset \R^n$
	such that the system of exponentials $E(\Gamma)$ is orthogonal in $L^2(A)$.
\end{lem}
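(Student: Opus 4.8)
The plan is to produce $\Gam$ by intersecting a spectrum of $\Omega$ with a thin horizontal slab and projecting to $\R^n$. I would fix a spectrum $\Lam\subset\R^n\times\R^m$ for $\Omega=A\times B$; then $\Lam$ is uniformly discrete and relatively dense, and by \eqref{eqC4.2.2} it satisfies $(\Lam-\Lam)\setminus\{0\}\subset(\zft{A}\times\R^m)\cup(\R^n\times\zft{B})$. Since $\ft{\1}_B$ is continuous with $\ft{\1}_B(0)=|B|>0$, the constant $\chi(B)$ of \eqref{eqP1.3} is positive, and I fix $\rho$ with $0<\rho<\tfrac12\chi(B)$. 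For $w\in\R^m$ I set $\Gam_w:=p_1\big(\Lam\cap(\R^n\times(B_\rho+w))\big)$, the projection to $\R^n$ of the points of $\Lam$ whose $\R^m$-coordinate lies within $\rho$ of $w$.

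The orthogonality of $E(\Gam_w)$ is the easy half. If $(u,v),(u',v')\in\Lam$ have $v,v'\in B_\rho+w$, then $|v-v'|<2\rho<\chi(B)$, so $v-v'\notin\zft{B}$; the inclusion above then forces $u-u'\in\zft{A}$, and $u\ne u'$ because $0\notin\zft{A}$. Hence $p_1$ is injective on this slab and $(\Gam_w-\Gam_w)\setminus\{0\}\subset\zft{A}$, i.e.\ $E(\Gam_w)$ is orthogonal in $L^2(A)$, for every $w$. This is exactly Step~1 in the proof of \thmref{thmC4.4}, read with the roles of $A$ and $B$ interchanged and $W=B_\rho$; note that $\Lam$ is automatically compatible with $B_\rho$ in that exchanged sense, precisely because $\zft{B}\cap B_{2\rho}=\emptyset$. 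It therefore remains only to choose $w$ so that $\Gam_w$ is relatively dense, and this is the heart of the matter.

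Relative density of $\Lam$ gives the conclusion for a fattened slab: if $R_0$ is a relative-density radius for $\Lam$, then for each $x\in\R^n$ the ball $B((x,0),R_0)$ meets $\Lam$ in a point whose $\R^m$-coordinate lies in $B_{R_0}$, whence $p_1(\Lam\cap(\R^n\times B_{R_0}))$ is relatively dense; covering $B_{R_0}$ by finitely many translates $B_\rho+w_i$ exhibits $\bigcup_i\Gam_{w_i}$ as a relatively dense finite union of sets each orthogonal for $A$. The main difficulty, which I expect to be the crux, is to pass from here to relative density of a single thin slab $\Gam_w$: a relatively dense finite union need not contain a relatively dense member, so one cannot avoid using that $\Lam$ is a spectrum and not merely relatively dense. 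The feature to exploit is the pointwise tiling identity $(f_A\otimes f_B)+\Lam=1$ furnished by \lemref{lemC1.7}, where $f_A=\sqft{A}$ and $f_B=\sqft{B}$: it couples the vertical fibers of $\Lam$ and, through the saturation principle of \lemref{lemC1.6}, forces the orthogonal slabs to carry their maximal admissible density, so that no thin slab can have arbitrarily large gaps in its $\R^n$-projection. Making this precise—most naturally by a weak-limit argument, using that weak limits of translates of $\Lam$ are again spectra of $\Omega$, in order to reduce to a spectrum possessing a genuine relatively dense horizontal fiber at some height $w$—is where the completeness of the spectrum, rather than its relative density alone, becomes essential.
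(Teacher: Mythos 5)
Your first half coincides with the paper's: the paper's proof also starts from the cut-and-project construction $\Gam:=\{u:\ (u,v)\in\Lam \text{ for some } v\in V\}$, with $V$ an open ball of diameter $\chi(B)$ in place of your $B_\rho+w$, and verifies orthogonality of $E(\Gam)$ in $L^2(A)$ by the same argument (differences of second coordinates are too short to lie in $\zft{B}$, so first-coordinate differences must lie in $\zft{A}$). But the crux you isolate---relative density of a single slab projection---is left as a plan, not a proof, and the plan as stated cannot work. The guiding claim that the tiling identity ``forces the orthogonal slabs to carry their maximal admissible density, so that no thin slab can have arbitrarily large gaps in its $\R^n$-projection'' is false: take $A,B$ spectral (say cubes) and the product spectrum $\Lam=U\times V$ of $\Om$. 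Since $2\rho<\chi(B)\leq\delta(V)$, there are heights $w$ with $(B_\rho+w)\cap V=\emptyset$ (e.g.\ midpoints of nearly minimal gaps of $V$), and for every such $w$ your $\Gam_w$ is \emph{empty}. So no argument, pointwise or density-theoretic, can make a fixed thin slab---or even every slab of some translated spectrum---relatively dense; all one can hope to prove is that \emph{some} relatively dense orthogonal set for $A$ exists, and that existential statement is exactly the remaining difficulty.

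The paper handles it by inverting the quantifier in the contradiction hypothesis: assume that \emph{no} set $\Gam\subset\R^n$ with $E(\Gam)$ orthogonal in $L^2(A)$ is relatively dense. Then \emph{every} slab projection constructed along the way---not just the fibers of the original $\Lam$---fails to be relatively dense, hence misses arbitrarily large cubes $Q_j+t_j$; translating the spectrum by $(t_j,0)$ and passing to a weak limit yields a spectrum of $\Om$ avoiding $\R^n\times V$, while avoidance of previously cleared slabs persists under the weak limit. Iterating over a countable cover of $\R^m$ by balls $V_k$ of diameter $\chi(B)$ produces spectra $\Lam_k$ with $\Lam_k\cap(\R^n\times(V_1\cup\cdots\cup V_k))=\emptyset$, and these converge weakly to the empty set, contradicting the fact that a weak limit of spectra for $\Om$ is again a spectrum. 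Note that this argument never produces a relatively dense fiber of the original $\Lam$, and it uses neither the tiling identity of \lemref{lemC1.7} nor the saturation principle of \lemref{lemC1.6}. Moreover, contrary to your closing assertion, completeness is \emph{not} where the argument bites: Remark~\ref{remC5.3.1} of the paper observes that the same proof runs assuming only a relatively dense $\Lam$ with $E(\Lam)$ orthogonal in $L^2(\Om)$, because orthogonality and relative density (with a uniform radius) persist under weak limits of translates, and the empty set is not relatively dense.
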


One can  view this lemma as establishing  a weak form of \conjref{conjA1.1}.
Indeed, the conjecture asserts that the spectrality of $\Om=A\times B$ implies
the existence of a spectrum $\Gamma$ for $A$. The relative denseness of 
$\Gam$ and the orthogonality of the system $E(\Gamma)$ in $L^2(A)$
are necessary conditions for $\Gam$ to be a spectrum for $A$. However, these conditions are not
sufficient, since they do not guarantee that the system $E(\Gamma)$  is also complete in $L^2(A)$.

\begin{proof}[Proof of \lemref{lemC5.3}]
Assume that the assertion of the lemma is not true, so that 
every set $\Gamma \subset \R^n$ for which the system  $E(\Gamma)$ is orthogonal in $L^2(A)$,
 is not relatively dense. We will show that this leads to a contradiction. 

First, we will establish the following:

\begin{claim*}
Let $G$ be an open set in $\R^m$, and $V$ be an open ball in $\R^m$ of diameter $\chi(B)$
$($where $\chi(B)$ is defined as in \eqref{eqP1.3}$)$.
Suppose that  $\Om$ has a spectrum $\Lam$ satisfying
\begin{equation}
\label{eqC5.3.6}
\Lam\cap(\R^n\times G)=\emptyset.
\end{equation}
Then there exists also a spectrum $\Lam'$ for $\Om$  such that
\begin{equation}
\label{eqC5.3.9.1}
\Lam'\cap(\R^n\times (G\cup V))=\emptyset.
\end{equation}
\end{claim*}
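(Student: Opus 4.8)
The plan is to distill from the given spectrum $\Lam$ a set in $\R^n$ whose exponential system is orthogonal in $L^2(A)$, apply the standing (contradiction) hypothesis of the proof of \lemref{lemC5.3} to locate arbitrarily large balls in $\R^n$ that avoid this set, and then manufacture $\Lam'$ as a weak limit of suitable $\R^n$-translates of $\Lam$.

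First I would set $\Lam_V := \Lam \cap (\R^n \times V)$ and consider its projection $\Gam := p_1(\Lam_V) \subset \R^n$. The decisive use of the hypothesis $\diam V = \chi(B)$ is the following: if $(u,v)$ and $(u',v')$ are two distinct points of $\Lam_V$, then $|v - v'| < \chi(B)$, so $v - v' \notin \zft{B}$ (the least norm of a point of $\zft{B}$ is $\chi(B)$, and $\ft{\1}_B(0) \neq 0$). Since $\Lam$ is a spectrum for $\Om$, the difference $(u-u', v-v')$ lies in $(\zft{A} \times \R^m) \cup (\R^n \times \zft{B})$ by \eqref{eqC4.2.2}; as its second coordinate is not in $\zft{B}$, we must have $u - u' \in \zft{A}$. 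In particular $u \neq u'$, so $p_1$ is injective on $\Lam_V$ and $(\Gam - \Gam) \setminus \{0\} \subset \zft{A}$, which by \eqref{eqP1.2} means precisely that $E(\Gam)$ is orthogonal in $L^2(A)$.

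Next I invoke the standing hypothesis: every such $\Gam$ fails to be relatively dense. Hence there are radii $R_k \to \infty$ and centers $x_k \in \R^n$ with $(x_k + B_{R_k}) \cap \Gam = \emptyset$; equivalently $\Lam \cap ((x_k + B_{R_k}) \times V) = \emptyset$. Translating, set $\Lam_k := \Lam - (x_k, 0)$. Each $\Lam_k$ is again a spectrum for $\Om$, still satisfies $\Lam_k \cap (\R^n \times G) = \emptyset$ because the translation is only in the $\R^n$-direction, and in addition $\Lam_k \cap (B_{R_k} \times V) = \emptyset$.

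Finally, since every $\Lam_k$ has separation constant at least $\chi(\Om) > 0$, I pass to a weakly convergent subsequence $\Lam_{k_j}$ with limit $\Lam'$, which is again a spectrum for $\Om$ by \subsecref{secLimits}. It remains to verify $\Lam' \cap (\R^n \times (G \cup V)) = \emptyset$. A point $(u,v) \in \Lam'$ with $v \in G$ would, by weak convergence, be approximated within any $\eps$ by a point $(u'',v'') \in \Lam_{k_j}$; since $G$ is open, $v'' \in G$ for small $\eps$, contradicting $\Lam_{k_j} \cap (\R^n \times G) = \emptyset$. If instead $v \in V$, the same approximation gives $v'' \in V$ and $|u''| \le |u| + \eps$ bounded; once $R_{k_j}$ exceeds this bound we get $(u'',v'') \in B_{R_{k_j}} \times V$, contradicting $\Lam_{k_j} \cap (B_{R_{k_j}} \times V) = \emptyset$. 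The step demanding the most care is this last one: it is exactly the growth $R_k \to \infty$ that upgrades the ``moving'' exclusion of $V$ over $B_{R_k}$ into the global exclusion of $\R^n \times V$ in the limit, while the conceptual heart of the argument is the extraction of $\Gam$ with $E(\Gam)$ orthogonal in $L^2(A)$.
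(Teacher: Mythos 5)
Your proof is correct and follows essentially the same route as the paper: you form the same cut-and-project set $\Gamma = p_1(\Lam \cap (\R^n \times V))$, use $\diam V = \chi(B)$ together with $\ft{\1}_\Om = \ft{\1}_A \otimes \ft{\1}_B$ to get orthogonality of $E(\Gamma)$ in $L^2(A)$, invoke the standing contradiction hypothesis to find large balls avoiding $\Gamma$, and pass to a weak limit of translates $\Lam - (x_k,0)$. The only differences are cosmetic (balls in place of the paper's cubes $Q_j$, and a more explicit verification of the final exclusion \eqref{eqC5.3.9.1}, which the paper leaves to the reader).
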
	

Indeed, consider
the set\footnote{One may notice that the set $\Gam$ in \eqref{eqC5.3.9} is a cut-and-project set as in Definition \ref{defD4.1}, except that in the present case the roles of $p_1$ and $p_2$ are interchanged.} 
\begin{equation}
\label{eqC5.3.9}
\Gam:=\{ u \in \R^n : \text{there is $v \in V$ such that $(u,v)\in \Lam$}\}.
\end{equation}
Let us show that the system $E(\Gam)$ is  orthogonal in  $L^2(A)$. To see this, let 
$u, u'$ be two distinct elements in $\Gam$. Then by \eqref{eqC5.3.9} there exist $v, v' \in V$ 
such that the (distinct) points $\lambda:=(u,v)$ and $\lambda':=(u',v')$ are in $\Lam$. 
Hence $\lam' - \lam \in \zft{\Om}$. Since
$\ft{\1}_\Om = \ft{\1}_A \otimes \ft{\1}_B$,
we must have $u'-u \in \zft{A}$ or $v'-v \in \zft{B}$. But as $v, v'$ both lie in the open ball $V$ of
diameter $\chi(B)$, the latter possibility cannot occur. It follows that
$u'-u \in \zft{A}$, which means that the exponentials $e_u$ and $e_{u'}$ are 
orthogonal in  $L^2(A)$. 

Once we know that the system $E(\Gam)$ is  orthogonal in  $L^2(A)$, it follows that the set $\Gam$  cannot be relatively dense. Hence
	there is a sequence $t_j$ of vectors in $\R^n$ satisfying
	\begin{equation}
	\label{eqC5.3.4}
	\Gam\cap(Q_j+ t_j)=\emptyset,
	\end{equation}
	where $Q_j := (-j, j)^n$ denotes the open cube in $\R^n$ of side length $2j$ centered at the origin.
	By the definition \eqref{eqC5.3.9} of $\Gam$, the condition \eqref{eqC5.3.4} means that
		\begin{equation}
		\label{eqC5.3.7}
		\Lam\cap((Q_j+t_j)\times V)=\emptyset.
		\end{equation}
	Define
	\[
		\Lam_j:=\Lam-(t_j,0).
	\]
	Then $\Lam_j$ is also a spectrum for $\Omega$. It follows from \eqref{eqC5.3.6} and \eqref{eqC5.3.7} that
	\begin{equation}
	\label{eqC5.3.8}
	\Lam_j\cap(Q_j\times(G\cup V))=\emptyset,
	\end{equation}
	for every $j$. We may extract  from the sequence $\Lam_j$ a weakly convergent subsequence, 
	whose limit $\Lam'$ is also a spectrum for $\Om$. The condition \eqref{eqC5.3.8} guarantees that 
	the new spectrum $\Lam'$ satisfies \eqref{eqC5.3.9.1}. The claim is therefore proved.
	
	Next, we will conclude the proof of \lemref{lemC5.3} based on the above claim. 
	We choose a sequence of open balls $V_k \subset \R^m$ whose union covers the whole $\R^m$,
	and such that each $V_k$ has  diameter $\chi(B)$.
	We construct inductively a sequence $\Lam_k$ of spectra for $\Omega$, such that
	\begin{equation}
	\label{eqC5.3.3}
	\Lam_k \cap(\R^n\times (V_1 \cup \cdots \cup V_k))=\emptyset
	\end{equation}
	for each $k$. The construction is done
	as follows. 
	We start by taking $\Lam_0$ to be any spectrum for $\Omega$. Then, in the $k$'th step
	of the construction, we apply the claim with $\Lam=\Lam_{k-1}$, $G=V_1 \cup \cdots \cup V_{k-1}$ and $V=V_k$.
	The claim yields a spectrum $\Lam_k$ for $\Omega$ that satisfies \eqref{eqC5.3.3}, as required.
	
	Finally  observe that, since the balls  $V_k$ cover the whole $\R^m$, it follows from
	\eqref{eqC5.3.3} that the sequence $\Lam_k$  converges weakly to the empty set.
	This is a contradiction, since a weak limit of spectra for $\Om$ must 
	be a spectrum as well. \lemref{lemC5.3} is thus proved.
\end{proof}

\begin{remark}
\label{remC5.3.1}
The conclusion of \lemref{lemC5.3} remains true if we relax
the assumption that the product set $\Omega=A\times B$ is spectral,
and instead only require the existence of a relatively 
dense set $\Lam \subset \R^n \times \R^m$ such that the system 
$E(\Lam)$ is orthogonal in $L^2(\Omega)$. Indeed, it is not difficult to
check that the proof given above remains valid under this weaker assumption.
\end{remark}

\subsection{}
Now assume that $A$ is a convex polytope in $\R^n$. In order to prove
\thmref{thmC5.4} we will rely on the following two results.

\begin{thm}[Kolountzakis \cite{Kol00a}]
\label{thmC6.1}
Let $A$ be a convex polytope in $\R^n$.
If $A$ is a spectral set, then $A$ is centrally symmetric.\footnote
{Actually it was proved in \cite{Kol00a} that any convex body   (not assumed to be a polytope) in $\R^n$
which is spectral, must be centrally symmetric.}
\end{thm}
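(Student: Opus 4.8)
The plan is to read off central symmetry from the first–order asymptotics of $\ft\1_A$ in the directions normal to the facets of $A$, using the hypothesis of spectrality only through the tiling/packing dictionary of \lemref{lemC1.7}. First I would record what spectrality buys us. If $\Lam$ is a spectrum for $A$, then by \lemref{lemC1.7}\ref{lemC1.7.1} the function $\sqft{A}$ together with $\Lam$ forms a tiling; since $\ft\1_A$ is continuous, the a.e.\ tiling identity upgrades to the pointwise statement $\sum_{\lam\in\Lam}|\ft\1_A(\xi-\lam)|^2=|A|^2$ for every $\xi\in\R^n$. Moreover, by \eqref{eqP1.2}–\eqref{eqP1.3} the set $\Lam$ is uniformly discrete with $\delta(\Lam)\ge\chi(A)>0$, and, being a spectrum, it is relatively dense; hence $\Lam-\Lam$ is relatively dense and, again by \eqref{eqP1.2}, the zero set $\zft{A}$ is relatively dense in $\R^n$.

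The geometric input comes next. For a fixed unit vector $\omega$ I would pass to the one–dimensional marginal $\rho(s):=\mes_{n-1}\{x\in A:\dotprod{x}{\omega}=s\}$, which is continuous, supported on $[b,a]$ with $a=\max_{x\in A}\dotprod{x}{\omega}$ and $b=\min_{x\in A}\dotprod{x}{\omega}$, and which satisfies $\ft\1_A(t\omega)=\hat\rho(t)$. Integration by parts then gives
\[
\hat\rho(t)=\frac{1}{2\pi i t}\bigl(\rho(b)e^{-2\pi i t b}-\rho(a)e^{-2\pi i t a}\bigr)+o(1/t),
\]
where $\rho(a)$ equals the $(n-1)$–measure of the face of $A$ supported in direction $\omega$ (so $\rho(a)>0$ exactly when this face is a facet, i.e.\ $\omega$ is a facet normal), and similarly $\rho(b)$ records the face in direction $-\omega$. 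Thus $\ft\1_A(t\omega)$ decays like $1/t$ along the ray, and the two boundary exponentials can cancel in the leading order only if $\rho(a)=\rho(b)$ — that is, only if the faces of $A$ in the opposite directions $\omega$ and $-\omega$ are both facets of equal $(n-1)$–measure. This is precisely the local manifestation of central symmetry.

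The heart of the argument, and the step I expect to be the main obstacle, is converting this opposite–facet comparison into the global conclusion. Assuming $A$ is \emph{not} centrally symmetric, I would first produce a facet normal $\omega$ for which the opposite face is either lower–dimensional or a facet of strictly smaller measure; then $\rho(a)\neq\rho(b)$, the leading coefficient of $\hat\rho(t)$ is bounded away from $0$, and $\ft\1_A$ has no zeros on the far part $\{t\omega:t\ge T\}$ of that ray. The difficulty is that relative density of $\zft{A}$ only forces zeros in \emph{every} ball, not on a prescribed ray, so orthogonality alone cannot see the asymmetry. To close the gap I would bring in completeness through the full identity $\sum_{\lam}|\ft\1_A(\xi-\lam)|^2=|A|^2$: by carrying out a counting/density estimate inside a bounded tube around the ray $\R_+\omega$ — comparing, as $t\to\infty$, the forced contribution of the non-vanishing $\sim t^{-2}$ term of $|\ft\1_A|^2$ against the uniform density $|A|$ of the spectrum — I expect to obtain a contradiction, which would force $\rho(a)=\rho(b)$ in every facet–normal direction and hence the central symmetry of $A$. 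Making this tube estimate quantitative and uniform is the part that requires the most care.
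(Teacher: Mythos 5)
Your first half is essentially sound, and it runs parallel to the argument behind the stronger \thmref{thmC5.1} (the paper itself only cites \cite{Kol00a} for this statement, and points to \cite{KP02} and \cite[Section 3]{GL17} for the route you are attempting): the marginal computation and the resulting ray asymptotics for $\ft{\1}_A(t\omega)$ are correct, and it is true that non-symmetry yields a facet normal $\omega$ whose opposite face is lower-dimensional or has different $(n-1)$-measure. But note that this last step is not free: it is precisely Minkowski's uniqueness theorem (if the surface area measure of a polytope is even, the polytope is centrally symmetric), a nontrivial Brunn--Minkowski-type input that you should cite explicitly rather than promise to ``produce.''

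The genuine gap is the endgame. You diagnose the obstruction correctly --- relative denseness of $\zft{A}$ forces zeros only in a tube of fixed radius around the ray $\R_+\omega$, while your one-dimensional marginal controls $\ft{\1}_A$ only on the ray itself --- but your proposed repair via completeness is both unlikely to work and unnecessary. The tiling identity $\sum_{\lam}|\ft{\1}_A(\xi-\lam)|^2=|A|^2$ is sustained near any far point $t\omega$ by the terms with $\lam$ at bounded distance from $t\omega$, and is insensitive to the $t^{-2}$ size of $|\ft{\1}_A|^2$ along a single ray; there is no density-versus-decay tension to exploit, so no contradiction is forced by counting in a tube. The correct fix is different: upgrade the ray asymptotics to \emph{uniform tube asymptotics}, namely $2\pi i t\, \ft{\1}_A(t\omega+\eta)= e^{-2\pi i t a}\,\ft{\1}_{F_a}(\eta)-e^{-2\pi i t b}\,\ft{\1}_{F_b}(\eta)+o(1)$ uniformly for $\eta$ in a bounded set transversal to $\omega$, where $F_a, F_b$ are the opposite faces; your $\rho(a),\rho(b)$ are merely the values of these $(n-1)$-dimensional Fourier transforms at $\eta=0$ (this is the analogue of \eqref{eqD1.5.1}, see \cite[Lemma 6.1]{GL17}). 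Then, instead of a counting estimate, use the weak-limit device that this paper itself employs in the proof of \lemref{lemD1.5}: translate $\Gam$ by $-(k,0,\dots,0)$, pass to a weakly convergent subsequence with limit $\Gam'$, and approximate a \emph{single} point of $\Gam'$ by two translate sequences $k'_j,k''_j$ with $k''_j-k'_j\to\infty$. This manufactures distinct pairs of points of the original $\Gam$ whose differences have $\omega$-component tending to infinity and transversal component tending to $0$; orthogonality places these differences in $\zft{A}$, and the tube asymptotics then yield $\big||F_a|-|F_b|\big|=0$, contradicting the choice of $\omega$. Observe that this uses only orthogonality and relative denseness of $\Gam$ --- so your claim that ``orthogonality alone cannot see the asymmetry'' is mistaken, as the paper records in \thmref{thmC5.1}. (A minor further flaw: your pointwise upgrade of the a.e.\ tiling identity is unjustified as stated, since $\sum_{\lam}|\ft{\1}_A(\xi-\lam)|^2$ need not be continuous; this becomes moot once the endgame is repaired.)
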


\begin{thm}[\cite{GL17}]
\label{thmC6.2}
Let $A$ be a convex, centrally symmetric polytope in $\R^n$.
If $A$ is spectral, then all the facets of  $A$ are also  centrally symmetric.
\end{thm}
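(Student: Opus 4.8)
The plan is to isolate a single facet of $A$ inside the Fourier transform $\ft\1_A$, to use the spectrum of $A$ to extract from it a lower-dimensional rigidity statement about that facet, and finally to reduce the central symmetry of the facet to a lower-dimensional instance of the same symmetry phenomenon. By \thmref{thmC6.1} and a translation we may assume $A=-A$. Fix a facet $F$ of $A$; by the affine invariance of spectrality we may take its outer unit normal to be $e_n$, so that $F$ lies in a hyperplane $\{x_n=t\}$ with $t>0$, and then $A=-A$ forces the opposite facet to be $-F\subset\{x_n=-t\}$. The goal is that $F$ be centrally symmetric, and it is worth recording at the outset that this is equivalent to the phase $\theta(\eta):=\arg\ft\1_F(\eta)$ of the $(n-1)$-dimensional Fourier transform of the facet coinciding, modulo $\pi$, with a linear function of $\eta$.

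First I would set up the analytic engine, the divergence-theorem representation
\[
\ft\1_A(\xi)=\frac{-1}{2\pi i\,|\xi|^2}\sum_{G}\dotprod{\xi}{\nu_G}\,\widehat{\sigma_G}(\xi),
\qquad
\widehat{\sigma_G}(\xi)=\int_{G}e^{-2\pi i\dotprod{\xi}{x}}\,d\sigma_G(x),
\]
the sum running over the facets $G$ of $A$ with outer normals $\nu_G$, and then examine it in the near-normal region $\xi=(\eta,r)$ with $\eta$ bounded and $r\to\infty$. For the two facets $F,-F$ the pairing $\dotprod{\xi}{\nu_G}=\pm r$ has size $r$, while the phase is constant in the $x_n$-direction on each of them; for every other facet $G$ the normal $\nu_G$ is transverse to $e_n$, and a further use of the divergence theorem on $G$ shows that its contribution is $o(1/r)$. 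This produces the key asymptotic
\[
\ft\1_A(\eta,r)=\frac{-1}{2\pi i\,r}\Bigl(e^{-2\pi i r t}\,\ft\1_F(\eta)-e^{2\pi i r t}\,\overline{\ft\1_F(\eta)}\Bigr)+o(1/r),
\]
uniformly for $\eta$ in compact sets, where I have used $(-F)'=-F'$ to write the contribution of the opposite facet as the complex conjugate of that of $F$. Equivalently the leading term equals $-\frac1{\pi r}\,|\ft\1_F(\eta)|\,\sin\!\bigl(\theta(\eta)-2\pi r t\bigr)$, so that, to leading order, the zero set $\zft{A}$ in this region consists of the points with $\ft\1_F(\eta)=0$ together with the surfaces $\theta(\eta)\equiv 2\pi r t\ (\mathrm{mod}\ \pi)$.

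Now I would bring in the spectrum $\Lam$ of $A$, which by \secref{secC1} is relatively dense and uniformly discrete with $(\Lam-\Lam)\setminus\{0\}\subset\zft{A}$, together with the tiling identity $\sum_{\lam\in\Lam}|\ft\1_A(\cdot-\lam)|^2=|A|^2$ coming from part \ref{lemC1.7.1} of \lemref{lemC1.7}. Because the decay along the normal is only of order $1/r$, relative density guarantees, at every large height, pairs of spectrum points whose differences fall into the near-normal cone; feeding these into the asymptotic above and following the scheme used for \lemref{lemC5.3} and the weak-limit argument of \subsecref{secLimits}, I would assemble in $\R^{n-1}$ a relatively dense set $\Gamma$ whose exponentials $E(\Gamma)$ are orthogonal in $L^2(F)$. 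As $F$ is a convex body in $\R^{n-1}$ carrying a relatively dense orthogonal exponential system, the relatively-dense form of Kolountzakis' theorem \thmref{thmC6.1} (whose argument uses relative density rather than completeness, in the spirit of \remref{remC5.3.1}) would force $F$ to be centrally symmetric; the induction terminates at once in low dimension, since a one-dimensional facet is a segment and a two-dimensional facet is a convex polygon covered by \cite{IKT03}.

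The heart of the matter, and what I expect to be the main obstacle, is the extraction step just described: one must control the $o(1/r)$ error uniformly as $r\to\infty$ and verify that both the orthogonality and, above all, the relative density of the limiting frequency set on $F$ are preserved. Concretely, this comes down to showing that the curved surfaces $\theta(\eta)\equiv 2\pi r t\ (\mathrm{mod}\ \pi)$ cannot contain a relatively dense, uniformly separated difference set across all large heights $r$ while the exact tiling identity persists, unless $\theta(\eta)$ agrees modulo $\pi$ with a linear function of $\eta$ --- which is exactly the central symmetry of $F$ that we seek.
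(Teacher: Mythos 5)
Your analytic setup is sound and is in fact the same engine that underlies the cited proof: the divergence-theorem expansion of $\ft{\1}_A$ near the facet normal is the higher-dimensional analogue of the asymptotic \eqref{eqD1.5.1} quoted from \cite[Lemma 6.1]{GL17}, and your reformulation of central symmetry of $F$ as linearity mod $\pi$ of the phase $\theta(\eta)$ of $\ft{\1}_F$ is correct. The genuine gap is the extraction step in your middle paragraph. The zeros of $\ft{\1}_A$ in the near-normal cone are produced, to leading order, by the phase alignment $\sin(\theta(\eta)-2\pi rt)\approx 0$, \emph{not} by $\ft{\1}_F(\eta)=0$; so near-normal differences of spectrum points deliver frequencies $\eta$ lying on the phase surfaces, with no reason whatsoever to lie in $\zft{F}$. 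The cube already shows this: for $A=[-\frac12,\frac12]^n$ with spectrum $\Lam=\Z^n$, the near-normal differences $(\eta,r)$, $r\in\Z\setminus\{0\}$, realize \emph{every} $\eta\in\Z^{n-1}$, including all points where $\ft{\1}_F(\eta)\neq 0$, the vanishing of $\ft{\1}_A$ coming entirely from $\sin(2\pi r t)=0$ with $t=\frac12$. Hence no relatively dense $\Gam\subset\R^{n-1}$ with $E(\Gam)$ orthogonal in $L^2(F)$ comes out of this argument.

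Worse, the intermediate statement you aim for is \emph{false}, so no repair can route through it. If every facet of a spectral centrally symmetric polytope carried a relatively dense orthogonal exponential system, then \thmref{thmC5.1} applied to $F$ would force all facets of $F$ (the $(n-2)$-faces of $A$) to be centrally symmetric, and your induction would propagate this to all lower faces --- contradicting the paper's remark following \thmref{thmC5.1} (citing \cite[Section~4A]{GL17}) that no such improvement holds for $2\le k\le n-2$. Concretely, the $24$-cell tiles $\R^4$ by lattice translations, hence is spectral; it is centrally symmetric with centrally symmetric octahedral facets, but the octahedron has triangular $2$-faces, so by \thmref{thmC5.1} it admits no relatively dense orthogonal system at all. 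Your closing paragraph does name the correct crux --- that a relatively dense difference structure on the surfaces $\theta(\eta)\equiv 2\pi rt \pmod{\pi}$ forces $\theta$ to be linear mod $\pi$ --- but leaves it entirely unproved, and indeed flags it as the main obstacle. That phase-rigidity argument is precisely the content of the proof in \cite[Section~4]{GL17}, which the present paper does not reproduce but invokes, observing (as you correctly anticipated) that it uses only the orthogonality of $E(\Gam)$ and the relative density of $\Gam$, never completeness. As it stands, the proposal is a program with a false middle step and an open core, not a proof.
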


Suppose now that the product set
$\Omega=A\times B$ is spectral. If we knew that the spectrality
of $\Omega$ implies that $A$ must also be spectral, then we 
could deduce from Theorems
\ref{thmC6.1} and \ref{thmC6.2}  that $A$ is centrally symmetric
and has centrally symmetric facets. Recall that
by \lemref{lemC5.3}, there is a relatively dense set
$\Gamma \subset \R^n$ such that the system of exponentials $E(\Gamma)$ 
is orthogonal in $L^2(A)$. Nevertheless, this does not mean that $A$ is spectral,
since we do not know that the system $E(\Gamma)$  is
also complete in $L^2(A)$.
We therefore cannot formally deduce \thmref{thmC5.4} based on the statements
of \lemref{lemC5.3} and Theorems \ref{thmC6.1} and  \ref{thmC6.2}.

However there is a proof of \thmref{thmC6.1} 
which in fact does not require the
completeness of the system $E(\Gamma)$  in $L^2(A)$,  only
its orthogonality and the relative denseness of $\Gam$
\cite{KP02} (see also \cite[Section 3]{GL17}).
The same is true also for the proof of \thmref{thmC6.2}, see \cite[Section 4]{GL17}. 
Hence a consequence of these proofs is that the following more general version of the results is actually true:

\begin{thm}
\label{thmC5.1}
Let $A$ be a convex polytope in $\R^n$. Assume that there exists a relatively dense set $\Gamma \subset \R^n$ such that the system of exponentials $E(\Gamma)$ is orthogonal in $L^2(A)$. Then $A$ must be centrally symmetric and have centrally symmetric facets.
\end{thm}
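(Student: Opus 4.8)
The plan is to revisit the \emph{proofs} of Theorems~\ref{thmC6.1} and~\ref{thmC6.2} rather than to apply their statements. As stated, both theorems assume that $A$ is spectral, which includes completeness of the exponential system, so they cannot be invoked directly under the present weaker hypothesis. However, as already indicated before the statement, the arguments in \cite{KP02} and in \cite[Sections 3--4]{GL17} never use completeness: they use only the orthogonality of the exponential system and the relative denseness of its frequency set. Since these are exactly the two properties we are given, the strategy is to audit those proofs step by step and confirm that each deduction survives the replacement of ``spectrum'' by ``relatively dense orthogonal set.''

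First I would record the admissible consequences of the hypothesis. By \eqref{eqP1.2}, the orthogonality of $E(\Gamma)$ in $L^2(A)$ is equivalent to $(\Gamma-\Gamma)\setminus\{0\}\subset\zft{A}$; in particular $\Gamma$ is uniformly discrete with $\delta(\Gamma)\ge\chi(A)>0$, and by part~\ref{lemC1.7.2} of \lemref{lemC1.7} the function $f_A:=\sqft{A}$ satisfies that $f_A+\Gamma$ is a packing, whence the upper uniform density of $\Gamma$ is at most $|A|$. The relative denseness of $\Gamma$ supplies the matching lower bound: a relatively dense, uniformly discrete set has positive lower uniform density, and its difference set meets every sufficiently large ball. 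The crucial observation is that in the spectral case the completeness of $E(\Gamma)$ enters only indirectly, to guarantee this relative denseness (equivalently, a lower density bound via Landau's theorem); here we simply assume it outright.

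With these inputs, the two geometric mechanisms proceed as in the literature. For central symmetry (Theorem~\ref{thmC6.1}, following \cite{Kol00a, KP02}), one analyses the asymptotic behaviour of $\ft{\1}_A(\xi)$ as $|\xi|\to\infty$ along rays, which for a polytope is controlled by the facets whose outer normal is parallel to the ray; the fact that $\zft{A}\supset(\Gamma-\Gamma)\setminus\{0\}$ is forced to be sufficiently rich then rules out any directional asymmetry, giving central symmetry (equivalently, that the support function satisfies $h_A(\theta)-h_A(-\theta)=2\langle c,\theta\rangle$ for a fixed $c$). For the facet symmetry (Theorem~\ref{thmC6.2}, following \cite[Section 4]{GL17}) one may assume $A=-A$ and examine, along a facet normal $\nu$, the leading term of $\ft{\1}_A(t\nu+\eta)$, which is a combination of the $(n-1)$-dimensional Fourier transforms of the two parallel facets; one shows that if such a facet failed to be centrally symmetric, the resulting pattern of zeros of $\ft{\1}_A$ would be too sparse to be compatible with a relatively dense, uniformly discrete orthogonal set, a contradiction.

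The main obstacle is therefore bookkeeping rather than a new idea: one must traverse each step of \cite{KP02} and \cite[Sections 3--4]{GL17} and verify that wherever ``$\Gamma$ is a spectrum for $A$'' is used, the conclusion actually rests only on $(\Gamma-\Gamma)\setminus\{0\}\subset\zft{A}$ and on the relative denseness of $\Gamma$. The two delicate points to check are (i) any place invoking the \emph{exact} density $|A|$ of a spectrum, which must be shown to go through with only the two-sided density bounds established above, and (ii) any compactness or weak-limit step, which must be checked to preserve orthogonality and relative denseness (using \subsecref{secLimits}) rather than full spectrality. Once this verification is complete, the arguments apply verbatim and yield that $A$ is centrally symmetric and has centrally symmetric facets.
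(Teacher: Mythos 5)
Your proposal follows exactly the paper's own route: the paper likewise proves \thmref{thmC5.1} by observing that the known proofs of Theorems~\ref{thmC6.1} and~\ref{thmC6.2} in \cite{KP02} and \cite[Sections 3--4]{GL17} use only the orthogonality of $E(\Gamma)$ and the relative denseness of $\Gamma$, never the completeness of the system. Your additional remarks (the packing/density bookkeeping and the audit of weak-limit steps) are consistent with those cited arguments, so the proposal is correct and essentially identical in approach.
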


This more general version is suitable for combining with \lemref{lemC5.3}, and this yields
\thmref{thmC5.4} as an immediate corollary.

\begin{remark}
The conclusion of Theorems \ref{thmC6.1} and \ref{thmC6.2} (or \thmref{thmC5.1})
cannot be further improved by showing
that also all the $k$-dimensional faces of  $A$, for some $2 \le k \le n-2$, 
must be  centrally symmetric (see \cite[Section~4A]{GL17}).
\end{remark}

\begin{remark}
In the special case when the set $B$ is also a convex polytope, we can derive \thmref{thmC5.4} directly from
Theorems \ref{thmC6.1} and  \ref{thmC6.2}, without the use of \lemref{lemC5.3}.
Indeed, in this case the product $\Omega = A \times B$ is a convex polytope in $\R^n\times \R^m$.
Hence $\Omega$ must be centrally symmetric (\thmref{thmC6.1}), which means that
the set $-\Omega=(-A)\times (-B)$ is a translate of $\Omega$. It follows that $-A$ is a translate of $A$,
thus $A$ is  centrally symmetric. 
Next, suppose that  $F$ is a facet of $A$. Then $F\times B$ is a facet of $\Omega$, hence it
is also centrally symmetric (\thmref{thmC6.2}). Thus $(-F)\times (-B)$ is a
translate of $F\times B$, and as before we can deduce that $F$ is centrally
symmetric. 
\end{remark}

\begin{remark}
For $n \geq 2$, the convex polytopes $A \subset \R^n$ which satisfy
the assumptions in \thmref{thmC5.1} form  a strictly larger class  than the spectral convex polytopes. 
As an example, let $P$ be any convex, 
centrally symmetric polygon in $\R^2$, which is neither a parallelogram nor a  hexagon,
and whose vertices lie in $\Z^2$. Then the zero set $\zft{P}$ contains
$\Z^2 \setminus \{0\}$ (this follows, for instance, from the results in \cite{Kol00b}). Hence if we
define $A := P \times [0,1]^{n-2}$, then $A$ is a convex polytope in $\R^n$
such that the system $E(\Z^n)$ is orthogonal in $L^2(A)$.
However $A$ is not spectral, by \thmref{thmA1.0} and the fact
 (due to \cite{IKT03}) that $P$ is not spectral.
\end{remark}

\subsection{}
\label{secC5.2}
There is another result about spectral sets, whose proof in fact  requires only the
existence of an orthogonal (but not necessarily complete) system 
of exponentials $E(\Gamma)$ with a relatively dense
set of frequencies $\Gam$. The result states that a ball
\cite{IKP99}, or more generally, a centrally symmetric convex body with a smooth boundary
\cite{IKT01} in $\R^n$ $(n \geq 2)$ cannot be spectral. The proof of this result shows that 
the following more general version is true:

\begin{thm}
\label{thmC5.2}
Let $A$ be a centrally symmetric convex body in $\R^n$ $(n \geq 2)$ with a smooth boundary. 
Then there cannot exist a relatively dense set $\Gamma \subset \R^n$ 
such that the system of exponentials $E(\Gamma)$ is orthogonal in $L^2(A)$. 
\end{thm}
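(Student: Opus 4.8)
The plan is to follow the strategy of Iosevich--Katz--Pedersen \cite{IKP99} (for the ball) and Iosevich--Katz--Tao \cite{IKT01} (for the general case), while checking that at no stage does one use the \emph{completeness} of the exponential system --- only its orthogonality, together with the relative denseness of $\Gam$. First I would reduce to a convenient normalization. Since both the hypothesis and the conclusion are invariant under translating $A$, I may assume $A=-A$, so that $\ft{\1}_A$ is real-valued. Orthogonality of $E(\Gam)$ means $(\Gam-\Gam)\setminus\{0\}\subset\zft{A}$, and in particular, by \eqref{eqP1.3}, $\Gam$ is uniformly discrete with $\delta(\Gam)\ge\chi(A)>0$. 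Translating $\Gam$ as well (which preserves both orthogonality and relative denseness), I may also arrange that $0\in\Gam$; then every nonzero point of $\Gam$ lies in $\zft{A}$, and, more strongly, \emph{all} pairwise differences of points of $\Gam$ lie in $\zft{A}\cup\{0\}$.

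The analytic input is the asymptotic behaviour of $\ft{\1}_A$. Because $A$ is convex with smooth boundary, its boundary carries at least one point of nonvanishing Gaussian curvature (the total curvature is positive), so the method of stationary phase yields the decay $|\ft{\1}_A(\xi)|\le C|\xi|^{-(n+1)/2}$ together with a precise oscillatory main term. For a centrally symmetric $A$ the two antipodal stationary points combine into a cosine whose phase is governed by the support function $h_A$. Consequently, for large $|\xi|$ the zero set $\zft{A}$ is organized into thin shells clustering near the dilated surfaces $c_k\,\partial A^{\circ}$ of the polar body $A^{\circ}$, with consecutive levels $c_k$ spaced by $\tfrac12$ in the $h_A$-gauge, and with $\ft{\1}_A$ bounded below by a fixed multiple of its envelope $|\xi|^{-(n+1)/2}$ on the bulk region lying strictly between two consecutive shells.

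The heart of the matter is to show that a relatively dense set cannot have all of its pairwise differences confined to these thin shells, and I expect this to be the main obstacle. A naive comparison of densities is inconclusive: the zero set is so large that, as far as mere counting from a single base point goes, its ``surface'' could host a set of the full lower density that relative denseness forces, namely $\gtrsim(T/R)^n$ points in $B(0,T)$. What must be exploited instead is the rigidity coming from the fact that \emph{every} difference $\gamma-\gamma'$ --- not only those measured from one fixed point --- is pinned to the shells and avoids the bulk. Following \cite{IKP99,IKT01}, I would use the asymptotics above to quantify this incompatibility as $T\to\infty$; this is exactly the step where the positive curvature of $\partial A$ enters decisively. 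The essential observation for the present purpose is that this whole argument relies only on the packing property supplied by orthogonality and on the relative denseness of $\Gam$, and never on completeness of $E(\Gam)$ in $L^2(A)$ --- which is precisely what yields the strengthened conclusion of \thmref{thmC5.2}.
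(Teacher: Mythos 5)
Your proposal takes essentially the same route as the paper: the paper's entire proof of \thmref{thmC5.2} consists of the observation that the arguments of \cite{IKP99} and \cite{IKT01} use only the orthogonality of $E(\Gamma)$ (through the difference-set condition $(\Gamma-\Gamma)\setminus\{0\}\subset\zft{A}$) and the relative denseness of $\Gamma$, never the completeness of the system, which is precisely the check you carry out. One minor caution: a single point of nonvanishing Gaussian curvature does not yield the global bound $|\ft{\1}_A(\xi)|\le C|\xi|^{-(n+1)/2}$, only the stationary-phase asymptotics in a cone of directions near the normal at that point --- but since the \cite{IKT01} argument localizes to such a cone anyway, this does not affect the approach.
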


Combining \lemref{lemC5.3} and  \thmref{thmC5.2} we conclude that
the product $\Omega=A\times B$ of  a centrally symmetric
convex body $A \subset \R^n$ $(n \geq 2)$ with a smooth boundary,
and a bounded, measurable set $B \subset \R^m$, can never be  spectral.
This yields \thmref{thmC5.5}.


\section{Convex polygons: Effective constraints for spectra} 
\label{secD1}

One of the difficulties in the spectral set problem for convex polytopes $\Omega$
involves the structure of the zero set  $\zft{\Omega}$.
If $\Lambda$ is a spectrum for $\Omega$, then this zero set 
appears in  condition \eqref{eqP1.2}
which is equivalent to the orthogonality of the exponential system $E(\Lam)$ in 
the space $L^2(\Omega)$. However, unless $\Omega$ is a parallelepiped, 
there is no explicit description of the zero set  $\zft{\Omega}$, a fact which 
presents a certain  difficultly in using the condition  \eqref{eqP1.2}  effectively. 

Our purpose in this section is to circumvent this difficulty  in the case when
$\Omega=A\times B$ is the product of a
convex polygon $A \subset \R^2$, and a bounded, measurable set  $B \subset \R^m$. 
We do this by introducing a new set that we denote by  $H(A)$, and which can be used 
in some sense as a substitute for the zero set  $\zft{A}$.

Using the asymptotics of the Fourier transform
$\ft{\1}_A$ we will prove that if
$\Omega = A \times B$ is a spectral set, then it has a spectrum $\Lam$ 
satisfying a version of  condition \eqref{eqP1.2}, obtained by replacing 
the  zero set  $\zft{A}$ with  the new set $H(A)$.
The advantage of the set  $H(A)$ is that it can be explicitly calculated,
which provides us with more effective information on the structure
of the spectrum $\Lam$. This additional information will be used in 
\secref{secD2} to prove  the main result of the paper, \thmref{thmA1.1}.

\subsection{}
	Let $A \subset \R^2$ be a convex, centrally symmetric polygon.
If $e$ is any edge of  $A$, then by the central symmetry there is another edge $e'$ of  $A$ which is parallel to $e$ and has the same length. Hence $e$ is a translate of $e'$, so there is a translation vector $\tau_e$ in $\R^2$ which carries $e'$ onto $e$
(see Figure \ref{fig:taue}). Define
\begin{equation}
	\label{defD1.3}
	H(A, e) := \{t \in \R^2 : \dotprod{t}{\tau_e} \in \Z
	\; \; \text{or} \; \;  \dotprod{t}{e} \in \Z \setminus \{0\} \},
\end{equation}
where we use $e$ also to denote a vector in $\R^2$
which has the same direction and length as the edge $e$
(such a vector is unique up to a sign).
The set $H(A, e)$ consists of an infinite
system of straight lines with directions perpendicular either to $\tau_e$ or to $e$.


\begin{figure}[htb]
\centering
\begin{tikzpicture}[scale=0.8, p2/.style={line width=0.275mm, black}, p3/.style={line width=0.15mm, black!50!white}]

\draw[p2] (-1, -2) -- (2, -2);
\draw[p3] (2,-2) -- (3.5,-1) -- (3.5,0) -- (2.8, 1) -- (1, 2);
\draw[p2] (1, 2) -- (-2,2);
\draw[p3] (-2,2) -- (-3.5, 1) -- (-3.5, 0) -- (-2.8, -1) -- (-1,-2);

\draw[-latex, p2] (-1,-2) -- (-2,2);

\draw (-0.5, 2) node[anchor=south]{$e$};
\draw (0.5, -2) node[anchor=north]{$e'$};
\draw (-1.5, 0) node[anchor=west]{$\tau_e$};
\end{tikzpicture}
\caption{Two parallel edges $e$ and $e'$ of $A$, and the vector $\tau_e$.}
\label{fig:taue}
\end{figure}


The sets $H(A, e)$ are used to construct another set $H(A)$, defined as follows:

\begin{definition}
\label{defD1.4}
We denote
\begin{equation}
	\label{eqdefD1.4}
	H(A) := \bigcap_{e}  H(A, e) \setminus \{0\},
\end{equation}
where the intersection is taken over all the edges $e$ of $A$.
\end{definition}

For example, suppose that $A$ is a parallelogram
spanned by two linearly independent vectors $a$, $b$ in $\R^2$.
Then it is not difficult to check that 
\begin{equation}
\label{eqD1.3.1}
H(A) = \{t \in \R^2 : \dotprod{t}{a} \in \Z \setminus \{0\}
\; \; \text{or} \; \; \dotprod{t}{b} \in \Z \setminus \{0\} \}.
\end{equation}
The set $H(A)$ thus consists of infinitely many
straight lines with directions  perpendicular to one of the edges of $A$.
Notice  that in this case we have $H(A) = \zft{A}$.

On the other hand, if $A$ is not a parallelogram, then one can verify that
$H(A)$ is a discrete closed set,
contained in the union of a finite number of lattices. This observation can essentially 
be found in \cite{Kol00b}. (The paper \cite{Kol00b} was concerned with a different 
subject -- the  structure of multi-tilings of $\R^2$ by translates of polygonal regions,
but interestingly the set which we denote by $H(A)$ was used in that paper as well.)

It is obvious that the set $H(A)$ remains invariant under translations of $A$.
It is also easy to check that if $M$ is a $2 \times 2$ invertible matrix
 then $H(M(A)) = (M^{-1})^\top (H(A))$.

\subsection{}
Before we move on to study the spectrality of the product set $\Omega=A\times B$,
 we will first illustrate the idea of how to use the set $H(A)$  in connection with a simpler 
problem, namely, the spectrality of the convex polygon $A$ itself.

It was proved in \cite{IKT03} that the spectral convex polygons in $\R^2$
are precisely the parallelograms and the centrally symmetric hexagons. 
Another proof of this fact was given in \cite[Section~8]{GL17}. In this paper
 we will obtain a third proof of this result. The proof relies on the following lemma:

\begin{lem}
	\label{lemD1.5}
	Let $A$ be a convex, centrally symmetric polygon in $\R^2$. If
	$A$ is a spectral set, then it admits a spectrum $\Gamma$ satisfying
\begin{equation}
	\label{eqD1.5}
	(\Gamma - \Gamma) \setminus \{0\} \subset H(A).
\end{equation}
\end{lem}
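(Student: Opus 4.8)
The plan is to exploit the explicit asymptotics of $\ft{\1}_A$ and to recognize $H(A)$ as the common zero set of the leading-order contributions of the edges of $A$. Applying the divergence theorem to $\1_A$ and integrating along each edge, one obtains the exact expansion
\[
\ft{\1}_A(\xi) = \sum_{e} T_e(\xi), \qquad
T_e(\xi) = \frac{i\,\langle\xi,n_e\rangle}{2\pi|\xi|^2}\,\frac{\sin(\pi\langle\xi,e\rangle)}{\pi\langle\xi,\hat e\rangle}\, e^{-2\pi i\langle\xi,m_e\rangle}\bigl(1-e^{2\pi i\langle\xi,\tau_e\rangle}\bigr),
\]
where the sum runs over the pairs of opposite edges, $n_e$ is the outer normal, $m_e$ the midpoint of $e$, and $\hat e = e/|e|$. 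The first thing I would record is that the zero set of a single term $T_e$ is precisely $H(A,e)$ (up to the exceptional line $\langle\xi,n_e\rangle=0$): the factor $1-e^{2\pi i\langle\xi,\tau_e\rangle}$ vanishes exactly when $\langle\xi,\tau_e\rangle\in\Z$, while $\sin(\pi\langle\xi,e\rangle)$ vanishes when $\langle\xi,e\rangle\in\Z$, the value $0$ being removed by the sinc denominator. Thus the desired conclusion $(\Gamma-\Gamma)\setminus\{0\}\subset H(A)$ is exactly the assertion that every nonzero difference of the spectrum is a \emph{simultaneous} zero of all the $T_e$ — a genuine strengthening of the orthogonality relation $(\Gamma-\Gamma)\setminus\{0\}\subset\zft{A}$, which merely says that $\sum_e T_e$ vanishes.

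The analytic heart of the argument is the comparison of magnitudes. For $\xi$ large and perpendicular to $e$ one has $\langle\xi,\hat e\rangle=0$, the sinc factor equals $|e|$, and $|T_e(\xi)|$ is of order $|\xi|^{-1}$; whereas in a direction bounded away from the normals of all edges every term is of order $|\xi|^{-2}$. Consequently, for a zero $\xi$ of $\ft{\1}_A$ lying far out and near the direction $n_e$, the dominant term $T_e(\xi)$ cannot be cancelled by the remaining $O(|\xi|^{-2})$ terms unless it is itself small, which forces $|1-e^{2\pi i\langle\xi,\tau_e\rangle}|=O(|\xi|^{-1})$, i.e.
\[
\dist\bigl(\langle\xi,\tau_e\rangle,\Z\bigr)=O(|\xi|^{-1}).
\]
I would phrase this as an \emph{asymptotic trapping} statement: as $|\xi|\to\infty$, the zero set $\zft{A}$ is squeezed towards the line family of $H(A)$ attached to each edge.

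To convert this approximate, large-scale information into the exact inclusion $(\Gamma-\Gamma)\setminus\{0\}\subset H(A)$ for an actual spectrum, I would run a weak-limit/compactness argument in the spirit of the proof of \lemref{lemC5.3}. Starting from a given spectrum $\Gamma_0$, one produces a sequence of spectra (all uniformly discrete with the same separation bound $\chi(A)$) in which the edgewise constraints are enforced at ever larger scales, and extracts a weak limit $\Gamma$, which is again a spectrum by the weak-limit principle of \secref{secLimits}. The translations must be arranged so that the $O(|\xi|^{-1})$ errors in the trapping are driven to zero in the limit, while the closedness of each line family $\{\langle t,\tau_e\rangle\in\Z\}$ and $\{\langle t,e\rangle\in\Z\}$ guarantees that the limiting differences land in $H(A)$ exactly.

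I expect this last step to be the main obstacle. The trapping estimate controls only those differences that are simultaneously large and nearly perpendicular to a given edge, whereas the conclusion must hold for every difference and every edge at once; in particular the second family $\langle t,e\rangle\in\Z\setminus\{0\}$, governing directions transverse to $n_e$, together with the finitely many short differences, is not reached directly by the magnitude comparison and must be captured through the limiting procedure itself. Organizing the sequence of translations and weak limits so that all edges and both line families are enforced simultaneously and exactly, without destroying the spectrum property, is the crux of the proof.
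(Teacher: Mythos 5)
Your skeleton is genuinely the same as the paper's: edge-localized asymptotics of $\ft{\1}_A$ plus weak limits of translated spectra, with your ``asymptotic trapping'' playing the role of the formula $\pi u\, \ft{\1}_A(u,v)=\sin \pi u\cdot\ft{\1}_I(v)+O(|u|^{-1})$ (valid for $|u|\to\infty$, $|v|\le C$) that the paper imports from \cite{GL17} after an affine normalization placing $e=\{\frac12\}\times[-\frac12,\frac12]$. But the step you explicitly defer --- ``organizing the sequence of translations and weak limits \dots is the crux'' --- is a genuine gap, and the paper's devices for closing it are absent from your plan. Concretely: (i) after normalization $\tau_e=(1,0)$, and one translates the spectrum by the \emph{integer} vectors $(k,0)$, extracting a weak limit $\Gam'$ along an infinite set $S$ of positive integers; (ii) given any two points $(u',v')$, $(u'',v'')$ of $\Gam'$, one lifts them back to $\Gam$ at two widely separated stages $k'_j,k''_j\in S$ with $k''_j-k'_j\to\infty$, producing pairs in $\Gam$ whose first-coordinate gap tends to infinity. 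This is precisely what reaches the ``finitely many short differences'' that worry you: every difference of the limit spectrum, short or long and in any direction, is realized as a limit of differences of $\Gam$ that are large in the $\tau_e$-direction, so the trapping estimate applies to all of them; (iii) because the shifts are integers, $\dist(u''-u',\Z)\le \dist(u''_j-u'_j,\Z)+|u'_j-k'_j-u'|+|u''_j-k''_j-u''|$, so the $O(|u|^{-1})$ error is killed \emph{exactly} in the limit, not merely driven small.

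Two further points where your plan misreads the structure. First, your trapping statement is incomplete as written: at a distant zero of $\ft{\1}_A$ near the normal of $e$, smallness of the dominant term forces only the \emph{product} of the sinc-type factor and $1-e^{2\pi i\dotprod{\xi}{\tau_e}}$ to be small, so either factor may be the small one; this is exactly why $H(A,e)$ is a union of two line families, and why the paper argues by dichotomy rather than enforcement. One \emph{assumes} $v''-v'\notin\Z\setminus\{0\}$, which by continuity keeps $|\ft{\1}_I(v''_j-v'_j)|$ bounded away from zero and forces $\sin\pi(u''_j-u'_j)\to 0$, hence $u''-u'\in\Z$; the second family $\dotprod{t}{e}\in\Z\setminus\{0\}$ is a case hypothesis to negate, never a conclusion to be ``captured through the limiting procedure.'' Second, ``all edges at once'' is handled sequentially, not simultaneously: one enforces one edge per weak limit, and since each new spectrum $\Gam_{k}$ is a weak limit of translates of $\Gam_{k-1}$, the set $\Gam_k-\Gam_k$ lies in the closure of $\Gam_{k-1}-\Gam_{k-1}$; each $H(A,e_j)$ is closed, so constraints already gained persist through all later limits, and after $N$ steps the differences lie in $\bigcap_j H(A,e_j)$, hence in $H(A)$. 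With these three devices (integer shifts along $\tau_e$ with divergent stage gaps, the case dichotomy built into the union structure of $H(A,e)$, and the closedness-plus-iteration argument) your outline becomes the paper's proof; without them it does not close.
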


In order to understand the point of this lemma, 
recall that every spectrum $\Gamma$ of $A$ satisfies the condition 
$(\Gamma-\Gamma) \setminus \{0\} \subset \zft{A}$.
\lemref{lemD1.5} asserts  that at least one spectrum $\Gam$  exists for which 
the alternative condition \eqref{eqD1.5}, obtained by replacing
the zero set $\zft{A}$ with the set $H(A)$,  is also satisfied.
We will see in  \secref{secD2} that this alternative condition
can be used in order to deduce that $A$ must be  either a parallelogram or a centrally symmetric hexagon.

\begin{proof}[Proof of \lemref{lemD1.5}]
	First we show that it will be enough to prove the following:
	\begin{claim*}
		Assume that $e$ is an edge of $A$, and that $\Gam$ is a spectrum for $A$. Then there exists a sequence of translates of $\Gam$, which converges weakly to a spectrum  $\Gam'$ of $A$ satisfying the condition 
		\begin{equation}
		\label{eqD1.5.7}\Gam'-\Gam'\subset H(A,e).
		\end{equation}
	\end{claim*}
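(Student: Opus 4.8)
The plan is to exploit the large-frequency asymptotics of $\ft{\1}_A$ in the direction normal to the edge $e$, and to build $\Gam'$ by pushing $\Gam$ off to infinity in that direction and extracting a weak limit. After translating, I assume the centre of the centrally symmetric polygon $A$ is at the origin, so that $\ft{\1}_A$ is real-valued; let $e'$ be the edge parallel to $e$, let $N$ be the unit outer normal to $e$ (so that $-N$ is the outer normal to $e'$), and recall that $\tau_e$ carries $e'$ onto $e$. The guiding principle is that, far out in the direction $N$, the only edges of $A$ contributing to $\ft{\1}_A$ at the leading order are $e$ and $e'$, and the corresponding leading term vanishes precisely on $H(A,e)$.

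To make this precise I would first record the boundary formula obtained from the divergence theorem,
\[
\ft{\1}_A(\xi)=\frac{1}{-2\pi i\,|\xi|^2}\sum_{g}\dotprod{\xi}{N_g}\,\ft{\mu_g}(\xi),
\]
the sum running over the edges $g$ of $A$, where $\mu_g$ is arclength measure on $g$ and $|\ft{\mu_g}(\xi)|=O(|\dotprod{\xi}{g}|^{-1})$. In a sufficiently narrow cone about $N$ every edge $g\neq e,e'$ has $|\dotprod{\xi}{g}|\gtrsim|\xi|$, so that all such edges together contribute only $O(|\xi|^{-2})$; the pair $e,e'$, on the other hand, contributes a term $L(\xi)$ of order $|\xi|^{-1}$. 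A direct computation factors $L$ as a non-vanishing factor times $\sin(\pi\dotprod{\xi}{\tau_e})\cdot\dotprod{\xi}{e}^{-1}\sin(\pi\dotprod{\xi}{e})$, and hence $\zeros(L)=H(A,e)$.

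Next I would fix translations $t_k=\beta_k N$ with $\beta_k\to\infty$ chosen so that $\dotprod{t_k}{\tau_e}\in\Z$ (note that $\dotprod{t_k}{e}=0$ automatically). With this alignment the renormalised shifts $g_k(\xi):=\beta_k\,\ft{\1}_A(\xi+t_k)$ converge, uniformly on compact sets, to a limiting function $g$ with $\zeros(g)=H(A,e)$. Assuming $0\in\Gam$ (permissible since the property of being a spectrum is invariant under translation), every non-zero point of $\Gam$ lies in $\zft{A}$. Passing to a weakly convergent subsequence of the spectra $\Gam-t_k$ yields a spectrum $\Gam'$, and for $p\in\Gam'$, say $p=\lim(\gamma_k-t_k)$ with $\gamma_k\in\Gam$, the vanishing $\ft{\1}_A(\gamma_k)=0$ gives $g_k(\gamma_k-t_k)=0$, so $g(p)=0$ in the limit; thus $\Gam'\subset H(A,e)$.

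The main obstacle is the passage from this point-set inclusion to the difference-set statement $\Gam'-\Gam'\subset H(A,e)$ that the claim demands. Since $H(A,e)$ is a union of lines and is not closed under subtraction, knowing $p,q\in H(A,e)$ is by itself insufficient, and one must bring in both the orthogonality relation $(\Gam'-\Gam')\setminus\{0\}\subset\zft{A}$ and a second application of the limiting argument. Concretely, given $p,q\in\Gam'$ with $q=\lim(\delta_k-t_k)$, the translates $\Gam-\delta_k$ again have all non-zero points in $\zft{A}$, they contain $p-q$ as a limit of $\gamma_k-\delta_k$, and their weak limit sits on a translate of $H(A,e)$ governed by the limiting fractional parts of $\dotprod{\delta_k}{\tau_e}$ and $\dotprod{\delta_k}{e}$. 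Controlling these fractional parts, ruling out the contribution of zeros of $\ft{\1}_A$ lying off $H(A,e)$, and treating the degenerate cases (two points on a common line parallel to $N$, or one point on each of the two families of lines) is exactly where the real work lies; establishing the asymptotic expansion with a uniform remainder is the analytic cost that makes this bookkeeping possible.
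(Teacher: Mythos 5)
Your first phase is essentially sound and close in spirit to the paper's starting point: the divergence-theorem expansion, the identification of the leading term contributed by the pair $e,e'$, and its factorization with zero set $H(A,e)$ are all correct (in the paper's normalization this is precisely the asymptotic $\pi u\,\ft{\1}_A(u,v)=\sin \pi u\cdot \ft{\1}_I(v)+O(|u|^{-1})$, quoted there from \cite[Lemma 6.1]{GL17}). But the pointwise inclusion $\Gam'\subset H(A,e)$ you extract from it is not the claim, and your second phase --- where, as you admit, ``the real work lies'' --- does not go through as sketched, so the proposal proves strictly less than the statement. Concretely: for $p,q\in\Gam'$ with $q=\lim(\delta_k-t_k)$, the differences $\gamma_k-\delta_k$ converge to the \emph{finite} point $p-q$, so the large-frequency asymptotics never apply to them, and orthogonality only returns $p-q\in\zft{A}$, a set that contains points off $H(A,e)$. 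Moreover, in the degenerate case you yourself flag, where $\dotprod{q}{e}\in\Z\setminus\{0\}$ but $\dotprod{q}{\tau_e}\notin\Z$, the fractional parts of $\dotprod{\delta_k}{\tau_e}$ converge to a non-zero value, so your ``translate of $H(A,e)$'' is by a non-integral vector, and membership in it says nothing about membership in $H(A,e)$ itself. Since $H(A,e)$ is not closed under differences, this gap is genuine.

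The paper closes it with one device your setup lacks: a two-rate limit. Inside the index set $S$ of the weakly convergent subsequence $\Gam_k=\Gam-(k,0)$, choose two sequences $k_j',k_j''\to\infty$ with $k_j''-k_j'\to\infty$. For $p=(u',v')$ and $q=(u'',v'')$ in $\Gam'$ this produces preimages $(u_j',v_j')$ and $(u_j'',v_j'')$ in $\Gam$ whose difference satisfies $u_j''-u_j'\to\infty$ while $v_j''-v_j'$ stays bounded; since that difference lies in $\zft{A}$ by orthogonality within $\Gam$ itself (so no normalization $0\in\Gam$ is needed), the asymptotic formula applies \emph{directly to the difference vector} and yields $\sin\pi(u_j''-u_j')\,\ft{\1}_I(v_j''-v_j')\to 0$. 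If $v''-v'\notin\Z\setminus\{0\}$, the second factor is bounded away from zero, whence $\dist(u_j''-u_j',\Z)\to 0$, and the integrality of $k_j''-k_j'$ transfers this to $u''-u'\in\Z$. This disposes of exactly the degenerate configurations your sketch leaves open. If you wish to salvage your renormalization framework, the repair is to apply it to difference vectors within $\Gam$ escaping to infinity along $\tau_e$, rather than to the individual points of the limit set $\Gam'$.
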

	Indeed, if this claim is true, then  \lemref{lemD1.5} can be established as follows.
	We enumerate all the edges of $A$ as 
	$e_1,e_2,\dots,e_N$.
	We let $\Gam_0$ be any spectrum of $A$, and apply the claim with $e=e_1$ and $\Gam=\Gam_0$. The claim yields a spectrum $\Gam_1$ of $A$, satisfying $\Gam_1-\Gam_1\subset H(A,e_1)$. We then apply again the claim 
	with $e=e_{2}$ and $\Gam=\Gam_{1}$, and obtain a spectrum $\Gam_{2}$ for $A$  such that
	$\Gam_{2}-\Gam_{2}\subset H(A,e_{2})$. 
	Moreover, as $\Gam_2$ is a weak limit of translates of  $\Gam_{1}$, the set $\Gam_2-\Gam_2$ is contained in the closure of $\Gam_1-\Gam_1$. Since $\Gam_1-\Gam_1\subset  H(A,e_1)$ and the set $H(A,e_1)$ is closed, we deduce that $\Gam_{2}-\Gam_{2}\subset  H(A,e_1)$ as well. 
	We continue applying the claim with $e=e_3$ and $\Gam=\Gam_2$, and so on.
	At the $k$'th step we  obtain a spectrum $\Gam_k$ for $A$, which satisfies $\Gam_k-\Gam_k\subset H(A,e_j)$ for all $1\leq j\leq k$.  Then the set $\Gam:=\Gam_N $, obtained after $N$ steps, would satisfy \eqref{eqD1.5} as needed.

	It therefore remains to prove the claim.
	To begin, notice that by applying an affine transformation we may assume
	that $A$ is symmetric about the origin, that the points $(\frac1{2},-\frac1{2})$
	and $(\frac1{2},\frac1{2})$ are vertices of $A$, and that the edge $e$ is 
	the line segment which connects these two points.
	These assumptions imply, by \eqref{defD1.3}, that
	\begin{equation}
	\label{eqD1.5.3}
	H(A,e)=(\Z\times\R)\cup(\R\times(\Z\setminus\{0\})).
	\end{equation}
	
	Consider the sequence  $\Gam_k:=\Gam-(k,0)$, $k=1,2,3,\dots$, of translates of $\Gam$. From this sequence we may extract  a weakly convergent subsequence, whose limit $\Gam'$ is also a spectrum for $A$.
	There is therefore an infinite set $S$ of positive integers, such that $\Gam_k\to \Gam'$  as $k\to\infty$,  $k\in S$. We will show that $\Gam'$ satisfies \eqref{eqD1.5.7}.

	To show this, we let  $(u',v')$ and $(u'',v'')$ be two points in $\Gam'$ and we need to verify that $(u''-u',v''-v')$ belongs to the set $H(A,e)$ given in \eqref{eqD1.5.3}. We will assume that  $v''-v'\not\in\Z\setminus\{0\}$ and prove that this implies that  $ u'' -u' \in \Z$. 
	We choose two sequences $k_j'$ and $k''_j$ in $S$, such that each one of them tends  to infinity and  also $k''_j-k'_j\to \infty$. Since $\Gam'$ is the weak limit of the sequence $\Gam_k$, $k\in S$,  there exist two sequences $(u'_j,v'_j)$ and $(u''_j,v''_j)$ in $\Gam$  such that 
	\begin{equation}
	\label{eqD1.5.5}
	(u'_j-k'_j,v'_j)\to (u',v')\quad\text{and}\quad (u''_j-k''_j,v''_j)\to (u'',v'').
	\end{equation} 
	Then, as we have chosen  $k'_j$ and $k''_j$ to satisfy $k''_j-k'_j\to \infty$, we obtain that also $u''_j-u'_j\to \infty$.
	In particular, for all large enough $j$, the points $(u'_j,v'_j)$ and $(u''_j,v''_j)$ are distinct, and since they both belong to $\Gam$, we have
	\begin{equation}
	\label{eqD1.5.2}
	\ft{\1}_A(u''_j-u'_j,v''_j-v'_j )=0.
	\end{equation} 
	
	We will now use the fact that for any fixed $C>0$,
	\begin{equation}
	\pi u  \hat{\1}_A(u,v)= \sin \pi u\cdot
	\ft{\1}_I(v) + O( |u|^{-1} ), \quad
	|u|\to \infty,\quad |v|\leq C,
	\label{eqD1.5.1}
	\end{equation}
	where $I$ denotes the interval $[-\frac{1}{2},\frac{1}{2}]$
	(a proof of this fact can be found in \cite[Lemma 6.1]{GL17}).
	Notice that \eqref{eqD1.5.5} implies that there is $C>0$ such that $|v''_j-v'_j|\leq C$ for all $j$.
	Thus, as we have also seen  that  $u''_j-u'_j\to \infty$, we may use  \eqref{eqD1.5.1},
	which implies by \eqref{eqD1.5.2} that
	\[\sin \pi (u''_j-u'_j)\cdot \ft{\1}_I (v''_j-v'_j) \to 0.\] 
	
	Recall now that we have assumed that $v''-v'\not\in\Z\setminus\{0\}$.  Then, as $\zft{I}=\Z\setminus\{0\}$, we obtain from \eqref{eqD1.5.5} that $|\ft{\1}_I(v''_j-v'_j)|$  remains bounded away from zero as $j\to\infty$.	
	It follows  that 
	$ \sin \pi(u''_j-u'_j)\to 0$, or equivalently, 
	$ \dist(u''_j-u'_j,\Z)\to 0$.
	Notice that as  both $k'_j$ and $k''_j$ are  integers, we have
	\begin{equation}
	\label{eqD1.5.4}
	\dist\left( u''-u',\Z\right)\leq \dist\left(u''_j-u'_j,\Z\right)+|u'_j-k'_j-u'|+|u''_j-k''_j-u''|.
	\end{equation}
	It follows that $u''-u'\in \Z$, since by
	combining  both \eqref{eqD1.5.5} and the fact that $ \dist(u''_j-u'_j,\Z)\to 0$ we obtain that the right hand side of \eqref{eqD1.5.4} tends to zero. This establishes  the claim, and thus completes the proof of \lemref{lemD1.5}.
\end{proof}

\begin{remark}
\label{remD1.5}
The claim in the proof of \lemref{lemD1.5} is actually a special case of a more general result, which can be stated in any dimension, and which was proved in  \cite[Sections 6, 7]{GL17}. 
\end{remark}

\subsection{}
Now we turn to the spectrality problem for the product set 
$\Omega = A \times B$. In this case we know that
every spectrum $\Lam$ of $\Omega$ satisfies the condition 
\begin{equation}
	\label{eqD2.1.2}
	(\Lam-\Lam) \setminus \{0\} \subset \zft{\Omega} = (\zft{A} \times \R^m) \cup (\R^2 \times \zft{B}).
\end{equation}

If we replace in \eqref{eqD2.1.2}  the zero set $\zft{A}$ with the set $H(A)$, then we obtain a new condition:
\begin{equation}
	\label{eqD2.1}
		(\Lam-\Lam) \setminus \{0\}  \subset (H(A) \times \R^m) \cup (\R^2 \times \zeros(\ft{\1}_B)).
\end{equation}
We will show that this condition is satisfied by at least one spectrum $\Lambda$ of $\Omega$:

\begin{lem}
	\label{lemD2.1}
	Let $A$ be a convex, centrally symmetric polygon in $\R^2$, and $B$ be a
	bounded, measurable set in $\R^m$. If 
	$\Omega = A \times B$ is a  spectral set, then there is a spectrum $\Lam$ of $\Omega$ which satisfies condition \eqref{eqD2.1}.
\end{lem}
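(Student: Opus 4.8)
The plan is to follow the strategy of \lemref{lemD1.5}, handling one edge of $A$ at a time and then assembling the information for all edges through a weak-limit iteration. Writing a generic point of $\R^2 \times \R^m$ as $(u,v,y)$ with $u,v \in \R$ and $y \in \R^m$, the key reduction is to the following claim, which is the exact analogue of the claim in \lemref{lemD1.5} but now carries an extra ``escape hatch'' coming from the factor $B$. For an edge $e$ of $A$, set
\[
K(A,e) := (H(A,e) \times \R^m) \cup (\R^2 \times \zft{B}).
\]
\emph{Claim}: if $e$ is an edge of $A$ and $\Lam$ is a spectrum of $\Om$, then some sequence of translates of $\Lam$ converges weakly to a spectrum $\Lam'$ of $\Om$ with $\Lam' - \Lam' \subset K(A,e)$.

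To prove the claim I would, exactly as in \lemref{lemD1.5}, apply a block-diagonal affine map acting only on the $\R^2$-factor, so that $A$ is symmetric about the origin with $(\frac1{2},\pm\frac1{2})$ among its vertices and $e$ the segment joining them; then \eqref{eqD1.5.3} gives $H(A,e) = (\Z \times \R) \cup (\R \times (\Z \setminus \{0\}))$, while $\zft{B}$ is unaffected. Forming $\Lam_k := \Lam - (k,0,0)$ and extracting a weakly convergent subsequence $\Lam_k \to \Lam'$ ($k \in S$), the limit is again a spectrum of $\Om$. Given two points $(u',v',y')$, $(u'',v'',y'')$ of $\Lam'$ with $y'' - y' \notin \zft{B}$, it suffices to show that $v'' - v' \notin \Z \setminus \{0\}$ forces $u'' - u' \in \Z$. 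As in \lemref{lemD1.5} I would choose approximating points $(u'_j,v'_j,y'_j)$, $(u''_j,v''_j,y''_j)$ in $\Lam$, with shift indices $k'_j, k''_j \in S$ satisfying $k''_j - k'_j \to \infty$, so that $u''_j - u'_j \to \infty$ while $v''_j - v'_j$ stays bounded and $y''_j - y'_j \to y'' - y'$. For all large $j$ these two points of $\Lam$ are distinct, hence $\ft{\1}_\Om(u''_j - u'_j, v''_j - v'_j, y''_j - y'_j) = 0$; since $\ft{\1}_\Om = \ft{\1}_A \otimes \ft{\1}_B$, at least one of $\ft{\1}_A(u''_j - u'_j, v''_j - v'_j)$ or $\ft{\1}_B(y''_j - y'_j)$ vanishes.

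The single genuinely new point, and the step I expect to be the main (if modest) obstacle, is the resolution of this dichotomy. If $\ft{\1}_B(y''_j - y'_j) = 0$ held for infinitely many $j$, then $y''_j - y'_j \in \zft{B}$ along that subsequence; as $\zft{B}$ is closed and $y''_j - y'_j \to y'' - y'$, this would give $y'' - y' \in \zft{B}$, contrary to assumption. Hence $\ft{\1}_A(u''_j - u'_j, v''_j - v'_j) = 0$ for all large $j$, and from here the argument is verbatim that of \lemref{lemD1.5}: by \eqref{eqD1.5.1}, and since $v'' - v' \notin \Z \setminus \{0\}$ keeps $\ft{\1}_I(v''_j - v'_j)$ bounded away from zero (here $I=[-\frac1{2},\frac1{2}]$), we get $\sin \pi(u''_j - u'_j) \to 0$, and \eqref{eqD1.5.4} then yields $u'' - u' \in \Z$. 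This proves the claim.

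Finally I would iterate exactly as in \lemref{lemD1.5}: enumerate the edges $e_1, \dots, e_N$ and apply the claim successively, using that each $K(A,e_j)$ is a closed set (both $H(A,e_j) \times \R^m$ and $\R^2 \times \zft{B}$ are closed) so that an inclusion established for an earlier edge survives passage to a further weak limit. The spectrum $\Lam$ obtained after $N$ steps then satisfies $\Lam - \Lam \subset \bigcap_{j} K(A,e_j)$. To read off \eqref{eqD2.1}, take $(w,y) \in (\Lam - \Lam) \setminus \{0\}$: if $y \in \zft{B}$ it already lies in $\R^2 \times \zft{B}$, while if $y \notin \zft{B}$ then $w \in \bigcap_j H(A,e_j)$, and since the spectrum condition $(w,y) \in \zft{\Om}$ together with $y \notin \zft{B}$ forces $w \in \zft{A}$, hence $w \neq 0$ (because $\ft{\1}_A(0) = |A| > 0$), we conclude $w \in \bigcap_j H(A,e_j) \setminus \{0\} = H(A)$, so $(w,y) \in H(A) \times \R^m$. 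This is precisely \eqref{eqD2.1}, completing the proof.
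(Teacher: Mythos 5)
Your proposal is correct and follows essentially the same route as the paper: the same reduction to a one-edge claim, the same translation-and-weak-limit argument using the asymptotics \eqref{eqD1.5.1}, and the same final assembly via $\bigcap_j H(A,e_j) \cap \zft{A} \subset H(A)$ (your observation that $y''-y' \notin \zft{B}$ forces $\ft{\1}_A(u''_j-u'_j,v''_j-v'_j)=0$ for large $j$ is exactly the paper's step, phrased contrapositively via closedness of $\zft{B}$). In fact you supply details the paper only sketches, such as the closedness of the sets $K(A,e_j)$ needed for the iteration and the role of $\ft{\1}_A(0)=|A|>0$ in excluding $w=0$.
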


This lemma will be used in \secref{secD2} in order to prove \thmref{thmA1.1}.

\begin{proof}[Proof of \lemref{lemD2.1}]
The proof is very similar to that of \lemref{lemD1.5}, and so it will only be sketched. 
First we show that the lemma can be reduced to the following claim:
\begin{claim*}
	If $e$ is an edge of $A$, and if $\Lambda$ is a spectrum of $\Omega = A \times B$,
	then there is a sequence of translates of $\Lambda$ which converges weakly to a 
	spectrum $\Lambda'$ of $\Omega$ satisfying
	\begin{equation}
	\label{eqD2.1.1}
	\Lambda' - \Lambda' \subset (H(A,e) \times \R^m) \cup (\R^2 \times \zeros(\ft{\1}_B)).
	\end{equation}
\end{claim*}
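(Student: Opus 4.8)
The plan is to mimic the proof of \lemref{lemD1.5}, replacing the one-dimensional asymptotic analysis of $\ft{\1}_A$ with its product-set counterpart. After an affine transformation we may assume that $A$ is symmetric about the origin, that $(\frac12,-\frac12)$ and $(\frac12,\frac12)$ are vertices of $A$, and that the edge $e$ is the segment joining them, so that $H(A,e)=(\Z\times\R)\cup(\R\times(\Z\setminus\{0\}))$ exactly as in \eqref{eqD1.5.3}. First I would form the sequence of translates $\Lam_k := \Lam - (k,0,0)$ in $\R^2\times\R^m$ (shifting only in the first coordinate, the direction perpendicular to $e$), pass to a weakly convergent subsequence along an infinite index set $S$, and call the limit $\Lam'$; by the weak-limit principle in \subsecref{secLimits}, $\Lam'$ is again a spectrum for $\Omega$. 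The goal is then to verify that any difference $(u''-u',\,v''-v',\,w''-w')$ of two points $(u',v',w'),(u'',v'',w'')$ in $\Lam'$ lies in $H(A,e)\times\R^m$ or in $\R^2\times\zft{B}$, which by the shape of $H(A,e)$ amounts to showing that $u''-u'\in\Z$ whenever both $v''-v'\notin\Z\setminus\{0\}$ and $w''-w'\notin\zft{B}$.

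The core of the argument is the same asymptotic input. I would choose index sequences $k_j',k_j''\in S$ tending to infinity with $k_j''-k_j'\to\infty$, and lift $(u',v',w')$, $(u'',v'',w'')$ to points $(u_j',v_j',w_j')$, $(u_j'',v_j'',w_j'')$ in $\Lam$ converging after the shift; as before this forces $u_j''-u_j'\to\infty$. For large $j$ these are distinct points of $\Lam$, so orthogonality in $L^2(\Omega)$ gives $\ft{\1}_A(u_j''-u_j',\,v_j''-v_j')\cdot\ft{\1}_B(w_j''-w_j')=0$. Here the new feature appears: the product may vanish because the $B$-factor vanishes. So I would split into two cases. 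If $w''-w'\in\zft{B}$ we are already done since then the difference lies in $\R^2\times\zft{B}$. Otherwise, since $w_j''-w_j'\to w''-w'$ and $\zft{B}$ is closed, $\ft{\1}_B(w_j''-w_j')$ is bounded away from zero for large $j$, which forces $\ft{\1}_A(u_j''-u_j',\,v_j''-v_j')=0$; at this point the argument is identical to that of \lemref{lemD1.5}, using the asymptotic \eqref{eqD1.5.1} together with $v_j''-v_j'\to v''-v'\notin\Z\setminus\{0\}$ and $u_j''-u_j'\to\infty$ to conclude $\dist(u_j''-u_j',\Z)\to 0$ and hence $u''-u'\in\Z$.

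Finally, to pass from the claim to the full lemma I would enumerate the edges $e_1,\dots,e_N$ of $A$ and iterate exactly as in \lemref{lemD1.5}: starting from an arbitrary spectrum $\Lam_0$, apply the claim successively to obtain spectra $\Lam_1,\dots,\Lam_N$ with $\Lam_k-\Lam_k\subset(H(A,e_k)\times\R^m)\cup(\R^2\times\zft{B})$. The only subtlety is that the sets on the right of \eqref{eqD2.1.1} are again closed (both $H(A,e)$ and $\zft{B}$ are closed, hence so is their union of products), so the weak-limit inclusion propagates: $\Lam_k-\Lam_k$ lies in the closure of $\Lam_{k-1}-\Lam_{k-1}$ and therefore continues to satisfy all previously established inclusions. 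Intersecting over all $N$ edges and using $H(A)=\bigcap_e H(A,e)\setminus\{0\}$ yields a spectrum $\Lam:=\Lam_N$ satisfying \eqref{eqD2.1}.

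I expect the main obstacle to be purely bookkeeping rather than conceptual: the presence of the $B$-factor means the vanishing of the product Fourier transform does not directly control $\ft{\1}_A$, so one must carefully isolate the case $w''-w'\in\zft{B}$ before the one-dimensional asymptotic \eqref{eqD1.5.1} can be applied. Everything else is a faithful transcription of the proof of \lemref{lemD1.5}, which is why the authors choose to only sketch it.
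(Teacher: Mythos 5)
Your proof of the claim is correct and follows essentially the same route as the paper's: the same affine normalization giving $H(A,e)=(\Z\times\R)\cup(\R\times(\Z\setminus\{0\}))$, the same translates $\Lam-(k,0,\dots,0)$ with a weak limit, and the same case split in which $w''-w'\notin\zft{B}$ (via closedness of $\zft{B}$) forces $\ft{\1}_A(u''_j-u'_j,v''_j-v'_j)=0$ so that the asymptotic \eqref{eqD1.5.1} applies verbatim. The only point you gloss over is outside the claim itself: in the edge-iteration step, $0\in\bigcap_j H(A,e_j)$, so to pass to $H(A)=\bigcap_e H(A,e)\setminus\{0\}$ one must additionally invoke the orthogonality condition \eqref{eqD2.1.2} to rule out differences of the form $(0,w)$ with $w\notin\zft{B}$, as the paper does.
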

If this claim is true then, by iterating through all the edges $e_1, e_2, \ldots ,e_N$ of A,  
we can obtain a spectrum $\Lambda$ for $\Omega$
such that $\Lam - \Lam$ is contained simultaneously in all the sets
$(H(A,e_j) \times \R^m) \cup (\R^2 \times \zeros(\ft{\1}_B))$, $1 \leq j \leq N$.
Moreover,  $\Lam$ satisfies also condition \eqref{eqD2.1.2} which is true for any 
spectrum of $\Omega$. Notice that we have
\[
H(A,e_1) \cap H(A,e_2) \cap \cdots \cap H(A,e_N) \cap \zft{A} \subset H(A),
\]
which is a consequence of \eqref{eqdefD1.4} and the fact that the
zero set $\zft{A}$ does not contain the origin. Combining together all the mentioned properties
yields that $\Lam$ satisfies condition \eqref{eqD2.1} as required.

 We now turn to prove the claim. We may assume that $A$ is symmetric about the origin, and that $e=\{\frac{1}{2}\}\times [-\frac{1}{2},\frac{1}{2}]$. Then we have $H(A,e)=(\Z\times\R)\cup(\R\times(\Z\setminus\{0\})).$
   Let $\Lam'$ be a spectrum of $\Om$, obtained as  a weak limit of some subsequence of \[\Lam_k:=\Lam-(k,0,0,\dots,0), \quad k=1,2,3,\dots.\] That is, $\Lam_k\to\Lam'$ as $k\to\infty$, $k\in S$, where $S$ is a certain infinite set of positive integers.
   We will show that the spectrum $\Lam'$ satisfies \eqref{eqD2.1.1}.

To show this, we assume that  
   \[(u',v',w'),(u'',v'',w'')\in\R\times\R\times\R^m\] are two points in $\Lam'$, such that $v''-v'\not\in\Z\setminus\{0\}$ and also $w''-w'\not\in\zft{B}$. We then need to verify that   $u''-u'\in\Z$. 
   Let   $k'_j, k''_j$ be two sequences in $S$ satisfying $k'_j\to\infty,\; k''_j\to\infty,\; k''_j-k'_j\to \infty$.
   Then there exist two corresponding sequences $(u'_j,v'_j,w'_j)$, 
$(u''_j,v''_j,w''_j)$ in $\Lam$ such that
   \begin{equation}
   \label{eqD2.1.3}
   (u'_j-k'_j,v'_j,w'_j)\to (u',v',w')\quad\text{and}\quad (u''_j-k''_j,v''_j,w''_j)\to (u'',v'',w'').
   \end{equation} 
  From \eqref{eqD2.1.2} we deduce that
    for all large enough $j$, either $(u''_j-u'_j,v''_j-v'_j )\in\zft{A}$  or $w''_j-w'_j\in\zft{B}$. However, combining \eqref{eqD2.1.3} with the  assumption that $w''-w'\not\in\zft{B}$, we obtain  that $w''_j-w'_j\not\in\zft{B}$ for all large enough $j$, and so, we must have  
   $\ft{\1}_A(u''_j-u'_j,v''_j-v'_j )=0$.  
One can now continue in the same way as in the proof of \lemref{lemD1.5}, and establish that $u''-u'\in\Z$, which completes the proof.
\end{proof}


\section{Convex polygons: The notion of a window} \label{secD2}

In this section we obtain the main result of the paper, \thmref{thmA1.1}.
The theorem states that if  $\Omega=A\times B$ is the product of  a convex polygon $A \subset \R^2$,
and  a bounded, measurable set $B \subset \R^m$, then
$\Omega$ is  spectral  if and only if $A$ and $B$ are both spectral sets.
In other words, \conjref{conjA1.1} is true if $A$ is a convex polygon in two dimensions.

The non-trivial part is to prove that the spectrality of $\Omega=A\times B$
implies that both $A$ and $B$ must be spectral. We therefore assume
that $\Omega$ is a spectral set. 
By \thmref{thmC5.4} we know that in this case,  the convex polygon $A$ must be centrally symmetric.

Now suppose we knew that $A$ has an orthogonal packing region $W$,
 $|W| \geq |A|^{-1}$, and that moreover, if $A$ is neither
a parallelogram nor a hexagon,  then $W$ can be chosen such that $|W| > |A|^{-1}$. 
In such a  case, we would be able to use \corref{corD4.3.1} to 
conclude that both $A$ and $B$ must be spectral sets.

However the problem with this strategy is that, unless $A$ is a parallelogram,  no such an 
orthogonal packing region $W$ is known to exist for $A$. In fact we find the existence of
such a $W$ unlikely, although no proof of this claim is known to us either.

To resolve this problem we will introduce a new notion
that we call a ``window'', and which  replaces the notion of an  orthogonal  packing region in our context.
The advantage of this new notion is that we can prove
that if  $A$ is a convex,  centrally symmetric polygon, then it has a window $W$ such that
 $|W| \geq |A|^{-1}$, and if $A$ is neither
a parallelogram nor a hexagon,   then  $|W| > |A|^{-1}$. 

Moreover, we will see that if $\Omega=A\times B$ is a spectral set,
then it has at least one spectrum $\Lam$ which is $W$-compatible
(in the sense of \defref{defC4.1}). This allows us to apply  \thmref{thmC4.3}
in order to conclude that $A$ must be either a parallelogram or a hexagon, and
hence $A$ is a spectral set; and moreover, that the set $B$ must also be spectral.
Thus we will obtain  \thmref{thmA1.1}.

\subsection{}
First of all we need to define, what do we mean by a ``window''.

\begin{definition}
	\label{defD1.1}
	Let $A \subset \R^2$ be a convex, centrally symmetric polygon.
	We say that a bounded, measurable set $W \subset \R^2$ is a \emph{window} for
	$A$ if the condition
\begin{equation}
	\label{defD1.2}
	\Delta (W) \cap H(A) = \emptyset
\end{equation}
is satisfied.
\end{definition}

The set $H(A)$ was defined in \secref{secD1} (see \defref{defD1.4}).
If we replace the set $H(A)$ in condition \eqref{defD1.2} with the zero set
$\zft{A}$, then the condition becomes the definition of an orthogonal packing
region for $A$. Thus our notion of a window differs from that 
of an orthogonal packing region in that the set $H(A)$ replaces the zero set
$\zft{A}$.

Recall that if $A$ is a parallelogram then $H(A) = \zft{A}$.
Hence, for a parallelogram the notion of a window coincides with that of
an orthogonal packing region.

\subsection{}
The following result establishes a key fact concerning the notion of a window.
\begin{thm}
\label{thmD1.7}
Let $A$ be a convex, centrally symmetric polygon in $\R^2$. Then
	\begin{enumerate-math}
		\item \label{thmD1.7.1}
		If $A$ is  a parallelogram or a hexagon, then it has a window $W$,  $|W| = |A|^{-1}$;
		\item \label{thmD1.7.2}
		Otherwise, $A$ admits a window $W$ such that $|W| > |A|^{-1}$.
	\end{enumerate-math}
\end{thm}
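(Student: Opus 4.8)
The plan is to construct the window $W$ explicitly by analyzing the structure of the set $H(A)$, exploiting the fact established in \secref{secD1} that when $A$ is not a parallelogram, $H(A)$ is a discrete closed set contained in a union of finitely many lattices. For each edge $e$ of $A$, recall that $H(A,e)$ is the system of lines $\{\dotprod{t}{\tau_e} \in \Z\} \cup \{\dotprod{t}{e} \in \Z \setminus \{0\}\}$. The condition \eqref{defD1.2} requires that $\Delta(W)$ avoid $H(A) = \bigcap_e H(A,e) \setminus \{0\}$. Since $\Delta(W) \subset W - W$, a sufficient strategy is to choose $W$ so that its difference set $W - W$ meets $H(A)$ only at the origin. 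First I would normalize: by applying an affine transformation (under which both $H(A)$ and the measure-ratio $|W|\cdot|A|$ transform covariantly, as noted after \defref{defD1.4}), reduce to a convenient position for $A$.

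The core of the argument splits according to the classification of centrally symmetric convex polygons by their number of edges. For the parallelogram case, $H(A) = \zft{A}$ has the explicit form \eqref{eqD1.3.1}, and the standard orthogonal packing region (the fundamental parallelepiped of the dual lattice) already serves as a window of the exact measure $|A|^{-1}$; this gives part \ref{thmD1.7.1} for parallelograms immediately. For the hexagon case, I would again produce a window of measure exactly $|A|^{-1}$ — the expectation is that a centrally symmetric hexagon tiles, and its tiling complement or an associated fundamental region yields a set $W$ whose difference set avoids $H(A)\setminus\{0\}$; here the equality $|W| = |A|^{-1}$ reflects the ``critical'' tiling situation. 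The genuinely new content is part \ref{thmD1.7.2}: when $A$ has at least eight edges (equivalently, is neither a parallelogram nor a hexagon), I would show that the discreteness of $H(A)$ leaves enough room to enlarge $W$ strictly beyond measure $|A|^{-1}$ while still keeping $\Delta(W)$ disjoint from the discrete set $H(A)$.

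The main obstacle, and the heart of part \ref{thmD1.7.2}, is the quantitative geometry: I must produce a set $W$ with $|W| > |A|^{-1}$ whose difference set still misses every nonzero point of $H(A)$. The key leverage is that for a non-parallelogram, non-hexagon $A$, the set $H(A)$ is \emph{discrete} rather than a union of lines, so its points near the origin are isolated and one can hope to fit a slightly oversized $W$ into the ``gaps''. Concretely, I would identify the smallest nonzero vectors in $H(A)$ — this is where the explicit lattice description from \cite{Kol00b} enters — and build $W$ (for instance as a suitable polygon or a small perturbation of the hexagonal fundamental domain) so that $W - W$ is contained in the complement of $H(A) \setminus \{0\}$ yet has area exceeding $|A|^{-1}$. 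The delicate point is to verify the strict inequality $|W| > |A|^{-1}$ precisely in the case of more than six edges while showing equality is forced for the hexagon; I expect this to hinge on a direct computation comparing $|A|^{-1}$ against the volume of the largest symmetric region avoiding the nearest $H(A)$-points, and on the observation that adding edges to a centrally symmetric polygon strictly shrinks the obstruction set $H(A)$ near the origin, thereby creating the extra room.
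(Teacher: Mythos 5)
Your normalization step is the same as the paper's, but the heart of the theorem --- actually producing the window and proving the measure dichotomy --- is missing from your plan, and the mechanism you propose for part \ref{thmD1.7.2} is the wrong one. You argue that since $H(A)$ is discrete for a non-parallelogram, one can ``fit a slightly oversized $W$ into the gaps,'' to be verified by locating the minimal nonzero vectors of $H(A)$ and optimizing. This step is never carried out, and it is unclear it could be in this form: discreteness of $H(A)$ by itself gives no lower bound on the size of its gaps relative to the scale $|A|^{-1}$; for a general centrally symmetric $2k$-gon the set $H(A)$ is an intersection of $k$ families of lines depending on all the edge data, so its minimal vectors admit no tractable description, and comparing ``the largest symmetric region avoiding the nearest $H(A)$-points'' to $|A|^{-1}$ is an optimization you have no handle on. Your hexagon case is likewise an expectation rather than a proof: you must exhibit a concrete $W$ with $\Delta(W)\cap H(A)=\emptyset$ and $|W|=|A|^{-1}$, and your ``tiling complement / fundamental region'' candidate is never checked against $H(A)$ (note that for a hexagon $H(A)$ is already a discrete set, not equal to $\zft{A}$, so the parallelogram reasoning does not transfer). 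Finally, the assertion that adding edges ``strictly shrinks the obstruction set $H(A)$ near the origin'' is both unsubstantiated and not what drives the strict inequality.

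For comparison, the paper's proof is uniform across all cases and uses no global information about $H(A)$ at all --- only \emph{two adjacent edges}. After normalizing so that $e_1$ joins $(\tfrac12,-\tfrac12)$ to $(\tfrac12,\tfrac12)$ and $e_2$ joins $(\tfrac12,\tfrac12)$ to a vertex $(a,b)$, it takes the explicit open rectangle $W$ of \eqref{eqD1.7.7}, given by $|u|<\tfrac12$, $|v|<\tfrac12\bigl(b+\tfrac12\bigr)^{-1}$, notes that $\Delta(W)=2W$ by convexity and symmetry, and verifies by direct computation with \eqref{defD1.3} that $\Delta(W)$ already avoids the larger set $H(A,e_1)\cap H(A,e_2)\setminus\{0\}\supset H(A)$, so \eqref{defD1.2} holds. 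The measure dichotomy is then purely geometric: $|W|=\bigl(b+\tfrac12\bigr)^{-1}=|P|^{-1}$, where $P\subset A$ is the hexagon (or unit square) spanned by the endpoints of $e_1$, $e_2$ and their antipodes. If $A$ is a parallelogram or hexagon then $A=P$ and $|W|=|A|^{-1}$; otherwise $P$ is strictly contained in $A$, so $|W|=|P|^{-1}>|A|^{-1}$. Thus the strict inequality comes from the inclusion $P\subsetneq A$, not from any gap analysis of the discrete set $H(A)$ --- this single explicit construction is the idea your proposal lacks, and without it (or a worked-out substitute) the proof does not go through.
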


As we have mentioned, it is not known whether this result remains true
if we replace the word ``window''  with ``orthogonal packing region'' in the statement.
Hence the fact that we can prove \thmref{thmD1.7} constitutes  the
main advantage of the  notion of a  window 
over that of an orthogonal  packing region in the context of convex polygons.

\begin{proof}[Proof of \thmref{thmD1.7}] 
	First observe that if  the assertions \ref{thmD1.7.1} and \ref{thmD1.7.2}  of the theorem are true for a certain convex, centrally symmetric polygon $A$, then these assertions are true also for any polygon $A'$ which is the image of $A$ under an invertible affine transformation. Indeed, we may write $A'=M(A)+x$, where $M$ is a $2 \times 2$ invertible matrix, and $x$ is a vector in $\R^2$.
	Assuming that assertion \ref{thmD1.7.1} or \ref{thmD1.7.2} is true for $A$ with some window $W$, it follows that the corresponding assertion for $A'$ is true with the window $W':=(M^{-1})^\top(W)$.
	
	It will  therefore suffice that we prove \ref{thmD1.7.1} and \ref{thmD1.7.2} under the following additional assumptions: the polygon $A$ is symmetric about the origin, two of its vertices lie at the points $(\frac1{2},-\frac1{2})$ and  $(\frac1{2},\frac1{2})$, and the line segment that connects these two points is an edge of $A$. We will denote this edge by $e_1$.
	We will also denote by $e_2$  the edge of $A$ that shares the vertex $\left(\frac1{2}, \frac1{2}\right)$ with $e_1$, and by $\left( a,b\right) $ the other vertex of $A$ that lies on $e_2$.  See Figure \ref{fig:polygon}.


\begin{figure}[htb]
\centering
\begin{tikzpicture}[scale=0.425]
\fill [fill=gray!35] (5,-5) -- (5,5) -- (3,8) --  (-5,5) -- (-5,-5) -- (-3,-8) -- (5,-5);
\draw  (5,-5) -- (5,5) -- (3,8) -- (-1,9) -- (-4,7) --  (-5,5) -- (-5,-5) -- (-3,-8) -- (1,-9) -- (4, -7) -- (5,-5);

\draw [loosely dashed] (3,8) -- (-5,5);
\draw [loosely dashed] (-3,-8) -- (5,-5);

\fill (5,5) circle (0.1);
\fill (-5,5) circle (0.1);
\fill (5,-5) circle (0.1);
\fill (-5,-5) circle (0.1);
\draw (5,5) node[anchor=west]  {\small $(\tfrac{1}{2},\tfrac{1}{2})$};
\draw (-5,5) node[anchor=east]  {\small $(-\tfrac{1}{2},\tfrac{1}{2})$};
\draw (5,-5) node[anchor=west] {\small $(\tfrac{1}{2},-\tfrac{1}{2})$};
\draw (-5,-5) node[anchor=east]  {\small  $(-\tfrac{1}{2},-\tfrac{1}{2})$};

\fill (3,8) circle (0.1);
\draw (3,8) node[anchor=south west] {\small $(a,b)$};
\fill (-3,-8) circle (0.1);
\draw (-3,-8) node[anchor=north east] {\small $(-a,-b)$};

\draw [decorate,decoration={brace,raise=3pt,amplitude=4pt,aspect=0.6}, gray!90]
	(5,-4.75) -- (5,4.75) node [gray!130,midway,xshift=-0.5cm,yshift=0.4cm] {\footnotesize  $e_1$};
\draw [decorate,decoration={brace,raise=3pt,amplitude=4pt}, gray!90]
	(4.9,5.15) -- (3.1,7.85) node [gray!130,midway,xshift=-0.4cm,yshift=-0.3cm] {\footnotesize $e_2$};

\draw[-stealth]  (-8, 0) -- (8, 0);
\draw[-stealth]  (0, -11) -- (0, 11);

\end{tikzpicture}
\caption{The convex polygon $A$ in the proof of \thmref{thmD1.7}.}
\label{fig:polygon}
\end{figure}


	Notice that if $A$ is a parallelogram, then it follows from the assumptions above that $A$ is in fact the unit cube, $A=[-\tfrac{1}{2},\tfrac{1}{2}]^2$. In this case we have $(a,b)=\left(-\frac1{2}, \frac1{2}\right)$.

	Now, to prove \thmref{thmD1.7}, it is required to show that there is a bounded, measurable set $W$ in $\R^2$ satisfying:
	\begin{enumerate-text}
		\item 
		$W$ is a window for $A$.
		\item
		If $A$ is a parallelogram or a hexagon then $|W|=|A|^{-1}$, and otherwise $|W|>|A|^{-1}$.
	\end{enumerate-text}
	We claim that the rectangle 
	\begin{equation}
	\label{eqD1.7.7}
	W:=\left\lbrace  (u,v)\in \R^2 :\; |u|<\tfrac1{2},\; |v|<\tfrac1{2}\left( b+\tfrac1{2}\right) ^{-1}  \right\rbrace 
	\end{equation}
	satisfies these conditions.
	
	We will first prove that the rectangle $W$ in \eqref{eqD1.7.7} is a window for $A$. To show this, we need to verify  that \eqref{defD1.2} holds. Notice that as $W$   is an open set, we have $\Delta (W)=W-W$, which in turn implies that $\Delta (W)=2W$, since $W$ is convex and  symmetric about the origin. This gives
	\begin{equation}
	\label{eqD1.7.3}
	\Delta (W)=\left\lbrace  (u,v)\in \R^2 :\; |u|<1,\; |v|< \left(b+ \tfrac1{2}\right) ^{-1} \right\rbrace .
	\end{equation}
	It is then required to verify that   $\Delta(W)$  is disjoint from the set $H(A)$.
	In fact, we will prove that
	\begin{equation}
	\label{eqD1.7.1}
	\Delta (W)\cap H(A,e_1)\cap H(A,e_2)\setminus\{0\}=\emptyset .
	\end{equation} 
	According to Definition \ref{defD1.4}, the set $H(A)$ is contained in  $H(A,e_1)\cap H(A,e_2)\setminus\{0\}$. Therefore \eqref{defD1.2} would follow from \eqref{eqD1.7.1}.

	To establish \eqref{eqD1.7.1}, we assume that  $(u,v)$ is a vector lying in $\Delta (W)\cap H(A,e_1)\cap H(A,e_2)$, and we must show that $(u,v)$ is the zero vector.  
	First we will use the fact that $(u,v)\in H(A,e_1)$.
	Notice that we have $\tau_{e_1}= (1,0)$ and   
	$e_1=(0,1)$ (where here $e_1$ is regarded as a vector in $\R^2$). 
	Hence, by the definition \eqref{defD1.3} of $H(A,e_1)$ we have
	\begin{equation}
	\label{eqD1.7.10}
	u\in\Z\;\;\text{or}\;\;v\in\Z\setminus\{0\}.
	\end{equation}
	Moreover, as the vector $(u,v)$  belongs also to the set $\Delta(W)$, it follows from \eqref{eqD1.7.3}  that
	\begin{equation}
	\label{eqD1.7.11}
	|u|<1\;\;\text{and}\;\;|v|<(b+\tfrac{1}{2})^{-1}\leq 1,
	\end{equation}
	where the last inequality is due to the fact that $b\geq\frac1{2}$, by the convexity of $A$.
	By combining \eqref{eqD1.7.10} and \eqref{eqD1.7.11}, we obtain that $u=0$.
	
	Next we use the fact  that  the vector $(u,v)$  lies in $H(A,e_2)$ as well.
	Notice that $ \tau_{e_2}=\left( a+\frac1{2},b+\frac1{2}\right)$ and 
	$e_2=\left( a-\frac1{2},b-\frac1{2}\right)$ (where, as before, $e_2$ is regarded here as a vector). Hence, since we have seen that $u=0$, we have
	\[  v(b+\tfrac1{2})=\dotprod{(u,v)}{\left( a+\tfrac1{2},b+\tfrac1{2}\right)} \in \Z, \]
	or
	\[ v(b-\tfrac1{2})=\dotprod{(u,v)}{\left( a-\tfrac1{2},b-\tfrac1{2}\right)} \in \Z \setminus \{0\}.\]
	Therefore, if $v\not =0$, then in each one of these cases we obtain that $|v|\geq (b+\tfrac1{2})^{-1}$, which contradicts  \eqref{eqD1.7.11}. We conclude  that   the set $\Delta(W) \cap H(A,e_1)\cap H(A,e_2)$ does not contain any vector other than the zero vector. This establishes \eqref{eqD1.7.1}, which, as we have seen, implies that  $W$ is a window for $A$.
	
	For our claim to be proved, it remains to show that the rectangle $W$ in \eqref{eqD1.7.7}  satisfies $|W|= |A|^{-1}$ if $A$ is a parallelogram or a hexagon, and $|W|>|A|^{-1}$ otherwise.
	To begin, notice that the definition  of $W$ implies that
	\begin{equation}
	\label{eqD1.7.2}
	|W|=\left( b+\tfrac1{2}\right) ^{-1}.
	\end{equation}
	
	Let $P$ denote  the convex hull of the points 
	\begin{equation}
	\label{eqD1.7.4}
	(\tfrac1{2},-\tfrac1{2}),(\tfrac1{2},\tfrac1{2}),(a,b),(-\tfrac1{2},\tfrac1{2}),(-\tfrac1{2},-\tfrac1{2}),(-a,-b)
	\end{equation} 
	(see  the shaded region in Figure \ref{fig:polygon}).
	Observe that in the case where the  polygon $A$ is a parallelogram then $P$ is the unit cube, $P=[-\tfrac1{2},\tfrac1{2}]^2$, and otherwise $P$ is a hexagon of measure $ b+\frac1{2} $. 
	In any case,  it follows from \eqref{eqD1.7.2}  that
	\begin{equation}
	\label{eqD1.7.8}
	|W|=|P|^{-1}.
	\end{equation}
	
	Furthermore, as $A$ is convex and all the points in \eqref{eqD1.7.4} are vertices of $A$, the polygon $P$ is contained in $A$. We conclude that if $A$ is  a parallelogram or a hexagon, then it coincides with $P$ and so $|P|= |A|$; whereas otherwise, $P$ is strictly contained in $A$, and $|P|<|A|$. Combining the latter conclusion with \eqref{eqD1.7.8} completes the proof of our claim, as it shows that if $A$ is a parallelogram or a hexagon then 
	$|W|= |A|^{-1}$, and otherwise $|W|>|A|^{-1}$, as required.
\end{proof}

\subsection{}
It should be noted that the condition $|W| = |A|^{-1}$
in part \ref{thmD1.7.1} of \thmref{thmD1.7}
is sharp in the sense that the window $W$
cannot be chosen to have   measure strictly greater than $|A|^{-1}$. 
This is a consequence of the following lemma:

\begin{lem}
	\label{lemD1.6}
	Let $A$ be  a convex, centrally symmetric polygon in $\R^2$. If $A$ is a
	spectral set, then any window $W$ of $A$  satisfies $|W| \leq |A|^{-1}$.
\end{lem}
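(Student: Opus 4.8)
The plan is to combine the special spectrum furnished by \lemref{lemD1.5} with the packing-versus-tiling mass comparison of \lemref{lemC1.6}. First I would apply \lemref{lemD1.5}: since $A$ is a convex, centrally symmetric spectral polygon, it admits a spectrum $\Gam$ for which
\[
	(\Gam-\Gam)\setminus\{0\}\subset H(A).
\]
The essential point is that this is a spectrum whose difference set lies inside $H(A)$, rather than merely inside the full zero set $\zft{A}$; this is exactly the feature that the window condition is designed to exploit.

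Next I would bring in the window hypothesis. By \defref{defD1.1} we have $\Delta(W)\cap H(A)=\emptyset$, that is $H(A)\subset\Delta(W)^\complement$. Combined with the containment of the previous step, this gives $(\Gam-\Gam)\setminus\{0\}\subset\Delta(W)^\complement$, so \lemref{lemC2.3} shows that $W+\Gam$ is a packing. On the other hand, writing $f_A:=\sqft{A}$, part \ref{lemC1.7.1} of \lemref{lemC1.7} gives that $f_A+\Gam$ is a tiling, because $\Gam$ is a spectrum for $A$. I would then apply part \ref{lemC1.6.1} of \lemref{lemC1.6} to the tiling $f_A+\Gam$ and the packing $\1_W+\Gam$ (both functions are nonnegative and integrable, and both are taken with respect to the \emph{same} set $\Gam$), obtaining
\[
	|W|=\int\1_W\le\int f_A=|A|^{-2}\int|\ft{\1}_A|^2=|A|^{-1},
\]
where the final equality is Plancherel's identity. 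This is precisely the claimed inequality.

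I do not anticipate any computational obstacle, since every ingredient is already in place. The single conceptual point that must be handled with care — and the reason the lemma holds at all — is the middle passage: one must invoke the \emph{particular} $H(A)$-compatible spectrum provided by \lemref{lemD1.5}, not an arbitrary spectrum of $A$. The window $W$ is built to avoid $H(A)$, not the full zero set $\zft{A}$, so it is only for this special spectrum that $W+\Gam$ is guaranteed to be a packing; with an arbitrary spectrum the packing conclusion, and hence the mass comparison, would fail.
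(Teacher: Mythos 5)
Your proof is correct and follows essentially the same route as the paper's own argument: both use the $H(A)$-compatible spectrum from \lemref{lemD1.5}, the window condition together with \lemref{lemC2.3} to get that $\1_W+\Gam$ is a packing, the tiling of $\sqft{A}$ from \lemref{lemC1.7}, and finally part (i) of \lemref{lemC1.6} with Plancherel to conclude $|W|\le|A|^{-1}$. Your closing remark correctly identifies the key point, namely that the special spectrum of \lemref{lemD1.5}, not an arbitrary one, is what makes the packing step work.
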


This result is analogous to \cite[Lemma 2.3]{LRW00}, where it was shown that
if $A$ is a spectral set and if $W$ is an orthogonal packing region for $A$,
then $|W| \leq |A|^{-1}$.

\begin{proof}[Proof of \lemref{lemD1.6}]
	Suppose that $W$ is a window for $A$, which means that \eqref{defD1.2} holds.
	Since $A$ is spectral,  \lemref{lemD1.5} allows us to choose  a spectrum $\Gam$ for $A$ which satisfies \eqref{eqD1.5}. 
	Thus, from   \eqref{eqD1.5} and \eqref{defD1.2}, it follows that
	$(\Gam-\Gam)\setminus\{0\}\subset\Delta(W)^\complement.$
	Observe that according to \lemref{lemC2.3}, this implies  that $W+\Gam$ is a packing, or equivalently, $\1_W+\Gam$ is a packing.
	On the other hand, part \ref{lemC1.7.1} of \lemref{lemC1.7} implies that if  $f:=\sqft{A}$, then $f+\Gam$ is a tiling.
	Now, as $f+\Gam$ is a tiling and $\1_W+\Gam$ is a packing, we may apply  part \ref{lemC1.6.1} of \lemref{lemC1.6} and deduce that
	\[|W|=\int \1_W\leq \int f=|A|^{-1},\]
	which completes the proof.
\end{proof}

One can think of  \lemref{lemD1.6} as imposing a necessary condition  
for the spectrality of a convex, centrally symmetric polygon $A \subset \R^2$. Namely, $A$ cannot
be spectral if it has a window $W$ such that $|W| > |A|^{-1}$. 
Using this proposition together with part \ref{thmD1.7.2} of \thmref{thmD1.7}
yields a new proof of the result from \cite{IKT03} 
which characterizes the spectral convex polygons in two dimensions:

\begin{corollary}[\cite{IKT03}]
\label{corD1.8}
Let  $A$ be a convex polygon  in $\R^2$. Then
 $A$ is a spectral set if and only if $A$ is either a parallelogram or a  centrally symmetric hexagon. 
\end{corollary}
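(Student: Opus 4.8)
The plan is to treat the two implications separately, since the substantive content lies entirely in the ingredients already assembled in this section. For the ``if'' direction I would invoke the classical fact, recalled in the introduction (see \cite[Section 3.5]{Kol04}), that a convex body which tiles the plane by translations is a spectral set. A parallelogram tiles $\R^2$ by a lattice, and so does every centrally symmetric convex hexagon; hence each such $A$ is spectral, with no further work required.

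For the ``only if'' direction I would argue by contradiction. Suppose $A$ is a spectral convex polygon. By \thmref{thmC6.1} its spectrality forces central symmetry, so that $A$ falls within the scope of the results proved above and, in particular, one may speak of windows for $A$. Now assume, toward a contradiction, that $A$ is neither a parallelogram nor a centrally symmetric hexagon. Part \ref{thmD1.7.2} of \thmref{thmD1.7} then supplies a window $W$ for $A$ with $|W| > |A|^{-1}$. On the other hand, \lemref{lemD1.6} guarantees that, since $A$ is spectral, \emph{every} window of $A$ has measure at most $|A|^{-1}$. The strict inequality $|W| > |A|^{-1}$ thus contradicts the bound $|W| \le |A|^{-1}$, and we conclude that $A$ must be a parallelogram or a centrally symmetric hexagon.

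The whole deduction is short once \thmref{thmD1.7} and \lemref{lemD1.6} are available, so no genuine obstacle remains at this stage---the real work was carried out in establishing those two results, namely the construction of an oversized window for every centrally symmetric polygon outside the parallelogram/hexagon family, and the measure bound forced by spectrality. It is precisely the tension between these two competing bounds on $|W|$ that excludes all other convex polygons, and the only point deserving a word of care is that the central-symmetry hypothesis required even to define a window is automatically available, which is exactly what \thmref{thmC6.1} furnishes.
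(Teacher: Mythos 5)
Your proposal is correct and follows essentially the same route as the paper's own proof: the ``if'' direction via tiling, and the ``only if'' direction by combining \thmref{thmC6.1} (central symmetry), part \ref{thmD1.7.2} of \thmref{thmD1.7} (an oversized window when $A$ is neither a parallelogram nor a hexagon), and \lemref{lemD1.6} (the bound $|W| \leq |A|^{-1}$ for spectral $A$) to reach a contradiction. Nothing is missing, and your closing observation that \thmref{thmC6.1} supplies the central symmetry needed even to invoke the window machinery is exactly the right point of care.
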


\begin{proof}
We know that  parallelograms and centrally symmetric hexagons are spectral
sets (as these are  convex polygons that   tile by translations). Conversely, suppose that $A$ is a
spectral convex polygon. According to \thmref{thmC6.1}, $A$ must be centrally symmetric.
If $A$ is neither a parallelogram nor a  hexagon, then by
part \ref{thmD1.7.2} of \thmref{thmD1.7} there is 
a window $W$ for $A$, $|W| > |A|^{-1}$.
However this contradicts \lemref{lemD1.6}.
\end{proof}

\subsection{}
We can now complete the proof of our main result.

\begin{proof}[Proof of \thmref{thmA1.1}] 
We assume that $\Omega=A\times B$ is the product of a
convex polygon $A \subset \R^2$, and a bounded, measurable set  $B \subset \R^m$. 
We already know that the spectrality of both 
$A$ and $B$ implies the spectrality of $\Omega$. It remains therefore to prove the converse assertion,
 namely, if $\Omega$ is spectral then both
$A$ and $B$ must be spectral sets.

So suppose that $\Omega$ is spectral. Then according to \thmref{thmC5.4}, the convex polygon
$A$ must be centrally symmetric. By
\thmref{thmD1.7} we can find a window $W$ for $A$, such that  $|W| = |A|^{-1}$
if $A$ is either a parallelogram or a hexagon, and $|W| > |A|^{-1}$ otherwise.

Using \lemref{lemD2.1} we can find  a spectrum $\Lam$ for $\Omega$
which satisfies condition \eqref{eqD2.1}. As $W$ is a window for $A$,
it satisfies \eqref{defD1.2}. Combining these two conditions implies that
\[
	(\Lam-\Lam) \setminus \{0\} \subset (\Delta(W)^\complement \times \R^m) \cup (\R^2 \times \zeros(\ft{\1}_B)).
\]
That is, the spectrum $\Lam$ is $W$-compatible in the sense of \defref{defC4.1}.

We may therefore invoke \thmref{thmC4.3} in our present situation. It follows
from part \ref{thmC4.3.2} of this theorem that the set $B$ must be spectral,
so the spectrality of  $B$ is established.
To conclude that  $A$ must also be spectral, we can use
 part \ref{thmC4.3.1} of  \thmref{thmC4.3}, which yields that
$|W| = |A|^{-1}$. This is not possible unless $A$ is either
a parallelogram or a hexagon. In particular this implies
the spectrality of $A$, as we had to show.
\end{proof}

\subsection{}
Based on  \thmref{thmA1.1}  and the results obtained in \cite{GL16, GL17}
we can now deduce \corref{corA1.3}, which states that
  spectrality and tiling are equivalent properties
for decomposable convex polytopes in four dimensions.

\begin{proof}[Proof of \corref{corA1.3}] 
We assume that $\Omega$ is a convex polytope in $\R^4$, and that $\Omega$ is decomposable.
The decomposability assumption means that $\Omega$ 
can be mapped by an invertible affine transformation to a cartesian product $A \times B$ of two
convex polytopes $A \subset \R^n$,  $B \subset \R^m$  $(n,m \geq 1)$ where $n+m=4$.
By the invariance under affine transformations, it would
be  enough to consider the case when $\Omega = A \times B$. 

 We need to prove that $\Omega$ is spectral if and only if it can tile by translations.
It is already known that the convex polytopes which tile by translations
are spectral, and what has to be proved is that $\Om$ can be spectral
only if it tiles.

We therefore assume that $\Om$ is spectral.
We may suppose that $n \leq m$, which leaves two possibilities, $n=1$ and $m=3$, or $n=m=2$.

If  $n=1$ and $m=3$, then $A$ is an interval in $\R$, and $B$ is a convex polytope in $\R^3$
(so in this case, $\Omega$ is a prism with base $B$).  Using  \thmref{thmA1.0} we obtain
that $B$ is a spectral set.  Hence $B$ is a spectral
three-dimensional convex polytope, and so we know from \cite[Theorem~1.2]{GL17} that 
$B$ can tile $\R^3$ by translations.
Since the interval $A$ can obviously tile $\R$, it follows
that the product $\Omega = A \times B$  tiles $\R^4$ by translations, as we had to show.

Next we consider the remaining case, when  $n=m=2$.
In this case, both  $A$ and $B$ are convex polygons in $\R^2$,  hence we may apply
\thmref{thmA1.1}. It follows from the proof of this theorem 
that $A$ must be either a parallelogram or a  centrally symmetric hexagon,
and since the roles played by $A$ and $B$ are symmetric, the same
is true also for $B$. Hence each one of the sets $A, B$ can tile $\R^2$
by translations, which again implies that their product $\Omega$ tiles $\R^4$ by translations.
This concludes the proof.
\end{proof}


\section{Remarks} \label{secF1}

\subsection{}
It is a natural problem to extend Theorems \ref{thmA1.0}   and \ref{thmA1.1}  to higher dimensions.
\begin{problem}
	\label{probJ1.1}
 Let $A$ be a convex  polytope in $\R^n$, and $B$ be a bounded, measurable set in $\R^m$.
 Prove that their product $\Omega = A \times B$ is spectral if and only if $A$ and $B$ are both spectral sets.
\end{problem}

Theorems \ref{thmA1.0}   and \ref{thmA1.1} say that this is true for dimensions $n=1,2$.
For any dimension $n$, we know from \thmref{thmC5.4} that the spectrality of the product $\Omega = A \times B$ implies
that $A$  must be centrally symmetric and have centrally symmetric facets.

One may attempt to solve  \probref{probJ1.1} for dimensions $n \geq 3$
by adapting the approach   used in this paper for $n=2$. Such a
solution should involve two main steps:

(i) The set $H(A)$ should be defined in an appropriate way,
such that the corresponding versions of Lemmas  \ref{lemD1.5} 
and \ref{lemD2.1} would be true.

(ii)  A result analogous to \thmref{thmD1.7} should be proved, stating
that $A$ has  a window $W$ of measure $|W| \geq |A|^{-1}$, and 
moreover if $A$ is not spectral then
$W$ can be chosen such that $|W| > |A|^{-1}$. Here a ``window''  is again
defined by \eqref{defD1.2} but   with respect to the definition of $H(A)$ made in the previous step.

Once these two steps are accomplished,  a  proof of the assertion
in  \probref{probJ1.1}  can be completed using \thmref{thmC4.3}, in the same
way as we have done above for $n=2$.

\subsection{}
In dimension $n=3$, we are able to perform the first step in the above scheme.
That is, we can define the set $H(A)$ in a natural way, and then
prove the corresponding versions of Lemmas  \ref{lemD1.5} and \ref{lemD2.1}.
In what follows, we explain the definition of the set $H(A)$ in the three-dimensional setting.

Let $A \subset \R^3$ be a convex polytope, which is
 centrally symmetric and has centrally symmetric facets.  
We will assume that $A$ is not a prism. (If $A$ is a prism,
 then $A$ is decomposable, so in this case
\probref{probJ1.1} can be solved using  Theorems \ref{thmA1.0}   and \ref{thmA1.1}.)

Let $F$ be one of the facets of $A$. Then by the central symmetry of both $A$ and $F$,
the opposite facet $F'$ is a translate of $F$, hence there is a
translation vector $\tau_F$ which carries $F'$ onto $F$.
Further, if $e$ is an edge of $A$ which is contained in $F$, then  the central symmetry 
of $F$ implies that there is another edge $e'$ of $F$, which is parallel to $e$ and
has the same length. Let $\tau_{F,e}$ be the translation vector which carries $e'$ onto $e$.
Denote
\begin{equation}
	\label{eqdefJ3.1}
	H(A, F, e) := \{t \in \R^3 : \dotprod{t}{\tau_F} \in \Z
	\; \; \text{or} \; \;  \dotprod{t}{\tau_{F,e}} \in \Z
	\; \; \text{or} \; \;  \dotprod{t}{e} \in \Z \setminus \{0\} \}.
\end{equation}
Finally, we define
\begin{equation}
	\label{eqdefJ3.2}
H(A) := \bigcap_{(F,e)}  H(A, F, e) \setminus \{0\},
\end{equation}
where the intersection is taken over all the pairs $(F,e)$ such that
$F$ is a facet of $A$, and $e$ is an edge of $A$ which is contained in $F$.

(It turns out that the same set was used  also in the paper \cite{GKRS13}, where the structure of multi-tilings of $\R^3$
by translates of a convex polytope was studied.)

In \cite[Sections 6, 7, 12]{GL17} the following claim was
proved:  if $\Gamma$ is a spectrum for $A$, and
if $(F,e)$ is a pair as above, then there exists a sequence 
of translates of $\Gamma$ which convergences weakly 
to a spectrum $\Gamma'$ of $A$ that satisfies the condition
$\Gamma' - \Gamma' \subset H(A,F,e)$.
By iterating this process over all the pairs $(F,e)$ we can obtain:

\begin{lem}
	\label{lemJ4.1}
	Let $A \subset \R^3$ be a convex polytope, centrally symmetric
	and with centrally symmetric facets.  If
	$A$ is a spectral set, then it has a spectrum $\Gamma$ satisfying
\begin{equation}
	\label{eqJ4.2}
	(\Gamma - \Gamma) \setminus \{0\} \subset H(A).
\end{equation}
\end{lem}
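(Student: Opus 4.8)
The plan is to treat the single-pair statement recalled just above this lemma --- namely, that for each admissible pair $(F,e)$ and each spectrum $\Gam$ of $A$ there is a sequence of translates of $\Gam$ converging weakly to a spectrum $\Gam'$ of $A$ with $\Gam'-\Gam'\subset H(A,F,e)$ --- as a black box, and to assemble the lemma from it by iterating over all pairs $(F,e)$, exactly as \lemref{lemD1.5} was deduced from its one-edge claim in the planar case. Since $A$ has finitely many facets and each facet finitely many edges, there are only finitely many admissible pairs, and I would enumerate them as $(F_1,e_1),\dots,(F_N,e_N)$.

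First I would record the topological fact that each set $H(A,F,e)$ is closed. This is immediate from \eqref{eqdefJ3.1}: the three sets $\{t:\dotprod{t}{\tau_F}\in\Z\}$, $\{t:\dotprod{t}{\tau_{F,e}}\in\Z\}$ and $\{t:\dotprod{t}{e}\in\Z\setminus\{0\}\}$ are preimages of the closed subsets $\Z$, $\Z$ and $\Z\setminus\{0\}$ of $\R$ under continuous linear functionals (here one uses that $0$ is isolated from $\Z\setminus\{0\}$, so the latter is indeed closed), and a finite union of closed sets is closed.

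Then comes the inductive construction. Starting from an arbitrary spectrum $\Gam_0$ of $A$, at the $k$-th step I would apply the single-pair claim with the pair $(F_k,e_k)$ and the current spectrum $\Gam_{k-1}$, obtaining a spectrum $\Gam_k$ which is a weak limit of translates of $\Gam_{k-1}$ and satisfies $\Gam_k-\Gam_k\subset H(A,F_k,e_k)$. The crux of the bookkeeping is that the inclusions secured at earlier steps survive: because $\Gam_k$ is a weak limit of translates of $\Gam_{k-1}$, the difference set $\Gam_k-\Gam_k$ lies in the closure of $\Gam_{k-1}-\Gam_{k-1}$; and since each $H(A,F_j,e_j)$ with $j<k$ is closed and already contains $\Gam_{k-1}-\Gam_{k-1}$, it also contains $\Gam_k-\Gam_k$. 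By induction $\Gam_k-\Gam_k\subset\bigcap_{j\le k}H(A,F_j,e_j)$ for every $k$, and each $\Gam_k$ is a genuine spectrum of $A$, since a weak limit of spectra is again a spectrum and all spectra of $A$ share the uniform separation constant $\geq\chi(A)$.

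Taking $\Gam:=\Gam_N$, the difference set is then contained in the intersection over all admissible pairs, so $(\Gam-\Gam)\setminus\{0\}\subset\bigcap_{(F,e)}H(A,F,e)\setminus\{0\}=H(A)$ by \eqref{eqdefJ3.2}, which is exactly \eqref{eqJ4.2}. I do not expect any genuine obstacle in this assembly: the entire analytic weight rests in the single-pair claim, whose proof relies on the Fourier asymptotics of $\ft{\1}_A$ for a three-dimensional polytope and is carried out in \cite[Sections 6, 7, 12]{GL17}. The only points demanding care are the closedness of $H(A,F,e)$ and the persistence of the earlier inclusions under the successive weak limits, both of which are routine.
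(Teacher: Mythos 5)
Your proof is correct and takes essentially the same route as the paper: the paper also treats the single-pair claim from \cite[Sections 6, 7, 12]{GL17} as a black box and obtains \lemref{lemJ4.1} ``by iterating this process over all the pairs $(F,e)$,'' with the iteration mechanism being exactly the one used to deduce \lemref{lemD1.5} from its one-edge claim. The details you supply---the closedness of each $H(A,F,e)$ and the persistence of earlier inclusions under successive weak limits---are precisely the bookkeeping carried out in the planar proof, so nothing is missing.
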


This is the analog in dimension $n=3$ of \lemref{lemD1.5}.
In a similar way, we can also prove the corresponding version
of \lemref{lemD2.1}, namely:

\begin{lem}
	\label{lemJ4.3}
	Let $A$ be a convex polytope in $\R^3$, centrally symmetric
	and with centrally symmetric facets, and let $B$ be a
	bounded, measurable set in $\R^m$. If 
	$\Omega = A \times B$ is a  spectral set, 
	then there is a spectrum $\Lam$ of $\Omega$ such that
\begin{equation}
	\label{eqJ4.4}
		(\Lam-\Lam) \setminus \{0\}  \subset (H(A) \times \R^m) \cup (\R^3 \times \zeros(\ft{\1}_B)).
\end{equation}
\end{lem}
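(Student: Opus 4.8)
The plan is to follow the proof of \lemref{lemD2.1} step by step, replacing the two-dimensional asymptotic input by its three-dimensional counterpart from \cite[Sections 6, 7]{GL17}. As in the planar case, I would first reduce the lemma to a \emph{per-pair claim}: for every pair $(F,e)$ as in \eqref{eqdefJ3.1} and every spectrum $\Lam$ of $\Omega = A\times B$, there is a sequence of translates of $\Lam$ converging weakly to a spectrum $\Lam'$ of $\Omega$ with
\[
\Lam' - \Lam' \subset (H(A,F,e)\times\R^m)\cup(\R^3\times\zeros(\ft{\1}_B)).
\]
Granting this claim, I would enumerate all the pairs $(F_1,e_1),\dots,(F_N,e_N)$ and iterate exactly as in \lemref{lemD1.5} and \lemref{lemD2.1}: applying the claim successively and using that each $H(A,F,e)$ is closed (so that the inclusion survives when one passes to a further weak limit of translates), I obtain a single spectrum $\Lam$ whose difference set lies simultaneously in all of the sets $(H(A,F_j,e_j)\times\R^m)\cup(\R^3\times\zeros(\ft{\1}_B))$. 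Combining this with the universal orthogonality condition $(\Lam-\Lam)\setminus\{0\}\subset(\zft{A}\times\R^m)\cup(\R^3\times\zft{B})$, valid for any spectrum of $\Omega$, together with the set-theoretic inclusion
\[
\Big(\bigcap_{(F,e)} H(A,F,e)\Big)\cap\zft{A}\subset H(A),
\]
which follows from \eqref{eqdefJ3.2} and the fact that $0\notin\zft{A}$, then yields \eqref{eqJ4.4}.

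The core of the matter is the per-pair claim, and here I would reuse the asymptotic analysis of $\ft{\1}_A$ developed in \cite[Sections 6, 7]{GL17}, inserting the $B$-factor precisely as in the proof of \lemref{lemD2.1}. Fix a pair $(F,e)$ and, after an affine normalization, form a sequence of translates of $\Lam$ in the $A$-factor, $\Lam_k := \Lam - (c_k,0)$ with $c_k\in\R^3$ tending to infinity in the direction prescribed by that procedure and $0\in\R^m$; extract a weakly convergent subsequence with limit $\Lam'$, which is again a spectrum for $\Omega$. To verify the inclusion for $\Lam'$, I would take two points $(u',w'),(u'',w'')\in\Lam'$ with $w''-w'\notin\zft{B}$ and approximate them by points $(u'_j,w'_j),(u''_j,w''_j)\in\Lam$, which are distinct for all large $j$. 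As in \lemref{lemD2.1}, the universal condition forces, for each large $j$, \emph{either} the $B$-difference into $\zft{B}$ \emph{or} the $A$-difference into $\zft{A}$; since $w''-w'\notin\zft{B}$ the first alternative is excluded for large $j$, so $\ft{\1}_A(u''_j-u'_j)=0$. At this point the $B$-component has done its job and drops out, and the remaining task is purely about the $A$-factor: I would feed the vanishing relations $\ft{\1}_A(u''_j-u'_j)=0$ into the three-dimensional asymptotic machinery of \cite[Sections 6, 7]{GL17} to conclude that the limiting difference $u''-u'$ lies in $H(A,F,e)$.

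The main obstacle is concentrated entirely in this last step, that is, in the three-dimensional asymptotics of $\ft{\1}_A$. In two dimensions a single expansion, formula \eqref{eqD1.5.1}, extracted the factor $\sin\pi u\cdot\ft{\1}_I(v)$ and thereby forced the $A$-difference into $H(A,e)$, which had only the two alternatives recorded in \eqref{eqD1.5.3}. In three dimensions the set $H(A,F,e)$ in \eqref{eqdefJ3.1} carries \emph{three} linear conditions, attached to $\tau_F$, $\tau_{F,e}$ and $e$, and capturing all of them requires a more delicate pushing argument that reflects the geometry of the facet $F$ and of the edge $e\subset F$; this is exactly the content of \cite[Sections 6, 7, 12]{GL17}, where the analogous per-pair claim was established for a spectrum of $A$ alone, yielding \lemref{lemJ4.1}. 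Since the insertion of the $B$-factor is handled by the elementary dichotomy above and does not interact with the asymptotic estimates, no genuinely new difficulty arises in passing from \lemref{lemJ4.1} to \lemref{lemJ4.3}: the argument is entirely parallel to that of \lemref{lemD2.1}, with the planar input replaced by its spatial analog.
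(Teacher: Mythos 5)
Your proposal follows essentially the same route as the paper, which obtains \lemref{lemJ4.3} precisely by running the argument of \lemref{lemD2.1} verbatim: a per-pair claim for each $(F,e)$ supplied by the three-dimensional asymptotic machinery of \cite[Sections 6, 7, 12]{GL17}, iterated over all pairs via weak limits of translates (using that the sets involved are closed), and finally intersected with the universal condition $(\Lam-\Lam)\setminus\{0\}\subset\zft{\Omega}$ together with the inclusion $\bigcap_{(F,e)}H(A,F,e)\cap\zft{A}\subset H(A)$. Your key observation --- that the $B$-factor enters only through the dichotomy forcing $\ft{\1}_A$ to vanish at the $A$-differences once $w''-w'\notin\zft{B}$, so the spatial asymptotics apply unchanged --- is exactly the paper's point when it says the proof is obtained ``in a similar way'' to \lemref{lemD2.1}.
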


Actually, If some of the facets of $A$ happen to be quadrilateral, then the conclusions in
the last two lemmas can be somewhat improved, in the following sense:
 for each pair  $(F,e)$  such that $F$  is a quadrilateral facet of $A$,
 we can redefine $H(A,F,e)$ to be a set smaller than the one in \eqref{eqdefJ3.1},
but such that Lemmas \ref{lemJ4.1} and \ref{lemJ4.3} will  remain true. The new
definition of the set $H(A,F,e)$ in the case when $F$  is quadrilateral 
is obtained from  \eqref{eqdefJ3.1} by replacing the condition
$\dotprod{t}{\tau_{F,e}} \in \Z$ with the stronger one
$\dotprod{t}{\tau_{F,e}} \in \Z \setminus \{0\}$.

We conjecture that if $A \subset \R^3$ is a convex polytope, 
 centrally symmetric and with centrally symmetric facets, but $A$ is not a prism,
and if the set $H(A)$  is defined as above, then a corresponding version
of \thmref{thmD1.7} should be true. That is, if $A$ can tile the space
 by translations then it has a window $W$,  $|W| = |A|^{-1}$;
and otherwise, $A$ admits a window $W$ such that $|W| > |A|^{-1}$.
Such a result would imply a solution to \probref{probJ1.1} for
dimension $n=3$.  Moreover, it would provide an alternative approach to the
main result in \cite{GL17} which states that if a convex polytope in $\R^3$ is
spectral, then it can tile by translations. This will be the subject of a future work.


\end{document}